\newtheoremstyle{mine}
{\baselineskip}
{\baselineskip}
{\itshape}
{
}
{\bfseries}
{.}
{.5em}
{#1 #2\ifx#3\relax\else~(#3)\fi}
\theoremstyle{mine}
\newtheorem{thm}{Theorem}[section]
\newtheorem{prop}[thm]{Proposition}
\newtheorem{lem}[thm]{Lemma}
\theoremstyle{remark}
\newtheorem{rem}{Remark}
\colorlet{shadecolor}{blue!10}
\renewcommand{\bar}{\widebar}
\renewcommand{\epsilon}{\varepsilon}
\def\tM{\mathbf{M}}
\def\M{{\mathcal M}}
\def\W{{\mathcal W}}
\def\D{{\mathcal D}}
\def\R{{\mathbb R}}
\def\P{{\bf P}}
\def\E{{\bf E}}
\def\Q{{\bf Q}}
\def\N{{\mathbb N}}
\def\T{{\mathbb T}}
\def\L{{\mathcal L}}
\def\C{{\mathcal C}}
\def\en{\mathcal{E}}
\def\d{\, \mathrm{d}}
\def\cb{{\bf c}}
\def\nf{\mathcal{F}}
\def\mS{{\underline{S}}}
\def\MS{{\overline{S}}}
\newtheorem{condition}{Assumption}
\newcommand{\ind}[1]{\mathbf{1}_{\left\{ #1 \right\}}}
\renewcommand{\epsilon}{\varepsilon}
\numberwithin{equation}{section}
\title[Conditioned BRW]
{\bf Branching random walk conditioned on large martingale limit
}
 \author{Xinxin Chen, Lo\"ic de Raph\'elis, and Heng Ma}
\address[Xinxin Chen]
{Beijing Normal University, School of Mathematical Sciences, China   
}
\email{xinxin.chen(at)bnu(dot)edu(dot)cn}
\address[Lo\"ic de Raph\'elis]
{Institut Denis Poisson-C.N.R.S. UMR 7013-Universit\'e d'Orl\'eans, France
}
\email{loic.de.raphelis(at)univ-orleans(dot)fr}
\address[Heng Ma]{Heng Ma: School of Mathematical Sciences, Peking University,  P.R. China. }
\email{hengmamath(at)gmail(dot)com} \urladdr{\url{hengmamath.github.io}}
\begin{document}

\begin{abstract}
We consider a branching random walk in the non-boundary case where 
the additive martingale  $W_n$ converges a.s. and in mean to some non-degenerate limit $W_\infty$. We first establish the joint tail distribution of $W_\infty$ and the global minimum of this branching random walk. Next, conditioned on the event that the minimum is atypically small or conditioned on very large $W_\infty$, we study the branching random walk viewed from the minimum and obtain the convergence in law  in the vague sense. As a byproduct, we also get the right tail of the limit of derivative martingale. 
\end{abstract}

 \keywords{Branching random walk, additive martingale, derivative martingale, locally finite point process.} 
 
\maketitle

\section{Introduction: Models and Results}
\label{Intro}
\subsection{Branching random walk}
Let us first introduce a branching random walk on the real line, whose reproduction law is given by
a point process $\L$ on $\R$.
 The construction is as follows. 
We start with one individual at time $0$, which is called the root, denoted by $\rho$ and positioned at $V(\rho)=0$. At time $1$, the root gives birth to some children whose positions constitute a point process distributed as $\L$. These children form the first generation. Recursively, for any $n\geq0$, at time $n+1$, every vertex $u$ of the $n$-th generation produces its children independently of the other vertices so that the displacements of its children with respect to its position are distributed as $\L$. All children of the vertices of the $n$-th generation form the $(n+1)$-th generation.  
We hence get the genealogical tree $\T$ which is a Galton-Watson tree. For any vertex $u\in\T$, let $V(u)$ denote its position and $|u|$ denote its generation with $|\rho|=0$. Denote by $\P$ the law of the branching random walk $(\T, (V(u))_{u\in\T})$. For any $a\in\R$, the law of $(\T, (V(u)+a)_{u\in\T})$ is denoted by $\P_a$. Let $\E$ and $\E_a$ be the corresponding expectations, respectively.

An important tool in the analysis of branching random walk is  the log-Laplace transform of $\L$ defined by
\[
\psi(\theta):= \ln \E\left[\sum_{|u|=1}e^{-\theta V(u)}\right]= \ln \E\left[\int e^{-\theta x}\L(dx)\right]\in(-\infty,+\infty], \forall \theta \in\R.
\]
Note that $\psi(0)>0$ is equivalent to say that the Galton-Watson tree $\T$ is supercritical. In this case, the survival event $\{\T=\infty\}$ is of positive probability. We only work in this case throughout this paper. For every $\theta\in \R$ such that $\psi(\theta)<\infty$, define
\[
W_n(\theta):=\sum_{|u|=n}e^{-\theta V(u)-n\psi(\theta)}, \forall n\geq0.
\]
Obviously, $(W_n(\theta))_{n\geq0}$ is a non-negative $\P$-martingale with respect to the filtration of sigma-fields $\{\nf_n:=\sigma((u, V(u)); |u|\leq n)\}_{n\geq0}$, thus converges a.s. to some non-negative limit $W_\infty(\theta)$.  We usually call it the additive martingale or Biggins' martingale in honor of Biggins' seminal contribution on it (see for instance \cite{Biggins77}). When the Galton-Watson tree $\T$ is supercritical, the limit is positive a.s. on $\{\T=\infty\}$ if and only if $\{W_n(\theta)\}$ is uniformly integrable, according to \cite{Biggins77} and \cite{Lyons97}. 

Suppose that $D_\psi:=\{\theta: \psi(\theta)<\infty\}$ contains an open set. By taking derivative with respect to $\theta\in D^o_\psi$, one also gets that 
\[
D_n(\theta)=-\sum_{|u|=n}(V(u)+n\psi'(\theta))e^{-\theta V(u)-n\psi(\theta)},\ \  n\in\N,
\]
forms a $\P$-martingale with respect to the natural filtration  $\{\nf_n\}_{n\geq0}$. This is known as the derivative martingale.

 In this work, we only consider the parameter $\theta=1$ and study the martingales $W_n=W_n(1)$ and $D_n=D_n(1)$. The model is said to be in the boundary case in the sense of \cite{BK05} if $\psi(1)=\psi'(1)=0$. We work in the non-boundary case.
More precisely, through out this paper, we make the following assumptions.
\begin{condition}\label{cond1}
$\psi(0)>0$
and
\begin{align}\label{hyp1}
\E\left[\sum_{|u|=1}e^{-V(u)}\right]=& \, 1, \\
  \E\left[\sum_{|u|=1}|V(u)|e^{-V(u)}\right]<\infty \text{ and }&  \E\left[\sum_{|u|=1}V(u)e^{-V(u)}\right] > 0.
\end{align}
\end{condition}

\begin{condition}\label{cond2}
There exists 
\begin{equation*}
  \kappa:= \inf\{\theta>1: \psi(\theta)=0\}\in (1,\infty).
\end{equation*} 
\end{condition} 
\begin{condition}\label{cond3}
The support of $\L$ is non-lattice. 
\end{condition}
\begin{condition}\label{cond4}
There exists some  $\delta_0  >0 $ such that 

\begin{equation}\label{hyp1+}
\sup_{ \theta\in [1-\delta_0,\kappa+\delta_0]}\psi(\theta)<\infty,\textrm{ and } \E\left[\left(\sum_{|u|=1}(1+|V(u)|)e^{-V(u)}\right)^{\kappa+\delta_0}\right]<\infty.
\end{equation}
\end{condition}

 The convexity of $\psi$ implies that $\psi(\kappa)=0$ in Assumption \ref{cond2}. Assumptions \ref{cond1} and \ref{cond4} show that $\psi(1)=0>\psi'(1)$. So,
\[
W_n=\sum_{|u|=n}e^{-V(u)}, \textrm{ and } D_n = \sum_{|u|=n}(-V(u)-\psi'(1)n)e^{-V(u)}.
\]

It is well-known that under assumption \ref{cond1}, and mild moment conditions, the additive martingale $W_n$ converges in $L^1$ to some non-degenerate limit $W_\infty$ (see, e.g., \cite{Lyons97}. \cite{Lyons97}).
In Proposition \ref{Dcvg} we will show that 
under the assumptions \ref{cond1}, \ref{cond2}, \ref{cond3} and \ref{cond4},  the martingales $\{W_n\}$ and $\{D_n\}$ are bounded in $L^p$ for any $p \in (1,\kappa)$.   Consequently,   there exists a non-degenerate limit $D_\infty$ satisfying
\begin{equation*}
 \lim_{n \to \infty} D_n  =  D_{\infty}  \text{ a.s. and in $L^{p}$  for any $p \in (1,\kappa)$.}
\end{equation*} 
 Our  interest lies in the tail behavior of this martingale limit $D_\infty$, especially the right tail which is closely related to the right tail of $W_\infty$.  In fact, it is known from Theorem 2.2 in \cite{Liu00} that under Assumptions \ref{cond1}-\ref{cond4}, there exists a constant $C_0\in(0,\infty)$ such that 
\begin{equation}\label{tailW}
\P(W_\infty\geq x)=(C_0+o_x(1)) x^{-\kappa}, \textrm{ as } x\rightarrow\infty.
\end{equation}
We will also study the behavior of the branching random walk conditioned on large $W_\infty$.

The martingale limits gain many interests in the study of branching random walks. For the additive martingales, Biggins \cite{Biggins77} has determined the necessary and sufficient condition for the non-degeneracy of $W_\infty$. Lyons gave in \cite{Lyons97} a shorter proof of Biggins' result by using the so-called Lyons' change of measure which will be used in this work as well. The additive martingale also appears in the study of Mandelbrot's cascades and has been studied by Kahane and Peyri\`ere \cite{KP76}. Liu \cite{Liu00} investigated the moments, exponential moments and tail probabilities of the additive martingale limit. In particular, the tail \eqref{tailW} can be viewed as an application of Kesten's theorem on random difference equation, see \cite{Kest73}, \cite{Kevei16}. The derivative martingale, in the boundary case where $\psi'(1)=\psi(1)=0$ and the additive martingale vanishes, plays an important role in the study of the minimal position and the extremal point process, see \cite{ABK13}, \cite{ABBS13} etc. Further, in the boundary case, the Cauchy tail of derivative martingale is proved by Buraczewski, Iksanov and Mallein\cite{BIM21}. Recently, in the context of Gaussian multiplicative chaos, in \cite{LRV22}, the non-boundary derivative martingale, and the tail probabilities of its limit, are raised. For branching Wiener process, the left tail of non-boundary derivative martingale limit is partially answered in \cite{BV23}. We will give the right tail in this work.

To study $W_\infty$ and $D_\infty$, let us introduce the global minimum of the branching random walk which is defined by
\[
\tM:=\inf_{u\in\T}V(u) \leq  0
\]
In fact, it is known in \cite{Big76} that Assumption \ref{cond1} implies that on $\{\T=\infty\}$, a.s.,
\begin{equation}\label{Infgrow}
\inf_{|u|=n}V(u)\to +\infty,\textrm{ as } n\to\infty.
\end{equation}
This means that $\tM$ is $[0,\infty)$-valued, $\P$-a.s. in our setting. We first show that on $\{W_\infty\ge x\}$ with large $x\gg1$, the global minimum $\tM$ is around $-\ln x$ with high probability. So the law conditioned on $\{W_\infty \ge x\}$ is linked with the law conditioned on $\{\tM\le -\ln x\}$.

\begin{thm}\label{OptimalStrategy}
 Suppose that the assumptions \ref{cond1}, \ref{cond2}  and \ref{cond4} are  fulfilled; and $\delta_0$ is given in \eqref{hyp1+}.  
There exists some constant $C_*$ such that for any $x \geq 1$ and $z \geq 0$, 
  \begin{equation*} 
    \P( W_{\infty}> x, |\tM + \ln x |> z  ) \leq C_* e^{- (\kappa \wedge \delta_0) z} x^{-\kappa} .
  \end{equation*}
\end{thm}

Let us take $u^*\in\T$ such that $V(u^*)=\tM$, if there exist several choices, one chooses $u^*$ at random among the youngest ones. Then $|u^*|$ is the first generation where $\tM$ is achieved. In addition, let 
\[
\W^{\tM}:= W_\infty e^{\tM} \ \textrm{ and }\ \D^\tM:=e^\tM D_\infty.
\]
Here, recall that $\psi$ is a convex and smooth function on $(1-\delta_0, \kappa+\delta_0)$. So, 
\[
\psi'(1)<0<\psi'(\kappa), \ \psi''(\kappa)>0.
\]
Our first result is about the following convergence in law conditioned on very negative $\tM$. 
\begin{thm}\label{BRWcvg}
Suppose that the assumptions \ref{cond1},\ref{cond2}, \ref{cond3} and \ref{cond4} are all fulfilled. Then there exists a constant $c_\tM\in(0,\infty)$ such that as $z\rightarrow\infty$, 
\begin{equation}\label{tailM}
\P(\tM\leq -z)\sim c_\tM e^{-\kappa z}.
\end{equation}
Further, conditionally on $\{\tM\leq-z\}$, the following convergence in law holds as $z\rightarrow\infty$: 
\begin{equation*} 
\left(\sum_{u\in\T}\delta_{V(u)-\tM},  \frac{\sqrt{\psi'(\kappa)}}{\sqrt{z}}(|u^*|-\frac{z}{\psi'(\kappa)}),\frac{\D^\tM}{|u^*|}, \mathcal{W}^{\tM}, \tM+z \right)\Rightarrow (\en_\infty, G,(\psi'(\kappa)-\psi'(1))Z , Z, -U)   ,
\end{equation*}
where $U$ is an exponential random variable with mean $\kappa^{-1}$, $G$ is a Gaussian random variable with mean zero and variance $\frac{\psi''(\kappa)}{\psi'(\kappa)^2}$, $Z$ is positive random variable, $\en_\infty$ is a locally finite point process on $\mathbb{R}_+$. Moreover, $U$, $G$ and $(Z,\en_\infty)$ are independent. And the convergence in law of point processes holds in the vague sense.
\end{thm}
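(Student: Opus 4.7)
The plan is to use Lyons' change of measure associated to the positive mean-one martingale $W_n(\kappa):=\sum_{|u|=n}e^{-\kappa V(u)}$, available under Assumption~\ref{cond2} since $\psi(\kappa)=0$. Under the resulting measure $\Q$, there is a distinguished spine $(w_n)_{n\ge 0}$ along which $(V(w_n))_{n\ge 0}$ is a random walk with i.i.d.\ increments of law $e^{-\kappa x}\E[\L(\mathrm{d}x)]$---mean $-\psi'(\kappa)<0$, variance $\psi''(\kappa)$---while the non-spine children of each $w_k$ root independent copies of the original BRW. The negative drift of the spine is the mechanism driving the minimum to atypically low levels, and it underlies every step below.

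\emph{Tail of $\tM$ and overshoot.} Consider the stopping line $L_z:=\{u\in\T: V(u)\le -z,\ V(v)>-z\ \forall v\prec u\}$, so that $\{\tM\le -z\}=\{L_z\ne\emptyset\}$. The stopping-line many-to-one formula yields
\[
\E[|L_z|]=\E_\Q\bigl[e^{\kappa V(w_{\tau_z})}\bigr]=e^{-\kappa z}\,\E_\Q\bigl[e^{-\kappa O_z^\Q}\bigr],
\]
where $\tau_z:=\inf\{n: V(w_n)\le -z\}$ and $O_z^\Q:=-z-V(w_{\tau_z})\ge 0$. The non-lattice renewal theorem (Assumption~\ref{cond3}) gives $O_z^\Q\Rightarrow O_\infty^\Q$, hence $\E[|L_z|]\sim Ce^{-\kappa z}$. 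The upper bound $\P(\tM\le -z)\le\E[|L_z|]$ is immediate; the matching lower bound and exact constant come from applying an implicit renewal argument (Goldie's theorem on the branching recursion $z\mapsto\P(\tM\le -z)$, as in~\cite{Liu00} for $W_\infty$). The overshoot $\tM+z\Rightarrow -U$ with $U\sim\mathrm{Exp}(\kappa)$ is then immediate from $\P(\tM\le -z-o)/\P(\tM\le -z)\to e^{-\kappa o}$.

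\emph{Joint convergence.} With $\P(\cdot\mid\tM\le -z)$-probability tending to one, $u^*=w_{\tau_z}$ (competing subtrees cannot reach below $-z$, by the just-established tail bound applied subtree-by-subtree). Three scales then decouple. (i) $|u^*|=\tau_z$ is the first-passage time of a random walk with drift $-\psi'(\kappa)$ and variance $\psi''(\kappa)$ to $-z$; the classical Gaussian CLT gives $\sqrt{\psi'(\kappa)/z}\,(\tau_z-z/\psi'(\kappa))\Rightarrow G$ with variance $\psi''(\kappa)/\psi'(\kappa)^2$. (ii) The overshoot produces $\tM+z\Rightarrow -U$ as above. (iii) Shifting positions by $-V(w_{\tau_z})=-\tM$ and letting $\tau_z\to\infty$, the spine becomes doubly infinite in the shifted picture and $\sum_u\delta_{V(u)-\tM}$ converges vaguely to a locally finite "infinite-spine" point process $\en_\infty$. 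The positive variable $Z$ is identified with $\lim\W^\tM$ through the decomposition
\[
\W^\tM=W_\infty^{(u^*)}+\sum_{k=0}^{\tau_z-1}\sum_{v\ \text{non-spine child of }w_k}e^{V(w_{\tau_z})-V(v)}\,W_\infty^{(v)},
\]
where the weights decay exponentially at rate $\psi'(\kappa)$ in $\tau_z-k$, producing a summable nondegenerate limit. The parallel expansion for $D_\infty e^\tM$ picks up an extra factor $\approx(k+1)(\psi'(\kappa)-\psi'(1))$ per term (from $-V(u)-\psi'(1)m\approx(k+1)(\psi'(\kappa)-\psi'(1))$ on the dominant contributions); dividing by $\tau_z\sim z/\psi'(\kappa)$ collapses this to $\D^\tM/|u^*|\to(\psi'(\kappa)-\psi'(1))Z$.

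\emph{Main obstacle.} The hard part is the joint convergence together with the asymptotic independence of $G$, $U$, and $(Z,\en_\infty)$: these live respectively on the bulk CLT scale over $\sim z$ spine steps, on the local overshoot at $\tau_z$, and on the tree structure in a bounded position window around $u^*$. Making the decoupling rigorous requires uniform integrability of $\W^\tM$ and $\D^\tM/|u^*|$ under $\P(\cdot\mid\tM\le -z)$---provided by the $L^p$ bounds of Proposition~\ref{Dcvg} for $p\in(1,\kappa)$ together with Assumption~\ref{cond4}---and a quantitative control $\P(u^*\ne w_{\tau_z}\mid\tM\le -z)\to 0$, which rests on the exponential tail of the minimum applied subtree-by-subtree and on the $(\kappa+\delta_0)$-moment assumption~\ref{cond4}.
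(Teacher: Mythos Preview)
Your heuristics are sound—the $\kappa$-spine, the first-passage CLT for $G$, the renewal overshoot for $U$, and the identification $\D^\tM/|u^*|\to(\psi'(\kappa)-\psi'(1))Z$ are all the right pictures—but there is a structural gap. The assertion ``$u^*=w_{\tau_z}$ with $\P(\cdot\mid\tM\le-z)$-probability tending to one'' is ill-posed as written: the spine $(w_n)$ exists only under $\Q^{\kappa,*}$, and under $\Q^{\kappa,*}$ the spine drifts to $-\infty$, so $\tM=-\infty$ a.s.\ and $u^*$ is undefined there. Making your stopping-line picture rigorous would require a hybrid measure such as $\Q^{\kappa,*}_{\vert\tau_z}\otimes\P$ together with a nontrivial comparison to $\P(\cdot\mid\tM\le-z)$. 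The Goldie invocation is also off: $e^{-\tM}$ satisfies a max-type branching recursion $e^{-\tM}=\max\bigl(1,\max_{|u|=1}e^{-V(u)}e^{-\tM^{(u)}}\bigr)$, not an affine one, so Kesten--Goldie does not apply directly. Finally, the uniform integrability you need cannot come from the unconditional $L^p$ bounds of Proposition~\ref{Dcvg} with $p<\kappa$: on $\{\tM\le-z\}$ one has $\E[(\W^\tM)^p\mid\tM\le-z]\lesssim e^{-pz}\E[W_\infty^p]\,e^{\kappa z}\to\infty$, so a separate conditional $(\kappa+\delta)$-moment bound (Lemma~\ref{bdcondmomWD}) is required.

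The paper avoids first-passage entirely. It writes $\E[F;\,\tM\in I(x)]=\sum_n\E[F;\,|u^*|=n,\,\tM\in I(x)]$, changes measure to $\Q^{\kappa,*}_{\vert n}\otimes\P$ for each \emph{fixed} $n$ (so $w_n$ is the candidate minimiser and all off-spine subtrees evolve under $\P$, keeping $\tM$ finite), and then performs the key step you are missing: a \emph{time-reversal} of the spine, $(V(w_j))_{0\le j\le n}\mapsto(V(w_n)-V(w_{n-j}))_{0\le j\le n}$. After reversal the minimum sits at the new root, so the local data $(\en_\infty,Z)$ and the truncated $\W^{u^*,\le t}$ depend only on the first $O(1)$ reversed steps and stabilise as $n\to\infty$; the remaining sum over $n\in J(x)$ is handled by a renewal/local-CLT lemma (Lemma~\ref{Rfunctioncvg}) for $\sum_n\phi_0(\cdot)\,\E_{-a}[e^{\kappa S^{(\kappa)}_n+\kappa x};\,\overline S^{(\kappa)}_{[1,n]}<0,\,S^{(\kappa)}_n\in I(x)]$, which simultaneously yields the constant $c_\tM$, the Gaussian $G$, and its independence from $(Z,\en_\infty)$. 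The exponential $U$ then drops out by summing $\tM\in I(x+\ell)$ over $\ell\ge0$. This reversal is what turns your heuristic decoupling into an actual factorisation of the limiting expectation.
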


\begin{rem}
The fact that $Z>0$, $\P$-a.s. follows from Lemma \ref{BRWtailM}. Moreover, we could deduce from Lemma \ref{bdcondmomWD} that $\E[Z^\kappa]=C_0/c_\tM$ with $C_0$ in \eqref{tailW}. 
\end{rem}

\begin{rem}
Note that in the boundary case where $\psi(1)=\psi'(1)=0$, the additive martingale vanishes and the derivative martingale $D_n$ converges a.s. to some positive limit $D_\infty$ on $\{\T=\infty\}$ under some suitable moment condition (See for instance \cite{Aid13}). The global minimum $\tM$ is still a well-defined random variable in this case and \cite{Mad20} obtained the convergence in law of $e^{-x}D_\infty$ conditioned on $\tM\le -x$ and deduced the right tail of $D_\infty$ under suitable moment conditions. 
\end{rem}

Next, we deduce the right tail of $D_\infty$ with the help of $\tM$. Let 
\begin{equation}
\gamma(a) :=c_\tM \E \left[(1\wedge \frac{Z}{a})^\kappa \right],\quad \forall a>0. 
\end{equation}
Note that $\gamma: (0,\infty)\to (0,\infty)$ is non-increasing and continuous. Moreover, $\gamma(0+)=c_\tM$ and $\lim_{a\to0+}\gamma(\frac1a)a^{-\kappa}=c_\tM \E[Z^\kappa]= C_0$ with $C_0$ in \eqref{tailW}.

As a consequence of Theorem \ref{BRWcvg}, we get the following tail probabilities.
\begin{thm}\label{thmtailWM} Under the assumptions \ref{cond1}, \ref{cond2}, \ref{cond3} and \ref{cond4}, for any $a\ge 0$,
\begin{equation}
\P\left(W_\infty \geq ax, e^{-\tM }\geq x\right)
\sim \gamma(a)x^{-\kappa},\textrm{ as }x\rightarrow\infty.
\end{equation}
Moreover, there exists some constant $c_D:=C_0 [\frac{\psi'(\kappa)-\psi'(1)}{\psi'(\kappa)}]^\kappa$ such that
\begin{equation}\label{tailD}
\P(D_\infty \ge x)\sim c_D \frac{(\ln x)^{\kappa}}{x^\kappa},  \textrm{ as }x\rightarrow\infty.
\end{equation} 
\end{thm}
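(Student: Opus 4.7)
\emph{Plan.} The strategy is to derive both assertions from Theorem~\ref{BRWcvg} via changes of variables and a careful limit analysis.

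For the first identity, set $z := \ln x$ and $\tilde U_z := -\tM - z$. Since $\mathcal{M}_e = e^{-\tM}$ and $W_\infty = \mathcal{M}_e\,\W^{\tM}$,
\[
\{W_\infty \ge ax,\ \mathcal{M}_e \ge x\} \;=\; \{\tM \le -z\} \cap \{\W^{\tM} \ge a e^{-\tilde U_z}\}.
\]
Factor out $\P(\tM \le -z) \sim c_\tM x^{-\kappa}$ and invoke the joint convergence $(\W^{\tM}, \tilde U_z) \Rightarrow (Z,U)$ of Theorem~\ref{BRWcvg}, where $Z$ and $U \sim \mathrm{Exp}(\kappa)$ are independent. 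Since $\P(Z = a e^{-U}) = 0$, the indicator $\mathbf{1}_{\{w \ge a e^{-u}\}}$ is continuous at $(Z,U)$ almost surely, and the Portmanteau theorem gives convergence of the conditional probability to $\P(Z \ge a e^{-U})$. Conditioning on $Z$ and using $\P(U \ge t) = e^{-\kappa \max(t,0)}$ yields $\P(Z \ge a e^{-U} \mid Z) = \min(1,(Z/a)^\kappa)$, so this limit equals $\E[(1 \wedge Z/a)^\kappa] = \gamma(a)/c_\tM$, proving the first identity.

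For the tail of $D_\infty$, I would exploit $D_\infty = \mathcal{M}_e \cdot \D^{\tM}$ together with the asymptotics $|u^*|/(-\tM) \to 1/\psi'(\kappa)$ in probability (the Gaussian correction $G$ contributes only $O(\sqrt{-\tM})$) and $\D^{\tM}/|u^*| \Rightarrow (\psi'(\kappa)-\psi'(1))\,Z$ from Theorem~\ref{BRWcvg}. These combine to $\ln D_\infty - ((-\tM) + \ln(-\tM)) \Rightarrow U + \ln(\beta Z)$ with $\beta := (\psi'(\kappa)-\psi'(1))/\psi'(\kappa)$, so that $\{D_\infty \ge y\}$ concentrates around $-\tM \approx \ln y - \ln \ln y$. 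I parametrize by $A \in \R$ via the large root $z_A(y)$ of $z_A(y) + \ln z_A(y) = \ln y - A$, which satisfies $z_A(y) \sim \ln y - \ln \ln y - A$ and $e^{-\kappa z_A(y)} \sim e^{\kappa A}(\ln y)^\kappa/y^\kappa$. Theorem~\ref{BRWcvg} then yields $\P(D_\infty \ge y \mid \tM \le -z_A(y)) \to \P(U + \ln(\beta Z) \ge A)$, and conditioning on $Z$ gives the explicit identity
\[
c_\tM\,e^{\kappa A}\,\P(U + \ln(\beta Z) \ge A) \;=\; c_\tM\,\E\!\left[(\beta Z)^\kappa \wedge e^{\kappa A}\right],
\]
which tends to $c_\tM\,\beta^\kappa\,\E[Z^\kappa] = C_0 \beta^\kappa = c_D$ by monotone convergence and $\E[Z^\kappa] = C_0/c_\tM$. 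Letting $A = A_y \to \infty$ sufficiently slowly assembles these ingredients into $\P(D_\infty \ge y,\ \tM \le -z_{A_y}(y)) \sim c_D (\ln y)^\kappa/y^\kappa$.

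The hard part will be showing that the complementary piece $\P(D_\infty \ge y,\ \tM > -z_{A_y}(y))$ is $o((\ln y)^\kappa/y^\kappa)$. On this event $\D^{\tM} \ge y/e^{z_{A_y}(y)} \asymp e^{A_y}\ln y$, and a Markov bound using the $L^p$ moments of $\D^{\tM}$ for $p < \kappa$ (available from Proposition~\ref{Dcvg}) is not quite sharp enough once $A_y$ grows. I expect to need finer conditional moment estimates of $\D^{\tM}$, e.g.\ the ones provided by the auxiliary lemma alluded to in the Remark after Theorem~\ref{BRWcvg}. The balancing act is to pick $A_y \to \infty$ fast enough that the truncation $(\beta Z)^\kappa \wedge e^{\kappa A_y}$ captures $(\beta Z)^\kappa$ tightly and the complementary piece is negligible, yet slow enough that the weak-convergence error in the conditional probability is not amplified by the factor $e^{\kappa A_y}$.
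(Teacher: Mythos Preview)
Your argument for the first assertion is correct and is essentially the paper's proof.

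For the tail of $D_\infty$, the overall strategy is right, but the parametrization is needlessly complicated and the tool you propose for the complementary piece will not work. The paper's route is cleaner on both counts. Rather than solving $z_A(y) + \ln z_A(y) = \ln y - A$, set $y = x\ln x$ and threshold $\tM$ at $-\ln(ax)$ for a \emph{fixed} $a > 0$. Theorem~\ref{BRWcvg} (applied at level $z=\ln(ax)$, so that $ze^z\sim ax\ln x$) gives
\[
\lim_{x\to\infty} x^\kappa\,\P\!\bigl(D_\infty \ge x\ln x,\ \tM \le -\ln(ax)\bigr)
= c_\tM\,a^{-\kappa}\,\P\bigl(e^{U}\beta Z\ge \tfrac1a\bigr)
= c_\tM\,\E\!\Bigl[\bigl(\tfrac1a \wedge \beta Z\bigr)^\kappa\Bigr],
\]
and letting $a\downarrow 0$ recovers $c_\tM\,\beta^\kappa\,\E[Z^\kappa]=c_D$ by monotone convergence. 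Keeping $a$ fixed before sending $x\to\infty$ eliminates your worry about weak-convergence errors being amplified by $e^{\kappa A_y}$; it is a plain $\limsup/\liminf$ sandwich.

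The gap you identify --- controlling $\P(D_\infty \ge x\ln x,\ \tM > -\ln(ax))$ --- cannot be closed with $L^p$ bounds for $p<\kappa$ from Proposition~\ref{Dcvg}: those yield at best $(x\ln x)^{-p}$, which is weaker than $x^{-\kappa}$. The right tool is a \emph{restricted} moment at exponent $\kappa+\epsilon_0>\kappa$. The paper proves (Lemma~\ref{lem-highmoments-martlim}, inequality~\eqref{eq-desired-bound-D}) that
\[
\E\bigl[\,|D_\infty|^{\kappa+\epsilon_0}\,\ind{\tM \ge -t}\,\bigr] \;\lesssim\; e^{\epsilon_0 t}\,(1+t)^{\kappa+\epsilon_0},
\]
and Markov's inequality with $t=\ln(ax)$ then gives
\[
\P\!\bigl(D_\infty \ge x\ln x,\ \tM > -\ln(ax)\bigr)
\;\lesssim\; \frac{(ax)^{\epsilon_0}(1+\ln(ax))^{\kappa+\epsilon_0}}{(x\ln x)^{\kappa+\epsilon_0}}
\;\asymp\; a^{\epsilon_0}\,x^{-\kappa},
\]
which vanishes as $a\downarrow 0$ and closes the sandwich. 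The point is that although $D_\infty\notin L^{\kappa+\epsilon_0}$ globally, it is in $L^{\kappa+\epsilon_0}$ on each sublevel set $\{\tM\ge -t\}$ with an explicit growth rate; this is the input you were missing.
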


Furthermore, we analyze the  behavior of these quantities  conditioned on $\{W_\infty \ge x\}$. In particular, we have the following result. 

\begin{thm}\label{CondBRW}
Suppose that the assumptions \ref{cond1},\ref{cond2}, \ref{cond3} and \ref{cond4} are all fulfilled. Conditionally on $\{W_\infty \ge x\}$, the following convergence in law holds as $x\to\infty$:
\begin{equation}
\left(\sum_{u\in\T}\delta_{V(u)-\tM}, \frac{W_\infty}{x}, \frac{D_\infty}{x\ln x},\tM+\ln x \right)\Longrightarrow (\widehat{\en}_\infty, e^U, \frac{\psi'(\kappa)-\psi'(1)}{\psi'(\kappa)}e^U, \ln \widehat{Z}-U)
\end{equation}
where $U$ has exponential distribution with mean $\kappa^{-1}$, $(\widehat{\en}_\infty, \widehat{Z})$ is independent of $U$ and has the following distribution:
\[
\P\left((\widehat{\en}_\infty, \widehat{Z})\in \cdot\right)=\frac{1}{\E[Z^\kappa]}\E[Z^\kappa\ind{(\en_\infty, Z)\in \, \cdot}].
\]
\end{thm}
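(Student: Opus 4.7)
The plan is to reduce the conditioning $\{W_\infty\ge x\}$ to one of the form $\{\tM\le-\ln(bx)\}$ with a parameter $b>0$, apply Theorem~\ref{BRWcvg} at $z=\ln(bx)$, and then let $b\to 0$. By Theorem~\ref{thmtailWM} (with $x$ replaced by $bx$ and $a$ by $1/b$), one has $\P(W_\infty\ge x,\,\mathcal{M}_e\ge bx)\sim\gamma(1/b)\,b^{-\kappa}x^{-\kappa}$, while $\gamma(1/b)\,b^{-\kappa}=c_\tM\,\E[(b^{-1}\wedge Z)^\kappa]\uparrow c_\tM\,\E[Z^\kappa]=C_0$ as $b\to 0$ by monotone convergence. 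Combined with $\P(W_\infty\ge x)\sim C_0 x^{-\kappa}$, this yields the localization
\[
\lim_{b\to 0}\limsup_{x\to\infty}\P(\mathcal{M}_e<bx\mid W_\infty\ge x)=0,
\]
so it suffices to compute $\lim_{x\to\infty}\E[F\,\ind{W_\infty\ge x,\,\mathcal{M}_e\ge bx}]/\P(W_\infty\ge x)$ for bounded continuous $F$ and then let $b\to 0$.

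Set $z_x:=\ln(bx)$. On $\{\tM\le-z_x\}$, Theorem~\ref{BRWcvg} implies, upon writing $W_\infty=\mathcal{M}_e\mathcal{W}^\tM$, $D_\infty=\mathcal{M}_e\mathcal{D}^\tM$, $\tM+\ln x=(\tM+z_x)+\ln(1/b)$, and using $|u^*|/\ln x\to 1/\psi'(\kappa)$, $\mathcal{D}^\tM/|u^*|\to(\psi'(\kappa)-\psi'(1))Z$, the joint convergence
\[
\Bigl(\sum_{u\in\T}\delta_{V(u)-\tM},\,\tfrac{W_\infty}{x},\,\tfrac{D_\infty}{x\ln x},\,\tM+\ln x\Bigr)\Longrightarrow\Bigl(\mathcal{E}_\infty,\,be^{U}Z,\,\tfrac{\psi'(\kappa)-\psi'(1)}{\psi'(\kappa)}be^{U}Z,\,\ln\tfrac{1}{b}-U\Bigr)
\]
under $\P(\cdot\mid\tM\le-z_x)$. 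Since $U$ has a density, $\P(be^{U}Z=1)=0$, so the continuous mapping theorem together with $\P(\tM\le-z_x)\sim c_\tM\,b^{-\kappa}x^{-\kappa}$ yields
\[
\frac{\E[F\,\ind{W_\infty\ge x,\,\mathcal{M}_e\ge bx}]}{\P(W_\infty\ge x)}\xrightarrow[x\to\infty]{}\frac{c_\tM}{C_0\,b^\kappa}\,\E\!\left[F\bigl(\mathcal{E}_\infty,be^{U}Z,\tfrac{\psi'(\kappa)-\psi'(1)}{\psi'(\kappa)}be^{U}Z,\ln\tfrac{1}{b}-U\bigr)\ind{be^{U}Z\ge 1}\right].
\]

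Substituting $\tilde u:=U-\ln(1/b)$, whose density is $\kappa b^{\kappa}e^{-\kappa\tilde u}\ind{\tilde u>\ln b}$, converts this right-hand side into $\tfrac{c_\tM\kappa}{C_0}\int_{\ln b}^{\infty}e^{-\kappa\tilde u}\,\E[F(\mathcal{E}_\infty,e^{\tilde u}Z,\tfrac{\psi'(\kappa)-\psi'(1)}{\psi'(\kappa)}e^{\tilde u}Z,-\tilde u)\,\ind{e^{\tilde u}Z\ge 1}]\,d\tilde u$. The integrand is dominated by $\|F\|_\infty e^{-\kappa\tilde u}\ind{\tilde u\ge -\ln Z}$ whose $d\tilde u$-integral equals $\|F\|_\infty Z^{\kappa}/\kappa\in L^1$ since $\E[Z^\kappa]<\infty$, so dominated convergence extends the integration to $\R$ as $b\to 0$. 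Fubini and the substitution $u=\tilde u+\ln Z$ then rewrite the limit as $\tfrac{c_\tM}{C_0}\E[Z^\kappa F(\mathcal{E}_\infty,e^{U},\tfrac{\psi'(\kappa)-\psi'(1)}{\psi'(\kappa)}e^{U},\ln Z-U)]$ with $U\sim\mathrm{Exp}(\kappa)$ independent of $(\mathcal{E}_\infty,Z)$; by the definition of the reweighted law of $(\widehat{\mathcal{E}}_\infty,\widehat Z)$ and the identity $c_\tM\E[Z^\kappa]=C_0$, this equals $\E[F(\widehat{\mathcal{E}}_\infty,e^{U},\tfrac{\psi'(\kappa)-\psi'(1)}{\psi'(\kappa)}e^{U},\ln\widehat Z-U)]$, which is precisely the target. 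The main obstacle is the coordination of this double limit: Theorem~\ref{BRWcvg} only operates at fixed $b$, so the uniform control on $\{\mathcal{M}_e<bx\}$ from the localization is essential, and the $b\to 0$ interchange rests on $\E[Z^\kappa]<\infty$ recorded in the remark following Theorem~\ref{BRWcvg}.
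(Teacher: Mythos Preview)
Your proposal is correct and follows essentially the same route as the paper: localize $\{W_\infty\ge x\}$ onto $\{\mathcal{M}_e\ge bx\}$ using Theorem~\ref{thmtailWM} and $\gamma(1/b)b^{-\kappa}\uparrow C_0$, apply Theorem~\ref{BRWcvg} at $z=\ln(bx)$, and then pass $b\to 0$ via dominated convergence using $\E[Z^\kappa]<\infty$. The paper's proof is organized identically (with $\varepsilon$ in place of your $b$ and the change of variables $r=u+\ln Z+\ln\varepsilon$ in place of your two-step substitution $\tilde u=U+\ln b$, $u=\tilde u+\ln Z$), so there is no substantive difference.
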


\begin{rem}
The weak convergence result for $(\sum_{u\in\T}\delta_{V(u)-\tM}, \tM+\ln x, \frac{W_\infty}{x})$ conditioned on $W_\infty \ge x$, still holds if the assumption \eqref{cond4} is weakened by $\psi(1-\delta_0)+\E[(W_1)^{\kappa+\delta_0}]<\infty$.
\end{rem}

Throughout this paper, $f(x)\lesssim g(x)$ if there exists some constant $c>0$ such that $f(x)\le c g(x)$. Moreover, if $c$ depends on some parameter $\alpha$, we write $f(x)\lesssim_\alpha g(x)$. And $f(x) \asymp g(x)$ means that $f(x)\lesssim g(x)$ and $g(x) \lesssim f(x)$. We use $\{c_n\}_{n\ge1}$ to denote positive constants. We write $f(x)=o_x(1)g(x)$ if $\frac{f(x)}{g(x)}$ goes to zero as $x\to\infty$ and $f(x)\sim g(x)$ if $\frac{f(x)}{g(x)}\to 1$ as $x\to\infty$. We will use $C_{(\cdot)}$ and $(c_i)_{i\ge0}$ to represent positive constants.

We finish this section with a short review of the organization of the paper. In Section \ref{RW}, we introduce Lyons' change of measure and spinal decomposition, as well as some preliminary results on random walks. In Section \ref{mainthm}, we prove Proposition \ref{Dcvg} and Theorems \ref{BRWcvg}, \ref{thmtailWM} and \ref{CondBRW}. At the end, there is an appendix where we give the proof of some technical lemmas.

\subsection*{Acknowledgement.} We would like to thank Pascal Maillard for useful discussions. The first author is supported by the National Key R\&D program of China No. 2022YFA1006500.

\section{Lyons' Change of Measure and Spinal Decomposition}\label{RW}

Recall that under $\P$, the branching random walk is constructed by use of the point process $\L$. Let us introduce a probability measure $\Q^{\theta,*}$ of a branching random walk with a spine: $\{(V(u); u\in\T), (w_n, V(w_n))_{n\geq0}\}$. First, fix $\theta>0$ such that $ \psi(\theta)<\infty$. As $\E[\int e^{-\theta x}\L(\d x)]=e^{\psi(\theta)}$, let $\widehat{\L}$ be a point process with Radon-Nykodim derivative $\int e^{-\theta x - \psi(\theta)}\L(\d x)$ with respect to the law of $\L$. We use $\widehat{\L}$ and $\L$ to construct  $\{(V(u); u\in\T), (w_n, V(w_n))_{n\geq0}\}$ under $\Q^{\theta,*}_a$ for any $a\in\R$ as follows. 

\begin{enumerate}[(i)]
\item For the root $\rho$, let $V(\rho)=a$ and $w_0=\rho$. $w_0$ gives birth to its children according to the point process $\widehat{\L}$ (i.e., the relative positions of its children with respect to $V(w_0)$ are distributed as $\widehat{\L}$).
\item For any $n\geq0$, suppose that the process with the spine $(w_k)_{0\leq k\leq n}$ has been constructed up to the $n$-th generation. All individuals of the $n$-th generation, except $w_n$, produce independently their children according to the law of $\L$. Yet, the individual $w_n$ produces its children, independently of the others, according to the law of $\widehat{\L}$. All the children of the individuals of the $n$-th generation form the $(n+1)$-th generation, whose positions are denoted by $V(\cdot)$. And among the children of $w_n$, we choose $w_{n+1}=u$ with probability proportional to $e^{-\theta V(u)}$.
\end{enumerate}
 
 We denote by $\Q^{\theta}_a$ the marginal distribution of $(\T, (V(u), u\in\T))$ under $\Q^{\theta,*}_a$. For simplicity, write $\Q^{\theta,*}$ and $\Q^{\theta}$ for $\Q_0^{\theta,*}$ and $\Q_0^\theta$ respectively. In particular, when $\theta=1$, we write $\Q^*$ and $\Q$ for $\Q^{1,*}$ and $\Q^{1}$. The corresponding expectations are denoted by $\E_{\Q^{\theta, *}}$ and $\E_{\Q^{\theta}}$.
 
 For the genealogical tree $\T$ and for two vertices $u,v\in\T$, write $u\leq v$ if $u$ is an ancestor of $v$ and write $u<v$ if $u\leq v$ but $u\neq v$. Let $c(u)$ denote the set of children of $u$ and let $\T_u:=\{v\in\T\vert v\ge u\}$ denote the subtree rooted at $u$. If $u\neq \rho$, let $\overleftarrow{u}$ be the parent of $u$ and let
 \[
 \Delta V(u)= V(u)-V(\overleftarrow{u})
 \]
 be the displacement of $u$. Moreover, let $\Omega(u)$ be the set of siblings of $u$.
 Let us state the following proposition given by Lyons \cite{Lyons97}.  

 \begin{prop}\label{BRWchangeofp}  Let $n \geq 0$ and $a \in \mathbb{R}$.  Let $\mathcal{F}_n$ be the sigma-field generated by $\{(u,V(u)); |u|\leq n\}$.
 \begin{enumerate} 
\item We have  
\[
\frac{\d \Q^{\theta}_a }{\d\P_a} \mid_{\mathcal{F}_n}=e^{\theta a} W_n(\theta) =\sum_{|u|=n}e^{-\theta V(u) +\theta a-n \psi(\theta)}.  
\]

\item For any vertex $u\in\T$ at the $n$-th generation,
\[
\Q^{\theta,*}_a(w_n=u\mid\mathcal{F}_n)=\frac{e^{-\theta V(u)-n \psi(\theta)}}{W_n(\theta)}.
\]
\item Under $\Q^{\theta,*}_a$, $\{V(w_{n})-V(w_{n-1}), \sum_{u\in\Omega(w_n)}\delta_{\Delta V(u)}\}_{n\ge1}$ are i.i.d. random variables. 
\end{enumerate}
\end{prop}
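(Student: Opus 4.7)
The plan is to unwind the step-by-step construction of $\Q^{\theta,*}_a$ given in (i)--(ii) and compute the joint density of the tree-with-spine configuration up to generation $n$ by a direct induction on $n$. I will exploit that at each step $k$, only the spine vertex $w_k$ reproduces according to $\widehat{\L}$ rather than $\L$: conditionally on the past, this reproduction step contributes, relative to the $\L$-reproduction under $\P_a$, the Radon--Nikodym factor
\[
\sum_{v\in c(w_k)} e^{-\theta(V(v)-V(w_k))-\psi(\theta)} = e^{\theta V(w_k)-\psi(\theta)} \sum_{v\in c(w_k)} e^{-\theta V(v)},
\]
while all other vertices reproduce exactly as under $\P_a$ and contribute nothing. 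The subsequent selection $w_{k+1}=u\in c(w_k)$ adds the conditional weight $e^{-\theta V(u)}/\sum_{v\in c(w_k)} e^{-\theta V(v)}$.

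The key simplification I expect is that when I multiply these two factors the normaliser cancels, leaving the clean single-step contribution $e^{\theta V(w_k)-\theta V(w_{k+1})-\psi(\theta)}$. Taking the product from $k=0$ to $n-1$ produces a telescoping exponent, so that for any non-negative $\mathcal{F}_n$-measurable $F$ and any $u$ with $|u|=n$ I obtain
\[
\E_{\Q^{\theta,*}_a}\!\left[F\,\ind{w_n=u}\right] = \E_{\P_a}\!\left[F\, e^{\theta a-\theta V(u)-n\psi(\theta)}\right].
\]
Summing this identity over $u$ at generation $n$ yields $\d\Q^\theta_a/\d\P_a\mid_{\mathcal{F}_n} = e^{\theta a}W_n(\theta)$, proving part (1); dividing the joint identity by this marginal then gives part (2) at once.

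For part (3), I will argue directly from the construction: given $\mathcal{F}_n$ and the spine path up to generation $n$, the conditional law of the pair $(V(w_{n+1})-V(w_n),\,\sum_{u\in\Omega(w_{n+1})}\delta_{\Delta V(u)})$ depends only on the $\widehat{\L}$-reproduction of $w_n$ and on the size-biased choice of $w_{n+1}$ proportional to $e^{-\theta V(u)}$. This generating mechanism is identical at every generation and, since distinct vertices reproduce independently in the construction, is independent of the past; hence the sequence is i.i.d. The only real obstacle I anticipate is the bookkeeping of the normaliser cancellations in the inductive step above; once that is handled, everything reduces to the standard telescoping at the heart of Lyons' change of measure.
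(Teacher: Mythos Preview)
Your proposal is correct and is precisely the standard Lyons argument: the paper does not supply its own proof of this proposition but simply attributes it to Lyons \cite{Lyons97}, and the induction-and-telescoping computation you outline is exactly the classical derivation from that reference.
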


In particular, when $\theta=1$, it is known that $W_n$ converges to $W_\infty $ in $L^1(\P)$. Then we have
\[
\d\Q=\d \Q^1= W_\infty\d \P,
\]
where $W_\infty$ is also $\Q$-a.s. limit of $W_n$.

\vspace{6pt}

\textbf{The probability $\mathbf{Q}^{\theta,*}_{\vert n} \otimes \mathbf{P}$.} 
For any $n \in \mathbb{N}$, we introduce the probability law $\mathbf{Q}^{\theta,*}_{\vert n} \otimes\mathbf{P}$ for the marked branching random walk $\{(u,V(u))_{u\in\T}; (w_k, V(w_k))_{0\le k\le n}\}$, under which 
up to time $n$ $\{(u, V(u))_{|u|\le n}, (w_k, V(w_k))_{0\le k\le n}\}$ is distributed as a branching random walk under $\mathbf{Q}^{\theta,*}$ and after the time $n$ every alive particle at time $n$ will branch according to the original point process $\mathcal{L}$ under $\mathbf{P}$. By Proposition \ref{BRWchangeofp} and branching property, we have the following corresponding result.

\begin{prop}\label{BRWchangeofp2}
  For any $n\geq0$,  we have 
 \[
 \frac{\d (\Q^{\theta}_{\vert n} \otimes \P )  }{\d\P }  = W_{n}(\theta)=\sum_{|u|=n}e^{-\theta V(u)  -n \psi(\theta)}, 
 \]
and for any individual $u\in\T$ of the $n$-th generation,
 \[
  (\Q^{\theta,*}_{\vert n} \otimes \P ) (w_{n}= u | \mathcal{F}_{\infty}) =\frac{e^{-\theta V(u)-n \psi(\theta)}}{W_n(\theta)}.
 \]  
 \end{prop}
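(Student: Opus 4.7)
The plan is to deduce Proposition \ref{BRWchangeofp2} from Proposition \ref{BRWchangeofp} by exploiting the branching property of $\P$ after time $n$. By the very definition of $\Q^{\theta,*}_{\vert n} \otimes \P$, the measure describes the following sampling procedure: draw the first $n$ generations and the spine $(w_k)_{0\le k\le n}$ according to $\Q^{\theta,*}$, then, independently and conditionally on $\mathcal{F}_n$, let each particle $u$ with $|u|=n$ generate its descendant subtree under the original law $\P$ shifted to start at $V(u)$. Under $\P$ itself, the branching property gives exactly the same conditional law of the post-$n$ evolution given $\mathcal{F}_n$. Hence the two measures $\Q^{\theta}_{\vert n}\otimes\P$ and $\P$ share identical conditional distributions on $\mathcal{F}_\infty$ given $\mathcal{F}_n$, and differ only on $\mathcal{F}_n$.

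For the first identity I would take any bounded $\mathcal{F}_\infty$-measurable $F$ and set $g:=\E_\P[F\mid\mathcal{F}_n]$. The previous observation yields $\E_{\Q^{\theta}_{\vert n}\otimes\P}[F]=\E_{\Q^{\theta}}[g]$, and applying Proposition \ref{BRWchangeofp}(1) at generation $n$ gives
\[
\E_{\Q^{\theta}_{\vert n}\otimes\P}[F]=\E_\P\!\left[W_n(\theta)\,g\right]=\E_\P\!\left[W_n(\theta)\,F\right],
\]
where the last equality uses $\mathcal{F}_n$-measurability of $W_n(\theta)$ together with the tower property. This is exactly the claimed Radon--Nikodym identity $\d(\Q^{\theta}_{\vert n}\otimes\P)/\d\P=W_n(\theta)$.

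For the second identity, the key point is that under $\Q^{\theta,*}_{\vert n}\otimes\P$ the spine marker $w_n$ depends only on the BRW up to generation $n$ and on the auxiliary spine randomness used in step (ii) of the construction, while the subtrees grown after time $n$ are sampled from $\P$ with no reference to the spine. Consequently $w_n$ is conditionally independent of $\mathcal{F}_\infty$ given $\mathcal{F}_n$, so
\[
(\Q^{\theta,*}_{\vert n}\otimes\P)(w_n=u\mid\mathcal{F}_\infty)=(\Q^{\theta,*}_{\vert n}\otimes\P)(w_n=u\mid\mathcal{F}_n),
\]
and since the joint law of $\{(u,V(u))_{|u|\le n},(w_k,V(w_k))_{0\le k\le n}\}$ coincides with that under $\Q^{\theta,*}$, Proposition \ref{BRWchangeofp}(2) identifies the right-hand side with $e^{-\theta V(u)-n\psi(\theta)}/W_n(\theta)$.

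The only delicate point is articulating this conditional independence of $w_n$ and the post-$n$ BRW given $\mathcal{F}_n$, which is immediate from the two-step construction of $\Q^{\theta,*}_{\vert n}\otimes\P$ but does require writing the underlying probability space carefully; everything else is a routine tower-property computation.
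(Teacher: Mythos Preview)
Your proposal is correct and follows exactly the approach the paper indicates: the paper does not spell out a proof of Proposition~\ref{BRWchangeofp2} but simply states that it follows ``by Proposition~\ref{BRWchangeofp} and branching property,'' which is precisely the argument you give in detail. Your tower-property computation for the first identity and your conditional-independence reduction for the second are the natural way to make that sentence rigorous.
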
 

\subsection{Many-to-One Lemma and Renewal theory.}
\label{lemBRW}

Recall that $\sup_{\theta\in(1-\delta_0, \kappa+\delta_0)}\psi(\theta)<\infty$  for some $\delta_0>0$ and $\psi(1)=\psi(\kappa)=0$.     We have the well-known many-to-one lemma.

\begin{lem}[Many-to-One]\label{ManytoOne}
Suppose the assumptions \ref{cond1}-\ref{cond4}.  
For any $n \geq 1, a \in \mathbb{R}$ and any measurable function $g: \mathbb{R}^n \rightarrow \mathbb{R}_{+}$, and for any $\theta \in(1-\delta_0, \kappa+\delta_0)$, 
\begin{equation*}
\mathbf{E} \left[\sum_{|z|=n} g\left(V\left(z_1\right), \cdots, V\left(z_n\right)\right)\right]=\mathbf{E} \left[ e^{\theta S^{(\theta)}_n + n \psi(\theta)} g\left(S^{(\theta)}_1, \cdots, S^{(\theta)}_n\right)\right],
\end{equation*}
where $(S^{(\theta)}_n :n\geq 1) $ is a random walk with i.i.d. increments such that 
\begin{equation*}
\mathbf{E}\left[S^{(\theta)}_1\right]  = - \psi'(\theta) \text{ and }   \mathbf{E}\left[e^{-\lambda S^{(\theta)}_{1} } \right]=\exp\{ \psi(\lambda+\theta)-\psi(\theta)  \}  
\end{equation*}
 for every $\lambda \in \mathbb{R}$ with $ \psi(\lambda+\theta)<\infty $. 
In particular, under $\Q^{\theta,*}_a$, $(V(w_i); 1\leq i\leq n)$ has the same distribution as $(S^{(\theta)}_i; 1\leq i\leq n)$ under $\P_a$ where $\P_a(S^{(\theta)}_0=a)=1$.  

\end{lem}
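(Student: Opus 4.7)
The plan is to deduce the Many-to-One identity as an essentially algebraic consequence of the spinal change of measure packaged in Proposition~\ref{BRWchangeofp}. By translation invariance---both sides of the claimed identity transform in the same way under a deterministic shift by $a$---it suffices to prove the case $a=0$. Throughout, $\theta\in(1-\delta_0,\kappa+\delta_0)$ is the fixed tilting parameter, and Assumption~\ref{cond4} guarantees $\psi(\theta)<\infty$.

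The main intermediate step is to establish the identity
\[
\E\Big[\sum_{|z|=n} g(V(z_1),\ldots,V(z_n))\Big] = \E_{\Q^{\theta,*}}\Big[e^{\theta V(w_n)+n\psi(\theta)}\, g(V(w_1),\ldots,V(w_n))\Big].
\]
To see this, for each $|z|=n$ I would use the trivial factorisation
\[
g(V(z_1),\ldots,V(z_n)) = W_n(\theta)\cdot \frac{e^{-\theta V(z)-n\psi(\theta)}}{W_n(\theta)}\cdot e^{\theta V(z)+n\psi(\theta)}\, g(V(z_1),\ldots,V(z_n)),
\]
sum over $|z|=n$, and take $\P$-expectation. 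The factor $W_n(\theta)$ is exactly the Radon--Nikodym derivative $\d\Q^\theta/\d\P\mid_{\nf_n}$ from Proposition~\ref{BRWchangeofp}(1), so it switches the expectation to $\Q^\theta$ (equivalently, to $\Q^{\theta,*}$ on $\nf_n$). The remaining weighted sum $\sum_{|z|=n}\frac{e^{-\theta V(z)-n\psi(\theta)}}{W_n(\theta)}\,[\,\cdot\,]$ is recognised via Proposition~\ref{BRWchangeofp}(2) as the conditional expectation under $\Q^{\theta,*}$ of $e^{\theta V(w_n)+n\psi(\theta)}\, g(V(w_1),\ldots,V(w_n))$ given $\nf_n$, and taking total expectation yields the claim.

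The second step is to identify the law of $(V(w_i))_{1\le i\le n}$ under $\Q^{\theta,*}_0$ with that of the random walk $(S^{(\theta)}_i)_{1\le i\le n}$ under $\P_0$. Proposition~\ref{BRWchangeofp}(3) already guarantees that the increments are i.i.d., so it suffices to compute the one-step Laplace transform. Specialising the intermediate identity to $n=1$ with $g(x)=e^{-(\lambda+\theta)x-\psi(\theta)}$ (chosen so that the exponential prefactor on the right collapses to $e^{-\lambda V(w_1)}$) gives
\[
\E_{\Q^{\theta,*}}\big[e^{-\lambda V(w_1)}\big] = e^{-\psi(\theta)}\,\E\Big[\sum_{|z|=1} e^{-(\lambda+\theta)V(z)}\Big] = \exp\{\psi(\lambda+\theta)-\psi(\theta)\},
\]
valid for every $\lambda$ with $\psi(\lambda+\theta)<\infty$. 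Differentiating at $\lambda=0$ produces the mean formula $\E[S^{(\theta)}_1]=-\psi'(\theta)$; this differentiation under the expectation is legitimate because Assumption~\ref{cond4} places $\theta$ in the open interior of $\{\psi<\infty\}$, where $\psi$ is smooth.

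Substituting the identified spine law into the main identity finishes the proof. I do not anticipate any genuine obstacle: the statement is classical in the BRW literature, and once Proposition~\ref{BRWchangeofp} is at hand the derivation is essentially bookkeeping. The only technical point worth noting is the legitimacy of the differentiation at the end, which is handled by the integrability built into Assumption~\ref{cond4}.
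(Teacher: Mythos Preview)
Your derivation is correct and is the standard route via the spinal change of measure; the only cosmetic point is that the factorisation step tacitly assumes $W_n(\theta)>0$, but on the complementary event there are no particles at generation $n$ and both sides vanish, so this is harmless. Note, however, that the paper does not actually prove Lemma~\ref{ManytoOne}: it is simply stated as ``the well-known many-to-one lemma'' and used as a black box, so there is no paper proof to compare against --- your argument is exactly the kind of short justification one would supply if asked to fill in the details.
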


For simplicity we write  $S_{n}$ for $S^{(1)}_{n}$. Write $\kappa'=\kappa-1\in(0,\infty)$.
Immediately, we deduce from this lemma that
\begin{equation}\label{rwschange}
\E_a\left[g(S_1,\cdots,S_n)\right]=\E_a\left[e^{\kappa'(S^{(\kappa)}_n-a)}g(S^{(\kappa)}_1,\cdots,S^{(\kappa)}_n)\right].
\end{equation}
This means that the law of $(S_n^{(\kappa)})$ can be obtained from some Girsanov-type change of measure on $(S_n)$. Moreover, note that
\[
\E[S_1]=-\psi'(1)>0,\textrm{ and } \E[S_1^{(\kappa)}]=-\psi'(\kappa)<0.
\]

Next, let us state some classic results on random walks $(S_n)_{n\geq0}$ and $(S_n^{(\kappa)})_{n\geq0}$.

\subsubsection{Renewal theory for one-dimensional random walk.}
\label{Renewaltheory}

For the random walk $(S_n^{(\kappa)})_{n\geq0}$, we define the renewal measures $U_s^{(\kappa),\pm}$ corresponding to the strict ascending/descending ladder process by 
\begin{equation}
U_s^{(\kappa),\pm}([0,x]):=\E\left[\sum_{k=0}^{\tau^{(\kappa),\mp}-1}\ind{\pm S^{(\kappa)}_k\leq x}\right], \forall x\geq0.
\end{equation}
with $\tau^{(\kappa),+}:=\inf\{k\geq1: S^{(\kappa)}_k\geq0\}$ and $\tau^{(\kappa),-}:=\inf\{k\geq1: S^{(\kappa)}_k\leq 0\}$. As usual, we set the strict renewal functions to be $R_s^{(\kappa),\pm}(x):=U_s^{(\kappa),\pm}([0,x])$. Then it is known that 
there exist constants $C_s^{(\kappa),\pm}\in(0,\infty)$ such that for any $h>0$, as $x\rightarrow\infty$, 
\begin{equation}
  R^{(\kappa),+}_s(x)\rightarrow C_s^{(\kappa),+}\textrm{ and } U^{(\kappa),-}_s((x-h,x])\rightarrow C_s^{(\kappa),-}h. \label{eq-renewal-bound}
\end{equation} 
For any $0\leq j< n$, let
\[
\MS_{[j,n]}^{(\kappa)}:=\max_{j\leq k\leq n}{S_k}^{(\kappa)}, \mS_{[j,n]}^{(\kappa)}:=\min_{j\leq k\leq n}S_k^{(\kappa)}.
\]
Then we can rewrite $  R_s^{(\kappa),-}(x) $ as 
\[ 
  R_s^{(\kappa),-}(x) =U_s^{(\kappa),-}([0,x])= \sum_{n \geq 0} \P\left(   \MS^{(\kappa)}_{[1,n]}<0, S^{(\kappa)}_n \leq - x  \right). 
\] 
Thus for  for $I(x) = ( - x -1, - x ]$, we have
\begin{align}
&\sum_{n\ge 0} \E\left[ e^{\kappa S^{(\kappa)}_n+\kappa x}\ind{\MS^{(\kappa)}_{[1,n]}<0, S^{(\kappa)}_n\in I(x)} \right] \notag \\
&= \int_{[x,x+1)} e^{\kappa (x-y) } U_s^{ (\kappa), -}( \d y )  
\overset{x \to \infty}{\longrightarrow}  C_s^{( \kappa ), -} \frac{ 1- e^{-\kappa } }{\kappa} \label{RWeScvg}
\end{align} 
 In fact, we will see that in this sum, the main contribution comes from $n\approx \frac{x}{\psi'(\kappa)}$.
Let  
\begin{equation}\label{DefJx}
  J(x):=\left[\frac{x}{\psi'(\kappa)}-b(x)\sqrt{x}, \frac{x}{\psi'(\kappa)}+b(x)\sqrt{x}\right].
\end{equation} 
with $b(x)=o(x)$ and $b(x)\gg1$. We have the following Lemma which generalizes \eqref{RWeScvg}.

\begin{lem}\label{Rfunctioncvg}
Under the assumptions of Lemma \ref{ManytoOne}, 
for any  bounded continuous  function $\phi_0:\R\to\R_+$, and for any $a \geq 0$,   there exists some constant $\cb(a)>0$ such that,  
\begin{equation}\label{RWsumecvg}
\lim_{x\to\infty}\sum_{n\in J(x)} \phi_0(\sqrt{\frac{\psi'(\kappa)}{x}}(n-\frac{x}{\psi'(\kappa)}))\E_{-a}\left[e^{\kappa S^{(\kappa)}_n+\kappa x}\ind{\MS^{(\kappa)}_{[1,n]}<0, S^{(\kappa)}_n\in I(x)}\right] 
=\E[\phi_0(G)]\cb(a).
\end{equation}
where $G$ is a centred Gaussian random variable with variance $\frac{\psi''(\kappa)}{\psi'(\kappa)^2}$.

In addition, there exists a constant $c_0>0$ such that for any $x \geq 0$ and $a\ge0$,
\begin{equation}\label{RWeSbd}
   \sum_{k\geq0} \P_{-a}\left(  \MS^{(\kappa)}_{[1,k]}<0, S^{(\kappa)}_k\in I(x) \right) \leq  c_0(1+a) 
  \end{equation} 
\end{lem}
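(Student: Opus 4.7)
My first move is to pass to the dual walk $\check S_k := -S_k^{(\kappa)}$, which is a random walk with \emph{positive} drift $\E[\check S_1 - \check S_0]=\psi'(\kappa) > 0$ and variance $\psi''(\kappa)$; under $\P_{-a}$ one has $\check S_0 = a \geq 0$. Setting $\tau^- := \inf\{k \geq 1 : \check S_k \leq 0\}$, the conditions transform as $\{\MS^{(\kappa)}_{[1,n]} < 0\} = \{\tau^- > n\}$ and $\{S_n^{(\kappa)} \in I(x)\} = \{\check S_n \in [x, x+1)\}$, so that
\[
\E_{-a}\bigl[e^{\kappa S_n^{(\kappa)} + \kappa x}\ind{\MS^{(\kappa)}_{[1,n]} < 0,\ S_n^{(\kappa)} \in I(x)}\bigr] = \int_{[x, x+1)} e^{-\kappa(y-x)}\,\P\bigl(\tau^- > n,\ \check S_n \in \d y\bigm| \check S_0 = a\bigr).
\]

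\textbf{Step 2: Local CLT with barrier and Riemann sum.}
I introduce $h(a) := \P(\tau^- = \infty \mid \check S_0 = a)$, which is strictly positive for $a \geq 0$ since $\check S$ has positive drift, and $h(y)\uparrow 1$ as $y \to \infty$. The Markov property at time $n$ yields $\P(\tau^- = \infty, \check S_n \in \d y \mid \check S_0=a) = h(y)\,\P(\tau^- > n, \check S_n \in \d y \mid \check S_0=a)$, while the difference $\P(n < \tau^- < \infty, \check S_n \in \d y)$ is negligible by a Cram\'er-type estimate $\P_y(\tau^- < \infty) = O(e^{-\lambda y})$ for $y \to \infty$. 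Combined with Stone's local CLT for the unrestricted positive-drift walk $\check S$, one obtains, uniformly over $|y - n\psi'(\kappa)| = O(\sqrt n)$ with $y \to \infty$,
\[
\P\bigl(\tau^- > n,\ \check S_n \in \d y \bigm| \check S_0 = a\bigr) \,=\, (1+o(1))\,\frac{h(a)}{\sqrt{2\pi n \psi''(\kappa)}}\,\exp\!\Bigl(-\frac{(y - n\psi'(\kappa))^2}{2n\psi''(\kappa)}\Bigr)\d y.
\]
Now parameterising $n\in J(x)$ by $t = \sqrt{\psi'(\kappa)/x}(n - x/\psi'(\kappa))$ and $y \in [x,x+1)$ by $s = y - x \in [0,1)$, one has $y - n\psi'(\kappa) = s - t\sqrt{x\psi'(\kappa)}$ and the Gaussian exponent simplifies to $-t^2/(2\sigma^2) + o(1)$ with $\sigma^2 := \psi''(\kappa)/\psi'(\kappa)^2$, uniformly in $s \in [0,1)$ and $|t|\leq b(x)$. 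Integrating the weight $e^{-\kappa s}$ over $s \in [0,1)$ yields the factor $(1-e^{-\kappa})/\kappa$, and the sum over $n \in J(x)$, viewed as a Riemann sum with mesh $\Delta t = \sqrt{\psi'(\kappa)/x}$, converges by dominated convergence (using $b(x)\to\infty$) to
\[
h(a)\cdot\frac{1-e^{-\kappa}}{\kappa}\cdot\frac{1}{\sqrt{2\pi\psi''(\kappa)}}\int_{\R}\phi_0(t)e^{-t^2/(2\sigma^2)}\,\d t \,=\, \cb(a)\,\E[\phi_0(G)],
\]
where $\cb(a) := h(a)\,(1-e^{-\kappa})/(\kappa \psi'(\kappa))$, $G \sim \mathcal{N}(0,\sigma^2)$, and the positivity of $\cb(a)$ follows from $h(a)>0$.

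\textbf{Step 3: Uniform bound and main obstacle.}
For the bound \eqref{RWeSbd} I drop the indicator $\{\MS^{(\kappa)}_{[1,k]} < 0\}$ and use translation-invariance of the walk to obtain
\[
\sum_{k\geq 0}\P_{-a}(\MS^{(\kappa)}_{[1,k]}<0,\ S_k^{(\kappa)}\in I(x)) \,\leq\, \sum_{k\geq 0}\P_0(S_k^{(\kappa)} \in I(x)+a) \,=\, U^{(\kappa)}\bigl((a-x-1, a-x]\bigr),
\]
the mass of a unit interval under the renewal measure of $S^{(\kappa)}$. Since $S^{(\kappa)}$ is transient, non-lattice, and has finite negative drift, Stone's local CLT for renewal measures gives $\sup_{y\in\R}U^{(\kappa)}((y-1,y]) < \infty$, hence a uniform bound that is \emph{a fortiori} of the form $c_0(1+a)$. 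The main obstacle is proving the LCLT with barrier in Step~2 with uniformity in both $y\in[x,x+1)$ and $n\in J(x)$: the heuristic is transparent, but making it rigorous requires either Doney's $h$-transform (which recasts the walk conditioned on $\tau^- = \infty$ as a Markov chain to which a classical LCLT applies) or a decomposition at a large fixed intermediate time, decoupling the constraint $\tau^- > n$ from the terminal location and reducing the analysis to Stone's LCLT for a nearly unrestricted random walk.
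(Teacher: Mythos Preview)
Your approach to \eqref{RWsumecvg} is essentially the same as the paper's: both hinge on a local limit theorem with barrier of the form
\[
\P_{-a}\bigl(\MS^{(\kappa)}_{[1,n]}<0,\ S_n^{(\kappa)}\in(-x-\lambda,-x]\bigr)=(1+o(1))\,h(a)\,\frac{\lambda}{\sqrt{2\pi n\psi''(\kappa)}}\,e^{-\frac{(x-n\psi'(\kappa))^2}{2n\psi''(\kappa)}}
\]
uniformly for $n\in J(x)$, followed by the Riemann-sum passage. The paper proves this LCLT-with-barrier exactly by the second method you name in your ``main obstacle'' paragraph: it splits at an intermediate time $n^\epsilon$, uses an exponential Cram\'er bound to show that $\{\MS^{(\kappa)}_{[1,n]}<0\}$ differs from $\{\MS^{(\kappa)}_{[1,n^\epsilon]}<0\}$ by an exponentially small error, localises $S^{(\kappa)}_{n^\epsilon}\in(-n^{2\epsilon},0)$, and then applies Stone's unrestricted LCLT to the remaining $n-n^\epsilon$ steps. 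Your constant $\cb(a)=h(a)(1-e^{-\kappa})/(\kappa\psi'(\kappa))$ matches the paper's $C_\kappa\,U^{(\kappa),+}_w([0,a))$ once one checks (as the paper does via a renewal equation) that $h(a)=p(\kappa)\,U^{(\kappa),+}_w([0,a))$.

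For \eqref{RWeSbd} your route genuinely differs from the paper's, and is shorter. The paper keeps the barrier, applies time reversal to convert $\{\MS^{(\kappa)}_{[1,k]}<0\}$ into a descending-ladder condition, and then decomposes along strict descending ladder epochs to obtain an explicit formula $R(x,a)=U^{(\kappa),+}_w([0,a))R^{(\kappa),-}_s(x)-\int\ldots$ whose increment in $x$ is bounded by $c_0(1+a)$. You instead drop the barrier entirely and bound by the unrestricted potential measure $U^{(\kappa)}$ on a unit interval, which is translation-bounded for any non-lattice walk with nonzero mean; this yields a constant bound, which is stronger than the stated $c_0(1+a)$. One small correction: what you invoke is not ``Stone's local CLT for renewal measures'' but rather the Wiener--Hopf factorisation $U^{(\kappa)}=U^{(\kappa),+}_w*U^{(\kappa),-}_s$ together with $U^{(\kappa),+}_w(\R)<\infty$ (defective ascending ladder) and Blackwell's theorem for $U^{(\kappa),-}_s$. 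The paper's longer argument has the advantage of making the $a$-dependence explicit, which is not actually needed here but is used elsewhere in the paper when the resulting bound is integrated against quantities depending on $a$.
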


%
The proof of this Lemma is postponed to Appendix \ref{App}.

\section{Proof of Theorem \ref{OptimalStrategy}}

We begin by establishing a rough estimate on the tail of $\tM$.

\begin{lem}\label{BRWroughbd}
Under the assumptions of Theorem \ref{BRWcvg}, there exists $0<c_{1}\leq 1$ such that 
\begin{equation}\label{BRWrhbdM}
 c_{1} e^{-\kappa x} \leq \P(\tM\leq -x) \leq e^{-\kappa x}, \forall x\geq1.
\end{equation} 
\end{lem}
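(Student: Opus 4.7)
Since $\psi(\kappa)=0$, the process $W_n(\kappa):=\sum_{|u|=n}e^{-\kappa V(u)}$ is a nonnegative $\P$-martingale with $W_0(\kappa)=1$. As $e^{-\kappa V(u)}\le W_n(\kappa)$ for every $|u|=n$, taking suprema yields $e^{-\kappa\tM}=\sup_{u\in\T}e^{-\kappa V(u)}\le\sup_{n\ge 0}W_n(\kappa)$, and Doob's maximal inequality then gives
\[
\P(\tM\le -x)\;=\;\P\bigl(\sup_n W_n(\kappa)\ge e^{\kappa x}\bigr)\;\le\;e^{-\kappa x},
\]
which is the stated upper bound (with constant $1$).

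\textbf{Lower bound.} The strategy is Lyons' change of measure at $\theta=\kappa$. Set $\tau_x:=\inf\{n:V(w_n)\le-x\}$. Under $\Q^{\kappa,*}$, by Proposition \ref{BRWchangeofp} and Lemma \ref{ManytoOne}, the spine walk $(V(w_n))_{n\ge 0}$ is a random walk with i.i.d.\ increments of mean $-\psi'(\kappa)<0$, so $\tau_x$ is $\Q^{\kappa,*}$-a.s.\ finite. By the renewal theorem in the non-lattice case (Assumption \ref{cond3}), the overshoot $-V(w_{\tau_x})-x$ converges in distribution to a non-degenerate limit, so a standard argument yields
\[
q\;:=\;\inf_{x\ge 1}\Q^{\kappa,*}\bigl(V(w_{\tau_x})\in[-x-1,-x]\bigr)\;>\;0.
\]
Since $\{\tau_x<\infty\}\subseteq\{\tM\le -x\}$ (the spine particle $w_{\tau_x}$ is itself a witness), applying Proposition \ref{BRWchangeofp2} in the framework $\Q^{\kappa,*}_{|n}\otimes\P$ together with an optional-stopping argument at $\tau_x$ for the $\P$-martingale $W_n(\kappa)$, and then Markov's inequality, produces an estimate of the form
\[
\P(\tM\le -x)\;\ge\;\frac{1}{Ce^{\kappa x}}\;\Q^{\kappa,*}\bigl(V(w_{\tau_x})\in[-x-1,-x],\;W_{\tau_x}(\kappa)\le Ce^{\kappa x}\bigr),
\]
for a suitable constant $C$. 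The lower bound $c_1e^{-\kappa x}$ then reduces to showing that the probability on the right is bounded below by a positive constant uniform in $x\ge 1$.

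\textbf{Main obstacle.} On the event $\{V(w_{\tau_x})\in[-x-1,-x]\}$, the spine contribution $e^{-\kappa V(w_{\tau_x})}$ to $W_{\tau_x}(\kappa)$ is already comparable to $e^{\kappa x}$, so the crux is to bound the off-spine contribution
\[
W_{\tau_x}(\kappa)-e^{-\kappa V(w_{\tau_x})}\;=\;\sum_{k=1}^{\tau_x}\sum_{u\in\Omega(w_k)}e^{-\kappa V(u)}\,W_{\tau_x-k}^{(u)}(\kappa)
\]
by a constant multiple of $e^{\kappa x}$ with positive probability. Each $W_m^{(u)}(\kappa)$ is an i.i.d.\ copy of the $\kappa$-additive martingale; in our non-boundary setting this martingale is not uniformly integrable (it converges $\P$-a.s.\ to $0$ since $\psi'(\kappa)>0$) and its $L^p$-norms generally diverge for $p>1$, so no elementary moment bound applies. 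The technical heart of the proof, deferred to Appendix \ref{App}, is to perform a suitable truncation of the sibling contributions that is compatible with the first-passage structure along the spine, and to combine the resulting restricted sums with the renewal-type asymptotics of Lemma \ref{Rfunctioncvg} and the moment condition in Assumption \ref{cond4} to obtain the required uniform lower bound.
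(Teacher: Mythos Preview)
Your upper bound is correct and in fact slightly slicker than the paper's: the paper decomposes $\{\tM\le -x\}$ according to the first passage below $-x$ and applies the Many-to-One Lemma, whereas your Doob inequality argument for the nonnegative martingale $W_n(\kappa)$ yields the same bound $e^{-\kappa x}$ in one line.

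The lower bound, however, is not a proof but an outline that stops at exactly the hard step. You correctly identify that the change-of-measure route reduces the question to showing $\Q^{\kappa,*}\bigl(W_{\tau_x}(\kappa)\le Ce^{\kappa x},\,V(w_{\tau_x})\in[-x-1,-x]\bigr)$ is bounded below uniformly in $x$, and that the obstacle is the off-spine sum $\sum_{k\le\tau_x}\sum_{u\in\Omega(w_k)}e^{-\kappa V(u)}W^{(u)}_{\tau_x-k}(\kappa)$. But you then defer this to Appendix~\ref{App}, and the appendix does \emph{not} contain that argument. The paper takes a different route for the lower bound: a Paley--Zygmund second-moment argument on the truncated count
\[
N_L(x)=\sum_{u\in\T}\ind{V(u)\in I(x),\,V(u)<\min_{\rho\le z<u}V(z)}\ind{\sum_{k=1}^{|u|}\sum_{z\in\Omega(u_k)}e^{-\kappa(V(z)+x)}\le L},
\]
showing $\E[N_L(x)]\asymp e^{-\kappa x}$ and $\E[N_L(x)^2]\lesssim L\,e^{-\kappa x}$, hence $\P(\tM\le -x)\ge \P(N_L(x)\ge 1)\ge \E[N_L(x)]^2/\E[N_L(x)^2]\gtrsim e^{-\kappa x}$ for fixed large $L$.

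Your spine approach is viable in principle, but the truncation you would need to carry out (controlling the sibling sums along the spine so that the off-spine mass stays of order $e^{\kappa x}$ with uniformly positive probability) is essentially the same analysis that appears inside the paper's first-moment estimate for $\E[N(x)-N_L(x)]$; you have not performed it. As written, the lower bound has a genuine gap.
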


The next Proposition considers the moments of the martingale limits $W_\infty$ and $D_\infty$; and will be used in the proof of the Key Lemma \ref{lem-highmoments-martlim} in this paper. Since its proof is similar and  simpler than that of Lemma \ref{lem-highmoments-martlim},
we defer it to Appendix \ref{App}.

\begin{prop}\label{Dcvg}
  Suppose that the assumptions \ref{cond1}, \ref{cond2},  and \ref{cond4} hold. For any $p \in (1,\kappa)$, the sequences $W_n$ and $D_n$ are $L^{p}$-bounded martingales.Moreover, each sequence converges in $L^p$ to its limit   at an exponential  rate.
 \end{prop}

In the following, we define 
\[
\tM_n := \inf_{|u|\le n} V(u), \  \forall n\ge0.
\]

\begin{lem}\label{lem-highmoments-martlim}
  Under the Assumptions \ref{cond1}, \ref{cond2}, \ref{cond4} for any $\delta  \in (0, \delta_0]$ with $\delta_0$ in Assumption \ref{cond4}   we have 
  \begin{equation}\label{eq-bound-W}
    \E[ W_{n} ^{\kappa+\delta} \ind{\mathbf{M}_n \geq -x } ] \leq C_{\eqref{eq-bound-W}}(\delta)    e^{\delta x }  \quad   \forall \, n\geq 1 , x \geq 0, 
  \end{equation}
with some constant $C_{\eqref{eq-bound-W}}(\delta)$ depending on $\delta, \kappa$ and the BRW (see \eqref{eq-kappa+delta<2}, \eqref{eq-bound-W-k+delta>2} and \eqref{eq-constant-induction}). Similarly,   for $\delta \in (0,  \delta_0)$,  we have     
\begin{equation}\label{eq-desired-bound-D}
   \E[ |D_{n}|^{\kappa+\delta} \ind{\mathbf{M}_n \geq -x} ] \leq C_{\eqref{eq-desired-bound-D}}(\delta) e^{\delta x } (1+x)^{\kappa+\delta}  \quad  \forall \, n \geq 1, x \geq 0  
\end{equation} 
with $C_{\eqref{eq-desired-bound-D}}(\delta)$ depending on $\delta, \kappa$ and the BRW (see \eqref{eq-D-bound-k+delta<2} and   \eqref{eq-D-bound-k+delta>2}).
\end{lem}

We are now ready to prove our first theorem using the Lemmas above, whose proofs will be presented in Section \ref{sec4-proof}.

\begin{proof}[Proof of Theorem \ref{OptimalStrategy} assuming Lemmas \ref{BRWroughbd} and \ref{lem-highmoments-martlim}]
  First, by Lemma \ref{BRWroughbd}, we immediately obtain
\[  \P(  \tM \leq - \ln x - z  ) \leq   e^{- \kappa \ln x - \kappa z} =e^{-\kappa z} x^{-\kappa} .\]
Observe that since $\mathbf{M} \leq 0$, $     \tM \geq -\ln x + z   $ holds only if $z \leq \ln x$. In this case 
 by use of Markov's inequality,  for any $\delta>0$ we have 
 \[
  \P( W_{\infty}>x,  \tM \geq - \ln x + z  ) \leq  x^{- (\kappa+\delta)}  \E[ W_{\infty} ^{\kappa+\delta} \ind{\mathbf{M}  \geq - (\ln x- z) } ].
 \] 
 It follows from \eqref{eq-bound-W} and Fatou's lemma that  
\[  \E[ W_{\infty} ^{\kappa+\delta} \ind{\mathbf{M}  \geq - (\ln x- z) } ] \leq \liminf_{n \to \infty} \E[ W_{n} ^{\kappa+\delta} \ind{\mathbf{M}_{n} \geq -(\ln x- z) } ] \leq   C_{\eqref{eq-bound-W}}(\delta)    e^{\delta (\ln x- z) }  \]
Setting $\delta= \kappa \wedge \delta_0$ and  combining the above  with   previous inequalities yields 
\[  \P( W_{\infty}>x,  |\tM + \ln x| > z  ) \leq  x^{-\kappa} \left( e^{-\kappa z}+  C_{\eqref{eq-bound-W}}(\kappa) e^{- (\kappa \wedge \delta_0)z } \right) .\]
The desired result follows by taking $C_{*}:= 1+ C_{\eqref{eq-bound-W}}(\kappa)$.
\end{proof}

\subsection{Conditional moments of martingale limits: proof of Lemma \ref{lem-highmoments-martlim}}  
We begin by establishing a key moment estimate for the associated random walk.

\begin{lem}\label{lem-Sum-RW-bound}
  Given $a >0$ and $p \geq 1$ satisfying $a p> (\kappa-1)$.  For any $x \geq 0$,
     \begin{equation}\label{eq-lem-Sum-RW-bound}
       \E \left[ \left( \sum_{k=0}^{\infty} e^{-  a [S_{k}^{(1)} +x]} 1_{\{ S_{k}^{(1)} \geq -x\}}   \right)^{p} \right] \leq C_{\eqref{eq-lem-Sum-RW-bound}}(a,p) e^{-(\kappa-1)x} . 
     \end{equation}
   \end{lem}

  \begin{proof}[Proof of Lemma \ref{lem-Sum-RW-bound}]

    For each $j \geq 0$, let $\mathtt{L}_{x,j}:= \sum_{n \geq 0} 1_{ \{ S_{n}^{(1)}+x \in [j,j+1) \} }$.  We have 
    \begin{equation*}
      \sum_{n=0}^{\infty}   e^{- a (S^{(1)}_{n}+x) }1_{\{  S^{(1)}_{n} +x  \geq 0 \}}  = \sum_{j \geq 0} \sum_{n=0}^{\infty}   e^{- a (S^{(1)}_{n}+x) }1_{\{  S^{(1)}_{n}+x \in [j,j+1)  \}} \leq \sum_{j \geq 0} e^{-a j} \mathtt{L}_{x,j}.
    \end{equation*}
    The  Minkowski  inequality yields that  
    \begin{equation}\label{eq-desired-bound-middble}
      \E \left[ \left( \sum_{n=0}^{\infty}   e^{- a(S^{(1)}_{n} +x)}1_{\{  S^{(1)}_{n} +x  \geq 0 \}}   \right)^{p}  \right]^{\frac{1}{p}} \leq   \sum_{j \geq 0} e^{-a j} \left( \E [  \mathtt{L}_{x,j}  ^{p} ] \right)^{\frac{1}{p}} .
    \end{equation} 
     We claim that   there exist a  constant $C_{\eqref{eq-L-x-j-p-moment}}(p)$   such that 
    \begin{equation}\label{eq-occupation-2}
      \begin{aligned}
        \E [  \mathtt{L}_{x,j}^{p} ] \leq  C_{\eqref{eq-L-x-j-p-moment}}(1)  C_{\eqref{eq-L-x-j-p-moment}}(p)  \,  e^{-k'(x-j)_{+}}  .
      \end{aligned} 
    \end{equation}
    Then substituting  \eqref{eq-occupation-2} into \eqref{eq-desired-bound-middble} we obtain that 
    \begin{align*}
      \E  \left[ \left( \sum_{n=0}^{\infty}   e^{-a(S^{(1)}_{n}+x) }1_{\{  S^{(1)}_{n} +x \geq 0 \}}   \right)^{p}  \right]  ^{\frac{1}{p}}
       & \leq   
       C_{\eqref{eq-L-x-j-p-moment}}(1)  C_{\eqref{eq-L-x-j-p-moment}}(p) \,    \left( 
        \sum_{0\le j \leq x} e^{-a j}  e^{-\frac{\kappa'}{p} (x-j)} +  \sum_{j \geq x} e^{-a j} \right) \\
        & \leq 3 e^{-\frac{\kappa'}{p} x}  C_{\eqref{eq-L-x-j-p-moment}}(1)  C_{\eqref{eq-L-x-j-p-moment}}(\kappa) \,  \sum_{j=0}^{\infty} e^{- (a-\frac{\kappa'}{p}) j }  .
    \end{align*}
 Taking $C_{\eqref{eq-lem-Sum-RW-bound}}(a,p) = [ 3\, C_{\eqref{eq-L-x-j-p-moment}}(1)  C_{\eqref{eq-L-x-j-p-moment}}(p) \,   \sum_{j \geq 0}e^{- j (a-{\kappa'}/{p})    }  ]^{p}$, the desired result follows.

    We now prove the claim \eqref{eq-occupation-2}. Let $T^{(1)}_{x,j}:= \inf\{n \geq 1: x+S^{(1)}_{n} \in [j,j+1) \}$. Then $\mathtt{L}_{x,j} >0$ if and only if $T^{(1)}_{x,j} < \infty$. By applying the strong Markov property at the stopping time $T^{(1)}_{x,j} $ we obtain that 
    \begin{align}
      & \E [  \mathtt{L}_{x,j}  ^{p} ]  = \E \left[    \left( \sum_{n \geq 0} 1_{ \{ x+S_{   n}^{(1)} \in [j,j+1) \} }  \right)^{p} ; T^{(1)}_{x,j} < \infty    \right] = \E \left[  \left( \sum_{n \geq 0} 1_{ \{ S_{ T^{(1)}_{x,j} + n}^{(1)} \in [j,j+1) \} } \right)^{p} ;  T^{(1)}_{x,j} < \infty    \right] \nonumber\\
      &= \int \E \left[  \left( \sum_{n \geq 0} 1_{ \{ S_{ T^{(1)}_{x,j} + n}^{(1)} \in [j,j+1) \} } \right)^{p} \,  \middle| \,
      T^{(1)}_{x,j} < \infty,  S_{T^{(1)}_{x,j} }= j + y  \right] 
      \mathbf{P}(T^{(1)}_{x,j}<\infty , S_{T^{(1)}_{x,j} }-j \in \d y ) \nonumber\\
      & \leq  \sup_{y \in [0,1]}   \E\left[  \left( \sum_{n \geq 0} 1_{ \{ S_{n}^{(1)} \in [-y,1-y) \} } \right)^{p} \right]
      \mathbf{P}(T^{(1)}_{x,j}<\infty  ) \leq  C_{\eqref{eq-L-x-j-p-moment}}(p)  \, 
      \mathbf{P}( \mathtt{L}_{x,j} \geq 1 ), \label{eq-L-x-j-p-moment}
    \end{align}
   where  $C_{\eqref{eq-L-x-j-p-moment}}(p):=  \E\left[  \left( \sum_{n \geq 0} 1_{ \{ S_{n}^{(1)} \in [-1,1] \} } \right)^{p} \right]< \infty$. Indeed to see that $C_{\eqref{eq-L-x-j-p-moment}}$ is finite, let 
    \begin{equation*}
      F=\sup\{ n \geq 0: |S_{n}^{(1)}| \leq 1\}. 
    \end{equation*}
     Then $ C_{\eqref{eq-L-x-j-p-moment}}(p) 
    \leq \E\left[ (1+F) ^{p} \right]=\sum_{n} (n+1)^{p} \mathbf{P}(F=n ) \leq  \sum_{n} (n+1)^{p} \mathbf{P}(|S_{n}^{(1)}|\leq 1 ) < \infty$. Indeed,  since 
    $\mathbf{P}(|S_{n}^{(1)}|\leq 1 ) \leq  \E[ e^{t-t S_{n}^{(1)} } ] = e^{t+n\psi(1+t)}$ we can select $t>0$ such that $\psi(1+t) < 0$, ensuring the summability of the series.

    Now it remains to bound $\mathbf{P}( \mathtt{L}_{x,j} \geq 1 )$. Assume that $0\le j\le x$. Applying a union bound, together with \eqref{rwschange}, we obtain that
    \begin{align*}
      \P( \mathtt{L}_{x,j} \geq 1 )&  \leq \sum_{n \geq 0} \P( S^{(1)}_{n} + x \in  [j,j+1) ) \leq \sum_{n \geq 0} \E[ e^{\kappa' S^{(\kappa)}_{n}} ;  S^{(\kappa)}_{n} + x \in  [j,j+1) ] \\
      & \leq e^{-\kappa'(x-j-1)} \sum_{n \geq 0} \P \left(   - S^{(\kappa)}_{n }  \in  [x-j,x-j+1)    \right)  .
    \end{align*}
For any $a>0$, by applying the strong Markov property at the stopping time $T_{a}^{(\kappa)}:= \inf\{ n \geq 0: -S^{(\kappa)}_{n}  \in (a-1,a]  \}$ and mimicking the argument \eqref{eq-L-x-j-p-moment} we obtain that 
     \begin{equation}
    \sup_{a>0} \sum_{n \geq 0}   \P \left(     -S^{(\kappa)}_{n} \in (a-1,a]  \right)  
     \leq   \E \left(  \sum_{n= 0}^{\infty}  1_{ \{ -S^{(\kappa)}_{n} \in [-1,1] \} }  \right)    =  C_{\eqref{eq-L-x-j-p-moment}}(1) < \infty . \label{eq-occupation-time-a-a+1}
    \end{equation}  
    For $j >x$, since  $C_{\eqref{eq-L-x-j-p-moment}}(1)  \geq 1$, it holds trivially that  $\P( \mathtt{L}_{x,j} \geq 1 ) \leq 1 \leq C_{\eqref{eq-L-x-j-p-moment}}(1)  e^{-\kappa'(x-j)_+} $.   This completes the proof.
  \end{proof}

\begin{proof}[Proof of \eqref{eq-bound-W}] 
  For  notational convenience, we set $\bar{W}_{n,x}:= \sum_{|u|=n }e^{-V(u)} \ind{\mathbf{M}_{n}  \geq -x}$, and  $a' := a - 1$ for any real number $a$.  
 Applying the spinal decomposition   (Proposition \ref{BRWchangeofp}) we have 
   \begin{align}
    & \E[ \bar{W}_{n, x} ^{\kappa+\delta}    ] \leq  \E_{\mathbf{Q^*}} [ \left( \bar{W}_{n , x}  \right) ^{\kappa+\delta'} 1_{\{\mathbf{M}_{n} \geq -x\} }  ] \notag \\
    & \leq  \E_{\mathbf{Q^*}}\left[ \left( \sum_{k=1}^{n} \sum_{z \in \Omega(w_{k}) } e^{-V(z)} 1_{\{  V(z) \geq -x\}} \bar{W}^{(z)}_{n-k , x+V(z) }  +  e^{-V(w_{n})} 1_{\{  V(w_{n}) \geq -x\}}  \right)^{\kappa+\delta'}\right] . \label{eq-boundd-W}
   \end{align}

   \underline{\textit{Case 1.}}
  Consider first the case $\kappa+\delta'\in (0,1]$. Define  $\mathcal{B}_{n}:=\sigma( V(w_{k}), \{V(z): z \in \Omega(w_{k})\}, 1 \leq k \leq n )$, where $\Omega(u)$ represents the brothers of $u$. 
  Applying the  inequality $ \E[X^{p} | \mathcal{G}] \leq \E[X| \mathcal{G}]^{p}$, valid for all $p\in (0,1]$ and  $X  \geq 0$,  to \eqref{eq-boundd-W},  then using the branching property and the fact that $\E [W_{n-k}]=1 $,
  we obtain that 
   \begin{align*} 
    &  \E[ \bar{W}_{n, x} ^{\kappa+\delta}    ]  \leq \E_{\Q^*} \left[ \E_{\Q^*} \left( \sum_{k=1}^{n}   \sum_{z \in \Omega(w_{k}) } e^{- V(z)} \ind{V(z)\geq -x}  W^{(z)}_{n-k}  +  e^{- V(w_{n})} 1_{\{ V(w_{n}) \geq -x\}}  \mid \mathcal{B}_{n} \right)^{\kappa+\delta'}  \right]  \\
     &=\E_{\Q^*} \left[  \left(  \sum_{k=1}^{n}   \sum_{z \in \Omega(w_{k}) } e^{- V(z)} \ind{V(z)\geq -x}    +  e^{- V(w_{n})}1_{\{  V(w_{n}) \geq -x \}} \right)^{\kappa+\delta'} \right] \leq  e^{(\kappa+\delta')x} \E_{\Q^*} \left[  \Sigma_{n,x} ^{\kappa+\delta' } \right],
    \end{align*}
    where for notational convenience we set $  \Delta_{k}  :=   1+ \sum_{z \in \Omega(w_{k}) } e^{-  \Delta V(z)}  $ and 
    \begin{equation}\label{eq-simga-n-x-1}
    \Sigma_{n,x}:=  \sum_{k=0}^{n}e^{- [V(w_{k})+x]} 1_{\{ V(w_{k}) \geq -x\}} \Delta_{k+1} . 
    \end{equation} 
  From the inequality $(\sum_{i} x_{i})^{p} \leq \sum_{i} x_{i}^{p}$, valid for all $p\in (0,1]$ and  $x_i  \geq 0$, we obtain that 
    \begin{align}
      \E_{\Q^*} \left[  \Sigma_{n,x} ^{\kappa+\delta' } \right] 
     & \leq   \sum_{k=0}^{n}\E_{\Q^*} \left[  e^{- (\kappa+\delta') [V(w_{k})+x]} 1_{\{ V(w_{k}) \geq -x\}} \Delta_{k+1}^{\kappa+\delta'}   \right]   \nonumber \\
       & \leq {   \mathbf{E}_{\Q^*} [ \Delta_{1}^{\kappa+\delta'} ]}    \sum_{k=0}^{n}\E_{\Q^*} \left[  e^{-  (\kappa+\delta')[ V(w_{k})+x]} 1_{\{ V(w_{k}) \geq -x\}}    \right] .  
    \end{align}  
Above,  we used the branching property which implies that $\mathbf{E}_{\Q^*} [ \Delta_{k+1}^{\kappa+\delta'}  \mid \mathcal{B}_{k} ]= \mathbf{E}_{\Q^*} [ \Delta_{1}^{\kappa+\delta'}    ] $. On the one hand,     by Assumption \ref{cond4}, we have 
\begin{equation}  
  \mathbf{E}_{\Q^*} [ \Delta_{1}^{\kappa+\delta'} ] \leq    \mathbf{E} [ (1+W_{1}) ^{\kappa+\delta}]   <\infty. 
\end{equation} 
On the other hand, it follows from Lemma \ref{lem-Sum-RW-bound}   that 
\[
      \sum_{k=0}^{n}\E_{\Q^*} \left[  e^{-  (\kappa+\delta') [V(w_{k})+x] } 1_{\{ V(w_{k}) \geq -x\}}    \right]   \leq C_{\eqref{eq-lem-Sum-RW-bound}}(\kappa+\delta',1)  e^{- \kappa' x}
\]
Combining the previous bounds, we deduce that  for any $ x\ge 0$, and $n \geq 1$
    \begin{equation}\label{eq-kappa+delta<2}
     \E[ \bar{W}_{n, x} ^{\kappa+\delta}    ] 
         \leq  \mathbf{E} [ (1+W_{1}) ^{\kappa+\delta}] C_{\eqref{eq-lem-Sum-RW-bound}}(\kappa+\delta',1)  e^{ \delta x} , 
    \end{equation}
    and we can just take $C_{\eqref{eq-bound-W}}(\delta) = \mathbf{E} [ (1+W_{1}) ^{\kappa+\delta}] C_{\eqref{eq-lem-Sum-RW-bound}}(\kappa+\delta',1) $ in the case $\kappa+\delta' \in (0,1]$. 
 \vspace{5pt}

\underline{\textit{Case 2.}} Now consider the case $\kappa+ \delta' >1$. Assume first that $\delta \in (0,1) \cap (0,\delta_0)$.  Applying  Minkowski's inequality $\E[ (\sum_{i} X_{i}) ^{p} |\mathcal{G}]\leq   ( \sum_{i} \E[ X_{i} ^{p} |\mathcal{G} ]^{1/p}  ) ^{p} $  to  \eqref{eq-boundd-W},  
   \begin{align}
    \E[ \bar{W}_{n, x} ^{\kappa+\delta}    ]  &\leq \E_{\Q^*} \left( \E_{\Q^*}\left[ \left( \sum_{k=1}^{n} \sum_{z \in \Omega(w_{k}) } e^{-V(z)} 1_{\{ V(z) \geq -x \}} W^{(z)}_{n-k,x+V(z)}   +  e^{-V(w_{n})} 1_{\{ V(w_{n}) \geq -x\}}  \right)^{\kappa+\delta'} \mid  \mathcal{B}_{n}\right] \right)   \notag\\
     & \leq \E_{\Q^*} \left[  \left( \sum_{k=1}^{n} \sum_{z \in \Omega(w_{k}) } e^{-V(z)} 1_{\{ V(z) \geq -x \}}  
       \left( \E  [  W_{n-k,x+V(z)}   ^{\kappa+\delta'} ]  \right)^{\frac{1}{\kappa+\delta' }  } +  e^{-V(w_{n})}1_{\{ V(w_{n}) \geq -x \}}  \right)^{\kappa+\delta'} \right] \notag \\
     & \leq   \E\left[ W_{\infty}^{\kappa+\delta'} \right]   e^{(\kappa+\delta') x} \E_{\Q^*} \left[ \Sigma_{n,x}^{\kappa+\delta'} \right].
     \label{eq-Minkowski-middle-bound} 
   \end{align} 
   where $\Sigma_{n,x} $ is defined in \eqref{eq-simga-n-x-1}. 
Note that, since $\delta\in (0,1)$, it follows from  Proposition \ref{Dcvg} that  $1 \leq \E\left[ W_{\infty}^{\kappa+\delta'} \right]<\infty$.  
By use of Lemma \ref{lem-p-moment} (noting that  $\Delta_{k+1}$ is independent to $\mathcal{B}_{k}$ and  has the same law as  $\Delta_{1}$) we have 
   \begin{align} 
    \E_{\Q^*} \left[  \Sigma_{n,x} ^{\kappa+\delta' } \right]  
     &\leq  (K [\kappa+\delta'])^{\kappa+\delta'} \E_{\mathbf{Q}^{*}} [ \Delta_{1}^{\kappa+\delta'} ]     
     \, \E_{\Q^*} \left[ \left( \sum_{k=0}^{n} e^{ -[V(w_{k}) +x] }1_{\{V(w_{k}) \geq -x \} } \right)^{\kappa+\delta'} \right] \notag \\
     & \leq  (K [\kappa+\delta'])^{\kappa+\delta'}   \E [ (1+W_{1})^{\kappa+\delta} ]  C_{\eqref{eq-lem-Sum-RW-bound}}(1,\kappa+\delta')  e^{- \kappa' x} \label{eq-innitial-bound-1} 
   \end{align}  
where $K$ is the absolute constant in Lemma \ref{lem-p-moment}, and we used $\E_{\mathbf{Q}^{*}} [ \Delta_{1}^{\kappa+\delta'} ] \leq \E [ (1+W_{1})^{\kappa+\delta} ] $, and applied  Lemma \ref{lem-Sum-RW-bound} to $a=1$, $p=\kappa+\delta'> \kappa'$.   
Combining with the previous bound, and using the fact that $\E_{\mathbf{Q}^{*}} [ \Delta_{1}^{\kappa+\delta'} ] \leq \E [ (1+W_{1})^{\kappa+\delta} ] $ we get 
   \begin{equation}\label{eq-bound-W-k+delta>2}
    \E[ \bar{W}_{n,x} ^{\kappa+\delta}   ]     \leq   { [K (\kappa+\delta') ]^{\kappa+\delta'} \E [ (1+W_{1})^{\kappa+\delta} ]    \E [    W_{\infty}  ^{\kappa+\delta'} ] } C_{\eqref{eq-lem-Sum-RW-bound}}(1,\kappa+\delta')  \, e^{ \delta x} . 
   \end{equation}    
   This completes the proof of \eqref{eq-bound-W} in the case where $\delta \in (0,1)\cap (0,\delta_0]$
and $\kappa+\delta' >1$.

   We  proceed by induction if $\delta_0>1$. Now, suppose that $\delta \in (1,\delta_0]$ and assume inductively that \eqref{eq-bound-W} holds for  $\delta'$. Now we apply Rosenthal's inequality (Lemma \ref{lem-Rosenthal-inequality}) to \eqref{eq-boundd-W}, since  conditionally on $ \mathcal{B}_{n}$, $\{ \bar{W}^{(z)}_{n-k,x+V(z)} , z \in \Omega_{k}: 1 \leq k \leq n\} $ are independent random variables, we get 
   \begin{align}
    & \E_{\Q^*}\left[ \left( \sum_{k=1}^{n} \sum_{z \in \Omega(w_{k}) } e^{-V(z)} 1_{\{ V(z) \geq -x \}} \bar{W}^{(z)}_{n-k,x+V(z)}   +  e^{-V(w_{n})} 1_{\{ V(w_{n}) \geq -x\}}  \right)^{\kappa+\delta'} \mid  \mathcal{B}_{n}\right] \notag \\
     & \leq [K (\kappa+\delta')]^{\kappa+\delta'}  e^{(\kappa+ \delta')x}  \bigg\{ \bigg( \sum_{k=1}^{n} \sum_{z \in \Omega(w_{k}) } e^{-[V(z)+x]} 1_{\{ V(z) \geq -x \}} 
      +  e^{-[V(w_{n})+x]}1_{\{ V(w_{n}) \geq -x \}} \bigg)^{\kappa+\delta}  \label{eq-induction-bound-1}\\
     &  \quad  +    \sum_{k=1}^{n} \sum_{z \in \Omega(w_{k}) } e^{-(\kappa+\delta') [V(z)+x]} 1_{\{ V(z) \geq -x \}} \E[ \bar{W}_{n-k,x+V(z)}^{\kappa+\delta'} ]
     +  e^{- (\kappa+\delta') [V(w_{n})+x]}1_{\{ V(w_{n}) \geq -x \}}  \bigg\} \label{eq-induction-bound-2}
   \end{align} 
   On the one hand, the summation inside the parentheses in \eqref{eq-induction-bound-1} is bounded above by $ \Sigma_{n,x} $ as defined in \eqref{eq-simga-n-x-1}.  We have just already estimated $  \E_{\Q^*} \left[  \Sigma_{n,x} ^{\kappa+\delta' } \right]$ in \eqref{eq-innitial-bound-1}.   

   On the other hand,  
   by the induction hypothesis we have $\E[ \bar{W}_{n-k,x+V(z)}^{\kappa+\delta'} ] \leq C_{\eqref{eq-bound-W}}(\delta')    e^{\delta' (x+V(z)) }$. Define $ \Delta_{k}(\kappa):= 1+ \sum_{z \in \Omega(w_{k})} e^{- \kappa \Delta V(z)}$. Then  the summation in \eqref{eq-induction-bound-2}  is bounded from above by  
\begin{align*}
&   C_{\eqref{eq-bound-W}}(\delta')    \sum_{k=1}^{n} \sum_{z \in \Omega(w_{k}) } e^{- \kappa [V(z)+x]} 1_{\{ V(z) \geq -x \}} 
  +  e^{- \kappa [V(w_{n})+x]}1_{\{ V(w_{n}) \geq -x \}}   \\
  & \leq   C_{\eqref{eq-bound-W}}(\delta')    \sum_{k=0}^{n}   e^{- \kappa [V(w_{k})+x]} 1_{\{ V(w_{k}) \geq -x \}} \Delta_{k+1}(\kappa)  =: C_{\eqref{eq-bound-W}}(\delta')   \Sigma_{n,x}(\kappa).
\end{align*} 
Moreover by using the branching property and Lemma \ref{lem-Sum-RW-bound} we obtain that 
\begin{align*}
    \E_{\Q^*} \left[  \Sigma_{n,x}(\kappa) \right]  
    &\leq    \E_{\Q^*}\left[   \Delta_{1}(\kappa)  \right] \E_{\Q^*} \left[ \sum_{k=0}^{n}   e^{- \kappa [V(w_{k})+x]} 1_{\{ V(w_{k}) \geq -x \}}  \right] \leq  \E [ (1+W_{1})^{\kappa+\delta} ] C_{\eqref{eq-lem-Sum-RW-bound}}(\kappa,1) e^{-\kappa' x} . 
\end{align*} 
 Above, we used that  by Assumption \ref{cond4}, $\E_{\mathbf{Q}^{*}} [ \Delta_{1}(\kappa) ]  \leq  \E  [ W_{1}(1+\sum_{|u|=1}e^{-\kappa V(u)} ) ] \leq \E [ W_{1}(1+ W_{1}^{\kappa}) ] =1+ \E [  W_{1}^{\kappa+1} ] \leq \E [ (1+  W_{1})^{\kappa+\delta} ]<\infty$ (recall that now $\delta\geq 1$). In conclusion we obtain that 
 \begin{align}
 & \E[ \bar{W}_{n, x} ^{\kappa+\delta}    ] \leq [K (\kappa+\delta')]^{\kappa+\delta'} e^{(\kappa+\delta')x}\left\{ \E_{\Q^*} \left[  \Sigma_{n,x} ^{\kappa+\delta' } \right] + C_{\eqref{eq-bound-W}}(\delta')  \E_{\Q^*} \left[ \Sigma_{n,x}(\kappa)\right] \right\} \notag \\
  & \leq (K [\kappa+\delta'])^{2(\kappa+\delta')}   \E [ (1+W_{1})^{\kappa+\delta} ]  \left[ C_{\eqref{eq-lem-Sum-RW-bound}}(1,\kappa+\delta')  +C_{\eqref{eq-bound-W}}(\delta')  C_{\eqref{eq-lem-Sum-RW-bound}}(\kappa,1)   \right]e^{\delta x} . \label{eq-constant-induction}
 \end{align}
 This completes the proof. 
  \end{proof}
 
  \begin{proof}[Proof of \eqref{eq-desired-bound-D}] In this proof for convenience we define 
    \begin{equation}\label{eq-sup-D-M-x}
      E_{\eqref{eq-sup-D-M-x}}(x) :=  \sup_{k \geq 1} \E \left[   |   D_{k} |^{\kappa+\delta} 1_{\{ \mathbf{M}_k \geq -x \}}  \right] .
    \end{equation} 

  \underline{\textit{Case 1.}}
 Consider first the case where $\kappa+\delta'\in (0,1]$.  Applying Lemma \ref{lem-Bahr-Esseen} to the martingale $M_{n} = D_{n}$ and the event $ A_{n}=\{ \mathbf{M}_{n} \geq -x  \} $,  we get 
\[    E_{\eqref{eq-sup-D-M-x}}(x)  \leq  4 \sum_{n=0}^{\infty}  \E \left[   |  D_{n+1} - D_{n} |^{\kappa+\delta} 1_{\{ \mathbf{M}_{n} \geq -x \}}  \right] . \]
  Note that  
\[   D_{n+1}- D_{n} = \sum_{|z|= n } e^{- V(z)}  (  -[V(z)+\psi'(1) n] (W^{(z)}_{1} -1 ) + D^{(z)}_{1} )  .\] Conditionally on $ \mathcal{F}_{n}$,  $ \{   -[V(z)+\psi'(1) n] (W^{(z)}_{1} -1 ) + D^{(z)}_{1} : |z|=n\}$ are i.i.d. centered random variables. Thus, by applying the classical Bahr-Esseen inequality, since  $\kappa+\delta \in (1,2]$,  we have
 \begin{align}
    \E \left[   |  D_{n+1} - D_{n} |^{\kappa+\delta}  \mid \mathcal{F}_{n} \right] 
  & \leq
     4 \sum_{|z|= n } e^{- (\kappa+\delta)V(z)}  \left( | V(z)+\psi'(1) n|^{\kappa+\delta}   \E | W_{1} -1 |^{\kappa+\delta} +  \E | D_{1}|^{\kappa+\delta}   \right) \nonumber\\
     & \leq  C_{\eqref{eq-mardiff-D-1}} \sum_{|z|= n } e^{-(\kappa+\delta) V(z)}  \left( 1+ | V(z)+\psi'(1) n|^{\kappa+\delta} \right) ,  \label{eq-mardiff-D-1}
 \end{align}
 where $C_{\eqref{eq-mardiff-D-1}}:= 4 \max\{\E | W_{1} -1 |^{\kappa+\delta},  \E | D_{1}|^{\kappa+\delta}\} $. Combining the previous bound with   the many-to-one Lemma \ref{ManytoOne} yields that 
 \begin{align*}
  E_{\eqref{eq-sup-D-M-x}}(x) &\leq C_{\eqref{eq-mardiff-D-1}} \sum_{n\geq 1} \E \left[ \sum_{|z|=n}e^{-(\kappa+\delta) V(z)}  \left( 1+ | V(z)+\psi'(1) n|^{\kappa+\delta} \right) 1_{\{ \mathbf{M}_{n} \geq -x \} }   \right] \\
  & \leq  C_{\eqref{eq-mardiff-D-1}}  2^{\kappa+\delta} (1+|\psi'(1)|^{\kappa+\delta})  \sum_{n \geq 0}  \E \left[ e^{- \delta S^{(\kappa)}_{n}}  \left(  | S^{(\kappa)}_{n} |^{\kappa+\delta}+ n^{\kappa+\delta}  \right) 1_{\{ S^{(\kappa)}_{n} \geq -x \} }  \right]  .
 \end{align*}
 We will partition the series into three parts and bound each part separately.
Set $t=(\kappa-1)/2>0$  so that  $\psi(\kappa-t)<0$. By applying the Markov inequality, we have, for any $p\geq 1$, 
 \begin{align}
  E^{I}(p) &:= \sum_{n \geq 0} \E \left[   \left(   | S^{(\kappa)}_{n}|^{p}+ n^{p} \right) 1_{\{ S^{(\kappa)}_{n} \geq 0\} }  \right] 
     \leq \sum_{n \geq 0} \sum_{j \geq 0} \left[  (j+1)^{p} + n^{p}  \right] \P(S^{(\kappa)}_{n} \in [j,j+1]) \nonumber\\
  & \leq  \sum_{n \geq 0} \sum_{j \geq 0} \left[  (j+1)^{p} + n^{p}  \right] e^{-t j + \psi(\kappa-t)n } =: C_{\eqref{eq-boundd-1}}(p)< \infty .  \label{eq-boundd-1}
\end{align}   
Moreover, by use of  \eqref{eq-occupation-time-a-a+1} we obtain that  
 \begin{align*}
 E^{II}(x) &:=  \sum_{n \geq 0}  \E \left[ e^{- \delta S^{(\kappa)}_{n}}    | S^{(\kappa)}_{n} |^{\kappa+\delta}   1_{\{ 0 \leq -S^{(\kappa)}_{n} \leq x \} }  \right] \leq   \sum_{1\leq j \leq x}e^{\delta j}   j^{\kappa+\delta}  \sum_{n \geq 0} \P(-S^{(\kappa)}_{n} \in (j-1,j] ) \\
 &  \leq   C_{\eqref{eq-L-x-j-p-moment}}(1)  \sum_{1\leq j \leq x} e^{\delta j}  j^{\kappa+\delta}  \leq  \frac{  C_{\eqref{eq-L-x-j-p-moment}}(1) }{1-e^{-\delta}}  \,  e^{\delta x} (1+x)^{\kappa+\delta} .
 \end{align*}  
 Finally, let $a>0$ satisfying $  a  
    \psi(\kappa-t) = -3 t $ (recall that $t=(\kappa-1)/2$). Again by applying  the  Markov inequality and  \eqref{eq-occupation-time-a-a+1}, 
      \begin{align}
       & E^{III}(x)  := \sum_{1\leq j \leq x} e^{\delta j}  \sum_{n \geq 0} n^{\kappa+\delta} \P(-S^{(\kappa)}_{n} \in (j-1,j] ) \nonumber \\
        & \leq  \sum_{1\leq j \leq x} e^{\delta j}  \sum_{n \geq a x}  n^{\kappa+\delta}  e^{t j}  e^{  \psi(\kappa-t)n}  + \sum_{1\leq j \leq x} e^{\delta j}  [1+ax ]^{\kappa+\delta} \sum_{0\le n \leq ax}   \P(-S^{(\kappa)}_{n} \in (j-1,j] ) \nonumber \\
        & \leq \frac{  e^{\delta x} }{1-e^{-\delta}}    e^{-tx}  \sum_{n\geq 0 }
        (ax+n)^{\kappa+\delta} e^{\psi(\kappa-t)n}   +  C_{\eqref{eq-L-x-j-p-moment}}(1) \,\frac{  [1+ax]^{\kappa+\delta} e^{\delta x} }{1-e^{-\delta}}   \leq C_{\eqref{eq-boundd-3}} e^{\delta x} (1+x)^{\kappa+\delta} , \label{eq-boundd-3}
      \end{align} 
where   $C_{\eqref{eq-boundd-3}}:= \frac{1}{1-e^{-\delta}}[\sup_{x\geq 0}  e^{-tx}  \sum_{n\geq a x } (n+ax)^{\kappa+\delta}e^{\psi(\kappa-t)n} + C_{\eqref{eq-L-x-j-p-moment}}(1)  a^{\kappa+\delta}]$. 
 Combining all previous bound     we finally conclude that 
 \begin{align}
  E_{\eqref{eq-sup-D-M-x}}(x)  &\leq C_{\eqref{eq-mardiff-D-1}}  2^{\kappa+\delta} (1+|\psi'(1)|^{\kappa+\delta}) \left[   E^{I}(x)+ E^{II}(x)+ E^{III}(x) \right] \notag \\
  &\leq C_{\eqref{eq-mardiff-D-1}}  2^{\kappa+\delta} (1+|\psi'(1)|^{\kappa+\delta}) \left[    C_{\eqref{eq-boundd-1}}(p)+\frac{  C_{\eqref{eq-L-x-j-p-moment}}(1) }{1-e^{-\delta}} + C_{\eqref{eq-boundd-3}}  \right]  (1+x)^{\kappa+\delta} e^{\delta x} . \label{eq-D-bound-k+delta<2}
 \end{align}

 \underline{\textit{Case 2.}} 
 Consider now the case $\kappa+\delta>2$.   
 We divide the proof into two steps, establishing the following two assertions.  
 \begin{enumerate}[(i)]
   \item There exists constants $A\geq 1$ and  $C_{\eqref{eq-bound-badname-3}} $ both depending on  $\delta, \kappa$ and the BRW such that  \begin{equation*}
      \sum_{n \geq  Ax} \E[ |D_{n+1} -D_{n}| ^{\kappa+\delta} 1_{\{\mathbf{M}_{n} \geq -x\} } ]^{\frac{1}{\kappa+\delta}} \leq  C_{\eqref{eq-bound-badname-3}}  \quad \forall \ x \geq 0.
   \end{equation*}
   \item For any $\mathfrak{m}\ge 1$, there exists a constant $C_{\eqref{eq-boundd-badname-1}}$ depending on  $\delta, \kappa$ and the BRW  such that 
     \begin{equation*}
     \E[ | D_{\mathfrak{m}}| ^{\kappa+\delta} 1_{\{\mathbf{M}_{\mathfrak{m}} \geq -x\} } ] \leq C_{\eqref{eq-boundd-badname-1}} +  ( 2 K \max\{\mathfrak{m},x\})^{\kappa+\delta} C_{\eqref{eq-bound-W}}(\delta) e^{\delta x} 
   \quad  \forall \ x \geq 0.
   \end{equation*} 
 \end{enumerate} 
 Now by applying Minkowski’s inequality and the fact that $(x+y)^p \leq 2^{p}(x^p+y^p)$ for $p, x, y \geq 0$, we  obtain the desired bound:
 \begin{align}
  E_{\eqref{eq-sup-D-M-x}}(x) 
  &  \leq  \left( \sum_{n \geq Ax } \E[ |D_{n+1} -D_{n}| ^{\kappa+\delta} 1_{\{\mathbf{M}_{n} \geq -x\} } ]^{\frac{1}{\kappa+\delta}}  + \sup_{\mathfrak{m}\le Ax}\E[ | D_{\mathfrak{m}}| ^{\kappa+\delta} 1_{\{\mathbf{M}_{\mathfrak{m}} \geq -x\} } ]^{\frac{1}{\kappa+\delta}}  \right)^{\kappa+\delta}  \nonumber\\
  & \leq 2^{\kappa+\delta}\left( C_{\eqref{eq-bound-badname-3}}^{\kappa+\delta}  + C_{\eqref{eq-boundd-badname-1}} +  ( 2 K A)^{\kappa+\delta} C_{\eqref{eq-bound-W}} (1+x)^{\kappa+\delta} e^{\delta x} \right)  \label{eq-D-bound-k+delta>2}. 
 \end{align}

 We begin by proving assertion (ii).  By employing the change of measure given by  $\frac{\d \mathbf{Q} }{\d \P} |_{\mathcal{F}_{\mathfrak{m}}}= W_{\mathfrak{m}}$ we rewrite 
 \begin{equation}\label{eq-power-D-1}
   \E \left[ |D_{\mathfrak{m}}|^{\kappa+\delta} 1_{\{\mathbf{M}_{ \mathfrak{m}} \geq -x \}} \right] = \E_{\mathbf{Q}   } \left[ \left| \frac{ D_{\mathfrak{m}}}{  W_{\mathfrak{m}} } \right|^{\kappa+\delta} |W_{\mathfrak{m}}|^{\kappa+\delta'} 1_{\{\mathbf{M}_{ \mathfrak{m}} \geq -x \}} \right] .
 \end{equation}
 By the spine decomposition (Proposition \ref{BRWchangeofp}), we have
 \begin{align*}
   \frac{ D_{\mathfrak{m}}}{  W_{\mathfrak{m}} } &= -\sum_{|u|= \mathfrak{m} }\frac{e^{-V(u)}}{W_{\mathfrak{m}} }   \left( V(u)+ \psi'(1)|u| \right)   =  \mathbf{E}_{\mathbf{Q}^*  }  \left[  - \left( V(w_{\mathfrak{m}})+ \psi'(1)\mathfrak{m} \right)  \mid \mathcal{F}_{ \mathfrak{m}} \right].
 \end{align*}
 Substituting this conditional expectation into \eqref{eq-power-D-1} and applying Jensen’s inequality yields 
 \begin{align}
 \E \left[ |D_{\mathfrak{m}}|^{\kappa+\delta} 1_{\{\mathbf{M}_{\mathfrak{m}} \geq -x \}}  \right]
    &  \leq \E_{\mathbf{Q}^* } \left[    |V(w_{\mathfrak{m}}) +\psi'(1) \mathfrak{m}  |^{\kappa+\delta}   |W_{\mathfrak{m}}|^{\kappa+\delta'} 1_{\{\mathbf{M}_{\mathfrak{m}} \geq -x \}} \right] \notag \\
   & \leq  \E_{\mathbf{Q}^* } \left[     |V(w_{\mathfrak{m}}) +\psi'(1) \mathfrak{m}  |^{\kappa+\delta}  1_{ \{ | V(w_{\mathfrak{m}})| \geq K (\mathfrak{m} \vee x)\} }   |W_{\mathfrak{m}}|^{\kappa+\delta'}  1_{\{\mathbf{M}_{\mathfrak{m}} \geq -x \}}      \right] \notag\\
   &\qquad + \left( 2 K (\mathfrak{m}\vee x) \right)^{\kappa+\delta} \E_{\mathbf{Q} } \left[      |W_{\mathfrak{m}}|^{\kappa+\delta'}  1_{\{\mathbf{M}_{\mathfrak{m}} \geq -x \}}   \right], \label{eq-assertion-2-1}
 \end{align}
 where $K \geq  1+|\psi'(1)|$ is a large constant to be specified later.  By applying inequality \eqref{eq-bound-W} to the last term  we have 
 \begin{equation}\label{eq-assertion-2-2}
  \left( 2 K (\mathfrak{m}\vee x) \right)^{\kappa+\delta} \E_{\mathbf{Q} } \left[      |W_{\mathfrak{m}}|^{\kappa+\delta'}  1_{\{\mathbf{M}_{\mathfrak{m}} \geq -x \}}      \right]  \leq   \left( 2 K (\mathfrak{m}\vee x) \right)^{\kappa+\delta} C_{\eqref{eq-bound-W}}(\delta) e^{\delta x}.
 \end{equation}   
 We now show that the first term is negligible. 
 {Set $p_0=p_0(\delta)>1$ such that $1+ (\kappa+\delta')p_0< \kappa+\delta_0$. Such $p_0$ exists since $\delta<\delta_0$.} By using   H\"{o}lder's inequality with   $q_0^{-1}:=1-p_0^{-1}$ we get that 
 \begin{align*}
   & \E_{\mathbf{Q}^* } \left[    |V(w_{\mathfrak{m}}) +\psi'(1) \mathfrak{m}  |^{\kappa+\delta} 1_{ \{ | V(w_{\mathfrak{m}})| \geq K (\mathfrak{m}\vee x) \} }   |W_{\mathfrak{m}}|^{\kappa+\delta'}  1_{\{\mathbf{M}_{\mathfrak{m}} \geq -x \}}      \right]   \\
   & \leq (1+ |\psi'(1)|)^{\kappa+\delta} \E_{\mathbf{Q}^* } \left[      ( |V(w_{\mathfrak{m}})| + \mathfrak{m}  )^{(\kappa+\delta)q_0} 1_{ \{  V(w_{\mathfrak{m}}) \geq K (\mathfrak{m}\vee x) \} } \right]^{\frac{1}{q_0}}  
    \E_{\mathbf{Q}^* } \left[  |W_{\mathfrak{m}}|^{(\kappa+\delta')p_0}  1_{\{\mathbf{M}_{\mathfrak{m}} \geq -x \}}      \right]^{\frac{1}{p_0}} \\
    &=  (1+ |\psi'(1)|)^{\kappa+\delta}  \E \left[  (| S^{(1)}_{\mathfrak{m}}|  + \mathfrak{m}   )^{(\kappa+\delta)q_0} 1_{ \{  S^{(1)}_{\mathfrak{m}}  \geq K( \mathfrak{m}\vee x) \} } \right]^{\frac{1}{q_0}}  
    \E \left[  |W_{\mathfrak{m}}|^{1+(\kappa+\delta')p_0}  1_{\{\mathbf{M}_{\mathfrak{m}} \geq -x \}}      \right]^{\frac{1}{p_0}} .
 \end{align*} 
Since we already choose $1+ (\kappa+\delta')p_0< \kappa+\delta_0$,  it follows again from \eqref{eq-bound-W} that   
 \[  \E \left[  |W_{\mathfrak{m}}|^{1+(\kappa+\delta')p_0}  1_{\{\mathbf{M}_{\mathfrak{m}} \geq -x \}}  \right]^{\frac{1}{p_0}}   \leq C_{ \eqref{eq-bound-W}}( p_{1}(\delta) ) e^{ (\kappa+\delta) (\mathfrak{m}\vee x) } . \] 
 where $ p_{1}(\delta) := 1+ (\kappa+\delta')p_0(\delta) - \kappa \leq (\kappa+ \delta)p_{0}(\delta)$. 
  Moreover according to Assumption \ref{cond4}, we can choose $t \in(0,1)$   with  $\psi(1-t) \in (0,\infty)$ 
  and then set   $K = \frac{1}{t}(\psi(1-t)+(\kappa+\delta)+1)$. Then by using of Markov's inequality we obtain that 
  \begin{align*}
     \E \left[  \left(  | S^{(1)}_{\mathfrak{m}} | +  \mathfrak{m}  \right)^{(\kappa+\delta)q_0} 1_{ \{S^{(1)}_{\mathfrak{m}} \geq K (\mathfrak{m} \vee x) \} } \right] 
    & \leq \sum_{l \geq K (\mathfrak{m}\vee x)}  (l + 1+ \mathfrak{m} )^{(\kappa+\delta)q_0} \P( S^{(1)}_{\mathfrak{m}} \in [l,l+1] )  \\ 
       & \leq      \sum_{l \geq K( \mathfrak{m}\vee x)} e^{-t l}  (l+ 1+\mathfrak{m} )^{(\kappa+\delta)q_0} e^{\psi(1-t) \mathfrak{m}   }  .
    \end{align*}
 Making a change of variable $l=K( \mathfrak{m}\vee x)+ \ell $, we see that the summation is bounded from above by  
\[
 (K+1)^{(\kappa+\delta)q_0}(\mathfrak{m}\vee x)^{(\kappa+\delta)q_0} e^{-(Kt-\psi(1-t))( \mathfrak{m} \vee x)  }   \sum_{\ell \geq 0} e^{-t \ell}  (\ell+1)^{(\kappa+\delta)q_0} . \] 
 Let $ C_{\eqref{eq-boundd-badname-1}}:=C_{ \eqref{eq-bound-W}}(p_{1}(\delta))  
 (K+1)^{(\kappa+\delta)q_0}\sup_{y \geq 1} y^{(\kappa+\delta)q_0} e^{-y}   \sum_{l \geq 0} e^{-t l}  (l+1)^{(\kappa+\delta)q_0}$. Integrating the preceding bounds, by our choice of $K$, we conclude that  
    \begin{equation}
       \E_{\mathbf{Q}^* } \left[    |V(w_{\mathfrak{m}}) +\psi'(1) \mathfrak{m}  |^{\kappa+\delta} 1_{ \{ | V(w_{\mathfrak{m}})| \geq K( \mathfrak{m} \vee x) \} }   |W_{\mathfrak{m}}|^{\kappa'+\delta}  1_{\{\mathbf{M}_{\mathfrak{m}} \geq -x \}}      \right] \leq C_{\eqref{eq-boundd-badname-1}},  \label{eq-boundd-badname-1}
    \end{equation}
Our desired assertion (ii) then follows from \eqref{eq-assertion-2-1}, \eqref{eq-assertion-2-2} and \eqref{eq-boundd-badname-1}.
\vspace{5pt}

 It remains to prove assertion (i). 
 We once again make use of the decomposition
\[   D_{n+1}- D_{n} = \sum_{|z|= n } e^{- V(z)}  \left[  -[V(z)+\psi'(1) n] (W^{(z)}_{1} -1 ) + D^{(z)}_{1} \right] ; \] 
along with the fact that,  conditionally on $ \mathcal{F}_{n}$,  $ \left\{   -[V(z)+\psi'(1) n] (W^{(z)}_{1} -1 ) + D^{(z)}_{1} \right\}$ are independent centered random variables. Applying the Marcinkiewicz-Zygmund inequality \eqref{sumindep}, we see that $ \E[ |D_{n+1}- D_{n}|^{\kappa+\delta} \mid \mathcal{F}_{n} ]$ is bounded from above by 
\[ C_{\eqref{sumindep}}(\kappa+\delta) \E \left[  \left( \sum_{|z|= n } e^{- 2V(z)}  \left[  -[V(z)+\psi'(1) n] (W^{(z)}_{1} -1 ) + D^{(z)}_{1} \right]^{2} \right)^{\frac{\kappa+\delta}{2}} \mid \mathcal{F}_{n} \right] \]
By using the inequalities     $(\sum_{i} a_{i} x_{i} )^{p} \leq (\sum_{i} a_{i}  )^{p-1}  (\sum_{i} a_{i} x_{i}^{p} )  $ and  $\sum_{i} x_{i}^{2} \leq (\sum_{i} x_{i})^{2}$ for $x_{i} \geq 0$, valid for $a_{i}, x_{i} \geq 0$ and $p\geq 1$, we can further bound the above by 
  \begin{align} 
  & C_{\eqref{sumindep}}(\kappa+\delta)  \left( \sum_{|z|= n } e^{- 2V(z)} \right)^{\frac{\kappa+\delta}{2}-1} \E \left[    \sum_{|z|= n } e^{- 2 V(z)}  \Big| [V(z)+\psi'(1) n] (W^{(z)}_{1} -1 ) - D^{(z)}_{1} \Big|^{\kappa+\delta}   \mid \mathcal{F}_{n} \right] \nonumber\\
   & \leq   C_{\eqref{eq-bound-badname-2}}  \left( \sum_{|z|= n } e^{- V(z)} \right)^{\kappa+\delta-2}   \sum_{|z|= n } e^{- 2 V(z)}  \left( |V(z)+\psi'(1) n|  + 1 \right)^{\kappa+\delta}   \label{eq-bound-badname-2}
  \end{align}
 where { $C_{\eqref{eq-bound-badname-2}} :=  C_{\eqref{sumindep}}(\kappa+\delta) 2^{\kappa+\delta} (\E[ |W_{1}-1|^{\kappa+\delta}] + \E[|D_1|^{\kappa+\delta}]) $} and we used the branching property in the last inequality.

It follows from the upper bound above of $\E[ |D_{n+1} -D_{n}| ^{\kappa+\delta} \mid \mathcal{F}_n ]$ and the spine decomposition (Proposition \ref{BRWchangeofp}) that $\frac{1}{C_{\eqref{eq-bound-badname-2}}} \E[ |D_{n+1} -D_{n}| ^{\kappa+\delta} 1_{\{\mathbf{M}_{n} \geq -x\} } ] $ is bounded from above by 
\begin{align*}
  &  \E \left[    W_{n}^{\kappa+\delta-2}  \sum_{|z|= n } e^{- 2 V(z)}  \left( |V(z)+\psi'(1) n|  + 1 \right)^{\kappa+\delta}   1_{\{\mathbf{M}_{n} \geq -x\} } \right] \\
  & =\E_{\mathbf{Q}^*} \left[   e^{-  V(w_{n})}  \left( |V(w_{n})+\psi'(1) n|  + 1 \right)^{\kappa+\delta}    W_{n}^{\kappa+\delta-2} 1_{\{\mathbf{M}_{n} \geq -x\} } \right] .\\
\end{align*}
Now by use of  Holder inequality with exponent $p=\kappa+\delta'$ and $q=\frac{\kappa+\delta'}{\kappa'+\delta'}$,  we can further bound the above by 
 \begin{align*}  
  &  \E_{\mathbf{Q}^*}  \left[   e^{- (\kappa+\delta') V(w_{n})}  \left( |V(w_{n})+\psi'(1) n|  + 1 \right)^{(\kappa+\delta)(\kappa+\delta')} 1_{\{V(w_{n}) \geq -x\}} \right] ^{\frac{1}{\kappa+\delta'}}
   \E_{\mathbf{Q}^*} \left[    W_{n}^{\kappa+\delta'} 1_{\{\mathbf{M}_{n} \geq -x\} } \right]^{ \frac{\kappa'+\delta'}{\kappa+\delta'}} \\
   &=   \E \left[   e^{- \delta S^{(\kappa)}_{n}}  \left( |S^{(\kappa)}_{n}+\psi'(1) n|  + 1 \right)^{(\kappa+\delta)(\kappa+\delta')} 1_{\{S^{(\kappa)}_{n} \geq -x\}} \right]   ^{\frac{1}{\kappa+\delta'}}
    \E \left[    W_{n}^{\kappa+\delta} 1_{\{\mathbf{M}_{n} \geq -x\} } \right]  ^{ \frac{\kappa'+\delta'}{\kappa+\delta'}},
 \end{align*}
 where  we used  that  $\{V(w_{n}), \mathbf{Q}^* \} \overset{\textrm{law}}{=}\{S^{(1)}_{n}, \P \}$ and $\E[ e^{-\kappa' S^{(1)}_{n}} f(S^{(1)}_{n}) ]= \E[f(S^{(\kappa)}_{n})]$.  
 Again, thanks to \eqref{eq-bound-W}, there holds $\E \left[    W_{n}^{\kappa+\delta} 1_{\{\mathbf{M}_{n} \geq -x\} } \right] \leq C_{\eqref{eq-bound-W}} (\delta) e^{\delta x}$. Thus, denoting $p_\delta:=(\kappa+\delta)(\kappa'+\delta)$, we obtain that  $ \sum_{n \geq Ax} \E[ |D_{n+1} -D_{n}| ^{\kappa+\delta} 1_{\{\mathbf{M}_{n} \geq -x\} } ]^{\frac{1}{\kappa+\delta}}$ is bounded from above by 
 \[{ C_{\eqref{eq-bound-badname-2}} C_{\eqref{eq-bound-W}}(\delta)  } 
  e ^{  \frac{\kappa'+\delta'}{\kappa'+\delta} \frac{\delta}{\kappa+\delta} x } \, \sum_{n \geq Ax}
    \E \left[   e^{- \delta S^{(\kappa)}_{n}}  \left( |S^{(\kappa)}_{n}|+ n\right)^{p_\delta} 1_{\{S^{(\kappa)}_{n} \geq -x\}} \right]   ^{\frac{1}{p_{\delta}}} .\] 
 Set $t= \frac{\kappa-1}{2}>0$ so that $\psi(\kappa-t)<0$. For any $p \geq 1$ by use of  the inequality $(\sum_{i} x_{i})^{1/p} \leq \sum_{i} (x_{i})^{1/p} $ for $x_{i} \geq 0$  and   Markov's inequality, it follows that 
   \begin{align*}
    & \sum_{n \geq Ax}  \E \left[   e^{- \delta S^{(\kappa)}_{n}}  \left( |S^{(\kappa)}_{n}|+ n   \right)^{p}  1_{\{S^{(\kappa)}_{n} 
    \geq -x\}} \right] ^{\frac{1}{p}} 
     \leq e^{\frac{\delta}{p} x}  \sum_{n \geq Ax}  \E   \left[   \left( |S^{(\kappa)}_{n}|+ n   \right)^{p}  1_{\{S^{(\kappa)}_{n} \geq -x\}}   \right] ^{\frac{1}{p}} \\
    & \leq e^{\frac{\delta}{p} x}  \sum_{n \geq Ax} \left( \sum_{j \geq 0} (n+j) \P(S^{(\kappa)}_{n} \in [j,j+1] )^{\frac{1}{p}}  + \sum_{0 \leq j \leq x} (n+x)  \P(-S^{(\kappa)}_{n} \in [j,j+1] )^{\frac{1}{p}} \right) \\
    & \leq e^{\frac{\delta}{p} x} \left( \sum_{n \geq Ax}  \sum_{j \geq 0} (n+j) e^{ \frac{1}{p}[  \psi(\kappa-t)n - t j ] } + \sum_{n \geq Ax}  \sum_{0 \leq j \leq x} (n+j)  e^{ \frac{1}{p}[  \psi(\kappa-t)n + t j ] } \right).  
     \end{align*}  
     Making change of variable $n- Ax + l$, the above expression can be further upper bounded by
\begin{align*}
  & e^{\frac{\delta}{p} x+ \frac{\psi(\kappa-t)}{p}Ax}  \left(  \sum_{l \geq 0}  \sum_{j \geq 0} (l+j+Ax) e^{ \frac{1}{p}[  \psi(\kappa-t)l - t j ] } + \sum_{l \geq 0}  (1+x)   [ l+(A+1)x ]   e^{ \frac{1}{p}[  \psi(\kappa-t)l + t x ] }   \right)  \\
  &\leq    
  2 e^{\frac{\delta}{p} x+ \frac{\psi(\kappa-t)}{p}Ax} (1+2Ax)^2 e^{\frac{t}{p} x} \sum_{l \geq 0}  \sum_{j \geq 0} (l+j+1) e^{ \frac{1}{p}[  \psi(\kappa-t)l - t j ] }. 
\end{align*}
Choose $A>0$ to be  sufficiently large so that $   \frac{1}{p_{\delta}}[  \psi(\kappa-t) A + t +\delta ] + \frac{\kappa'+\delta'}{\kappa'+\delta} \frac{\delta}{\kappa+\delta} \leq -1 $. Then we get 
\begin{align}
  & \sum_{n \geq Ax} \E[ |D_{n+1} -D_{n}| ^{\kappa+\delta} 1_{\{\mathbf{M}_{n} \geq -x\} } ]^{\frac{1}{\kappa+\delta}}  \nonumber \\
   & \leq  2  C_{\eqref{eq-bound-badname-2}} C_{\eqref{eq-bound-W}}    \sum_{l \geq 0}  \sum_{j \geq 0} (l+j+1) e^{ \frac{1}{p_{\delta}}[  \psi(\kappa-t)l - t j ] } \,\sup_{x \geq 0}  (1+2Ax)^2  e ^{-x } =: C_{\eqref{eq-bound-badname-3}} < \infty ,  \label{eq-bound-badname-3}
\end{align} 
from which the assertion (ii) follows. We now complete the proof.    
\end{proof}

\subsection{Tail probability of the global minimum: proof of Lemma \ref{BRWroughbd}}

\begin{proof}[Proof of Lemma \ref{BRWroughbd}]
  The upper bound is relatively easy. Observe that $\{\tM\leq -x\}$ implies that $\{\sum_{u\in\T}\ind{V(u)\leq -x<\min_{\rho<v<u}V(v)}\geq1\}$.Applying Markov’s inequality and then the many-to-one Lemma \ref{ManytoOne}, we obtain that 
  \begin{align*}
  \P(\tM\leq -x) &\leq \E\left[\sum_{k\geq1}\sum_{|u|=k}\ind{V(u)\leq -x<\min_{\rho<v<u}V(v)}\right]=\sum_{k\geq1}\E\left[e^{\kappa S^{(\kappa)}_k}; S^{(\kappa)}_k\leq -x < \min_{1\leq i\leq k-1}S^{(\kappa)}_i\right]\\
  &\leq  e^{-\kappa x} \sum_{k\geq 1}\P(S^{(\kappa)}_k\leq -x < \min_{1\leq i\leq k-1}S^{(\kappa)}_i) =e^{-\kappa x}.
  \end{align*}
  
  Fix $x \geq 0$. For the lower bound, let us introduce the following events for any $u\in\T$:
  \[
  E_u:= \left\{ V(u)\in I(x), V(u)< \min_{\rho\leq z<u}V(z) \right\} , \, \text{ and }\, F^L_u:=\left\{\sum_{k=1}^{|u|}\sum_{z\in\Omega(u_k)}e^{-\kappa(V(z)+x)}\leq L\right\}, \forall L\geq1 .
  \]
  Define 
  \[
  N(x):=\sum_{u\in\T}\ind{E_u}, \text{ and }  N_L(x):=\sum_{u\in\T}\ind{E_u}\ind{F^L_u}.
  \]
  Then it follows from the Paley-Zygmund inequality that 
  \begin{equation}\label{PaleyZygmund}
  \P(\tM\leq -x)\geq \P(N_L(x)\geq1)\geq \frac{\E[N_L(x)]^2}{\E[N_L(x)^2]}.
  \end{equation}
  We first  estimate the moments of $N_L(x)$. Note that by many-to-one Lemma \ref{ManytoOne},
  \begin{align*}
  \E[N(x)]
  =\sum_{n\geq1}\E\left[\sum_{|u|=n}\ind{V(u)<  \min\limits_{\rho\leq v<u}V(v), V(u)\in I(x)}\right]   =\sum_{n\geq1}\E\left[e^{\kappa S^{(\kappa)}_n}\ind{S^{(\kappa)}_n<\mS^{(\kappa)}_{[0,n-1]}, S^{(\kappa)}_n\in I(x)}\right] .
  \end{align*}
Using the notation in Section \ref{Renewaltheory},  we obtain that 
\[  e^{-\kappa x-\kappa}U^{(\kappa),-}_s([x,x+1))\leq \E[N(x)]\leq e^{-\kappa x}U^{(\kappa),-}_s([x,x+1)).\]
  Consequently, by \eqref{eq-renewal-bound}, there exists constants $0<c_4<c_5<\infty$ such that for any $x\geq0$,
  \begin{equation}\label{meanNx}
  c_4 e^{-\kappa x} \leq \E[N(x)]\leq c_5 e^{-\kappa x}.
  \end{equation}

Next we will bound the expectation of  $N_L^{c}(x):= N(x)-N_{L}(x)= \sum_{u\in\T}\ind{E_u}\ind{(F^L_u)^c}$.  In fact, by   part (ii) of Proposition \ref{BRWchangeofp}  for $\Q^{\kappa,*}$, we can rewrite  $\E\left[ N_L^{c}(x) \right]=\E\left[\sum_{u\in\T}\ind{E_u}\ind{(F^L_u)^c}\right]$ as 
  \begin{align*}
   \sum_{n\geq1}  \E_{\mathbf{Q}^{\kappa,*}}
  \left[e^{\kappa V(w_n)}\ind{V(w_n)< \underline{V}(w_{[0,n-1]}), V(w_n)\in I(x)}\ind{ (F^L_{w_{n}})^{c} }\right] .
  \end{align*}
  Take an absolute constant $c>0$ such that $\sum_{k=1}^{\infty}\frac{e c}{k^2}\leq 1$. Since $L\geq \sum_{k=1}^n\frac{e c L}{ k^2}=\sum_{k=1}^n\frac{e c L}{(n-k+1)^2}$, on the event $\{ (F^L_{w_{n}})^{c} \}$ there must exists some $k$ such that $\sum_{u\in\Omega(w_k)}e^{-\kappa(V(u)+x)}>\frac{e c L}{(n-k+1)^2} $. If in addition $V(w_{n}) \in I(x)$,  this implies that $    e^{-\kappa [V(w_{k-1})-V(w_{n})] }\Delta_k(\kappa) \geq \frac{cL  }{(n-k+1)^2}$, where 
  $\Delta_k^{(\kappa)}:=1+\sum_{u\in\Omega(w_k)}e^{-\kappa \Delta V(u)}$.  
  Therefore, $\E\left[ N_L^{c}(x) \right]$ is bounded from above by 
  \begin{align*}
   e^{-\kappa x} \sum_{n=1}^\infty\sum_{k=1}^n  \mathbf{Q}^{\kappa,*} \left(  \underline{V}(w_{[0,n-1]}) > V(w_n)\in I(x);   e^{-\kappa [V(w_{k-1})-V(w_{n})] }\Delta_k(\kappa) \,\geq \frac{cL  }{(n-k+1)^2}\right).
  \end{align*} 
  From the fact that $(\Delta V(w_k), \Delta_k(\kappa))_{1\leq k\leq n}$ have the same law as $(\Delta V(w_{n-k+1}), \Delta_{n-k+1}{(\kappa)})_{1\leq k\leq n}$, one sees that $(V(w_k), \Delta_k{(\kappa)})_{1\leq k\leq n}$ and $(V(w_n)-V(w_{n-k}), \Delta_{n+1-k}{(\kappa)})_{1\leq k\leq n}$ have the same law. As a consequence, by making this time-reversal, we can rewrite the series above as
  \begin{align*}
  &  e^{-\kappa x} \sum_{n=1}^{\infty} \sum_{k=1}^n \mathbf{Q}^{\kappa,*} \left( \overline{V}(w_{[1,n]})<0, V(w_n)\in I(x); _{n-k+1} e^{\kappa V(w_{n-k+1}) }\Delta_{n-k+1}(\kappa)\geq\frac{c L }{ (n-k+1)^2}\right) \\
  &=   e^{-\kappa x}  \sum_{n=1}^\infty\sum_{k=1}^n \mathbf{Q}^{\kappa,*}\left( \overline{V}(w_{[1,n]})<0, V(w_n)\in I(x); \Delta_k{(\kappa)} e^{\kappa V(w_k)}\geq \frac{c L}{ k^2}\right)\\
  & =e^{-\kappa x} \sum_{k=1}^\infty\E_{\mathbf{Q}^{\kappa,*}}\bigg[
    \mathbf{1}_{\big\{  \overline{V}(w_{[1,k]})<0, -V(w_k)\leq \frac{\ln (\Delta_k{(\kappa)} / cL) + 2 \ln k}{\kappa} \big\}} \sum_{n=k}^\infty\P_{V(w_k)}\left( \MS^{(\kappa)}_{[1,n-k]}<0, S^{(\kappa)}_{n-k}\in I(x)\right) \bigg],
  \end{align*}
  where the last equality follows from the Markov property at $w_k$. Note that $\{V(w_i); 1\leq i\leq j-1\}$ which is distributed as $(S^{(\kappa)}_i; 1\leq i\leq j-1)$, is independent of $(\Delta V(w_j),\Delta_j{(\kappa)})$. Let us introduce a new couple $(\zeta, \Delta^{(\kappa)})$ which under $\P$ is distributed as $(\Delta V(w_1),\Delta_1{(\kappa)})$ under $\Q^{\kappa,*}$ and is independent of the random walk $(S^{(\kappa)}_k)$. It follows from \eqref{RWeSbd} that 
  \begin{align*} 
  \E\left[ N_L^{c}(x) \right]  & \leq c_0 e^{-\kappa x} \sum_{k=1}^\infty\E_{\mathbf{Q}^{\kappa,*}}\bigg[
    \mathbf{1}_{\big\{  \overline{V}(w_{[1,k]})<0, -V(w_k)\leq \frac{\ln (\Delta_k{(\kappa)} / cL) + 2 \ln k}{\kappa} \big\}} \left( 1- V(w_k) \right) \bigg], \\
    & \leq c_6 e^{-\kappa x} \sum_{k=1}^\infty\E\left[\ind{\MS^{(\kappa)}_{[1,k-1]}<0, -S^{(\kappa)}_{k-1}\leq \zeta+\frac{ \ln (\Delta^{(\kappa)} / cL) + 2 \ln k }{\kappa}}(1+\ln k) \left( 1 +\ln_+ \frac{\Delta^{(\kappa)}}{cL} \right)  \right].
  \end{align*}
  Observe that for $\lambda \in(0,\kappa-1)$, we have $ \ln\E[e^{\lambda S_1^{(\kappa)}}]= \psi( \kappa-\lambda)< 0$. Thus, for any $0<b$, 
  \begin{align*}
    \P\left(\MS^{(\kappa)}_{[1,\ell]}<0,  -S^{(\kappa)}_\ell \leq b\right)\leq   \P\left( 0 <  -S^{(\kappa)}_\ell \leq b\right) \leq  e^{\lambda b}\E\left[e^{\lambda S_k^{(\kappa)}}\right] =  e^{\lambda b + \psi( \kappa-\lambda) k}.
  \end{align*} 
  It hence follows from independence that
  \begin{align*} 
  \E\left[ N_L^{c}(x) \right] & \leq  c_7 e^{-\kappa x} \sum_{k=0}^\infty (1+\ln_+ k)(1+k)^{2\lambda/\kappa} e^{ \psi( \kappa-\lambda) k}\E\left[\left(  1+\ln_+ \frac{\Delta^{(\kappa)}}{cL} \right)  \left(\frac{\Delta^{(\kappa)}}{cL}\right)^{\lambda/\kappa}e^{\lambda \zeta}\right]\\
  &\leq  c_8 e^{-\kappa x}\E\left[ \left(  1+\ln_+ \frac{\Delta^{(\kappa)}}{cL} \right)  \left(\frac{\Delta^{(\kappa)}}{cL}\right)^{\lambda/\kappa}e^{\lambda \zeta}\right]. 
  \end{align*} 
  We claim that for $\lambda \in(0,\kappa-1)$, 
  \begin{align}
    E_{\eqref{eq-D-zeta-moment}}&:=\E\left( [1+\ln_+\Delta^{(\kappa)}][\Delta^{(\kappa)}]^{\lambda/\kappa}e^{\lambda \zeta}\right) \notag\\
    &=\E_{\Q^{\kappa,*}}\left[e^{\lambda V(w_1)}\left(\sum_{z\in\Omega(w_1)}e^{-\kappa V(z)}\right)^{\lambda/\kappa}\left(1+\ln_+\left(\sum_{z\in\Omega(w_1)}e^{-\kappa V(z)}\right)\right)\right]  < \infty  \label{eq-D-zeta-moment}
  \end{align}
   As a result, $\E\left[ N_L^{c}(x) \right]=o_L(1)e^{-\kappa x}$ and hence for $L\gg 1$, $x\geq1$, 
  \begin{equation}\label{eq-1st-N-L-x}
    c_9 e^{-\kappa x}\leq \E[N_L(x)]\leq c_{10} e^{-\kappa x}.
  \end{equation} 
  Indeed,  observe that by Proposition \ref{BRWchangeofp} 
  \begin{align*} 
  E_{\eqref{eq-D-zeta-moment}} &\leq \E\left[ \left( \sum_{|u|=1}e^{-(\kappa-\lambda)V(u)} \right) \left( \sum_{|u|=1}e^{-\kappa V(u)} \right)^{\lambda/\kappa}\left[1+\ln_+\left(  \sum_{|u|=1}e^{-\kappa V(u)} \right)  \right]\right]\\
  &\leq  \E\left[\left(\sum_{|u|=1}e^{-V(u)}\right)^{\kappa-\lambda}\left(\sum_{|u|=1}e^{-  V(u)}\right)^{\lambda }\left(1+\ln_+\left(\sum_{|u|=1}e^{-\kappa V(u)}\right)\right)\right] \\
  &\leq \E\left[\left(\sum_{|u|=1}e^{-V(u)}\right)^{\kappa}\right]^{(\kappa-\lambda)/\kappa}\E\left[\left(\sum_{|u|=1}e^{-  V(u)}\right)^{\kappa}\left(1+\ln_+\left(\sum_{|u|=1}e^{-\kappa V(u)}\right)\right)^{\kappa/\lambda}\right]^{\lambda/\kappa}
  \end{align*}
  where the second inequality holds as $\kappa>\kappa-\lambda>1$; and the third one follows from H\"{o}lder's inequality. From Assumption \ref{cond4}, we conclude that $  E_{\eqref{eq-D-zeta-moment}} < \infty$.

  For the second moment, we compute that 
  \begin{align*}
   &\E[N_L(x)^2] =\E[N_L(x)]+\E\bigg[\sum_{\substack{u,v\in\T; \\ u\neq v; u\wedge v\in\{u,v\}}}\ind{E_u\cap F_u^L}\ind{E_v\cap F_v^L}\bigg]
   +
   \E\bigg[\sum_{\substack{u,v\in\T; \\ u\neq v; u\wedge v\notin\{u,v\}}}\ind{E_u\cap F_u^L}\ind{E_v\cap F_v^L}\bigg] \\
   & \leq \E[N_L(x)]+ 2 \E\bigg[\sum_{u\in\T}\ind{E_u\cap F_u^L}\left(\sum_{v: v>u}\ind{E_v}\right)\bigg]+ \E\left[\sum_{u\in\T}\ind{E_u\cap F_u^L}\left(\sum_{k=1}^{|u|}\sum_{z\in\Omega(u_k)}\sum_{v: v\geq z}\ind{E_v}\right)\right] . 
  \end{align*} 
  By using the branching property, conditionally on $\nf^{\Omega}_{|u|}:= \sigma( (u_{k}, V(u_k)), (z, V(z))_{z \in \Omega(u_{k}) } : 1\leq k\leq |u| )$, for each $u \in \cup_{n \geq 0} \mathbb{N}^{n}$, we have 
  \begin{equation*}
    \E\left[\ind{u\in\T} \ind{E_u\cap F_u^L}\left(\sum_{v: v>u}\ind{E_v}\right) \mid  \nf^{\Omega}_{|u|} \right]  \leq    \E\left[\ind{u \in \mathbb{T}} \ind{E_u\cap F_u^L}\E[N(x+V(u))\mid  V(u)]\right], 
  \end{equation*}
  and 
  \begin{align*}
    & \E\left[\sum_{u\in\T}\ind{E_u\cap F_u^L}\left(\sum_{k=1}^{|u|}\sum_{z\in\Omega(u_k)}\sum_{v: v\geq z}\ind{E_v}\right) \mid    \nf^{\Omega}_{|u|} \right] \\
    & \leq  \E\left[\ind{u \in \mathbb{T}} \ind{E_u\cap F_u^L}\left(\sum_{k=1}^{n}\sum_{z\in\Omega(u_k)}\E[N(x+V(z))\mid  V(z)]\right)\right]. 
  \end{align*} 
  In view of \eqref{meanNx}, for $L\geq1$, we obtain that 
  \begin{align*}
  \E[N_L(x)^2]& \leq   
  c_{10}e^{-\kappa x}  + 
  c_{11} \sum_{n\geq1}\E\left[\sum_{|u|=n}\ind{E_u }e^{-\kappa (V(u)+x)}\right] \\
  & \phantom{  c_{100}e^{-\kappa x}+\, } + c_{11}\sum_{n\geq1}\E\left[\sum_{|u|=n}\ind{E_u\cap F_u^L}\left(\sum_{k=1}^{n}\sum_{z\in\Omega(u_k)}e^{-\kappa (V(z)+x)}\right)\right]\\
  &\leq  c_{10}e^{-\kappa x} + c_{11} \E[N(x)]+ c_{11}L \E[N(x)]\leq c_{12} L e^{-\kappa x}.
  \end{align*}
  Therefore, for $L$ sufficiently large and fixed, we conclude from \eqref{PaleyZygmund}  \eqref{eq-1st-N-L-x} and the above estimate that $
  \P(\tM\leq -x)\geq c_{13} e^{-\kappa x} $. 
  \end{proof}

\section{Proof of Theorem \ref{BRWcvg}}\label{mainthm}

\subsection{Tightness conditioned on $\tM\leq -x$.} \label{tightness}

In this part, we give some lemmas to establish the tightness of $(\sum_{u\in\T}\delta_{V(u)-\tM},  \frac{\sqrt{\psi''(\kappa)}}{\sqrt{z}}(|u^*|-\frac{z}{\psi'(\kappa)}), \frac{D_\infty}{ze^z}, \frac{W_\infty}{e^z}, \tM+z)$ conditioned on $\tM\le -z$. The proofs will be presented in Section \ref{sec4-proof}.

Recall that $u^*$ is chosen among the youngest ones achieving $\tM$.  Let $x\mapsto b(x)$ be a non-negative function such that $b(x)=o(\sqrt{x})$ when $x\uparrow\infty$. 
Recall  
$ J(x)= [\frac{x}{\psi'(\kappa)}-b(x)\sqrt{x}, \frac{x}{\psi'(\kappa)}+b(x)\sqrt{x}  ]$
in \eqref{DefJx} and recall that $I(x)=(-x-1,-x]$.

The tightness of $\frac{\sqrt{\psi''(\kappa)}}{\sqrt{x}}(|u^*|-\frac{x}{\psi'(\kappa)})$ comes from the following lemma. 
\begin{lem}\label{generationofM}
 Under the assumption of Theorem \ref{BRWcvg}, there exists some constant $c_{2}>0$ such that for all $x$ large enough, 
 \[
 \P\left( |u^*|\notin J(x) \mid \tM\in I(x) \right)\le \exp(-c_{2} b(x)^2).
 \]
\end{lem}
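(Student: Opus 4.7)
The strategy is to bound the numerator $\P(|u^*| \notin J(x), \tM \in I(x))$ by a first-moment / many-to-one argument and divide by the lower bound $\P(\tM \in I(x)) \gtrsim e^{-\kappa x}$. This lower bound follows from Lemma~\ref{BRWroughbd} by subtracting $\P(\tM \le -x-1) \le e^{-\kappa(x+1)}$ from $\P(\tM \le -x) \ge c_1 e^{-\kappa x}$, possibly after slightly enlarging the window $I(x)$ by a fixed length if $c_1 \le e^{-\kappa}$.

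For the numerator, observe that whenever $|u^*| = n$, there exists $|u| = n$ with $V(u) \in I(x)$ all of whose strict ancestors $v < u$ satisfy $V(v) \ge V(u)$, since $u^*$ is among the youngest minimizers. Hence Markov's inequality combined with the many-to-one lemma at $\theta = \kappa$ (recall $\psi(\kappa) = 0$) gives
\begin{equation*}
\P(|u^*| = n, \tM \in I(x)) \le \E\!\left[e^{\kappa S^{(\kappa)}_n}\ind{S^{(\kappa)}_n \in I(x),\, \min_{0 \le k < n} S^{(\kappa)}_k \ge S^{(\kappa)}_n}\right].
\end{equation*}
The classical time-reversal identity $(S^{(\kappa)}_n - S^{(\kappa)}_{n-k})_{0 \le k \le n} \stackrel{d}{=} (S^{(\kappa)}_k)_{0 \le k \le n}$ converts the minimum constraint into the ballot-type constraint $\MS^{(\kappa)}_{[1,n]} \le 0$, and bounding $e^{\kappa S^{(\kappa)}_n} \le e^{-\kappa x}$ on $\{S^{(\kappa)}_n \in I(x)\}$ yields
\begin{equation*}
\P(|u^*| = n, \tM \in I(x)) \le e^{-\kappa x}\,\P\!\left(S^{(\kappa)}_n \in I(x),\, \MS^{(\kappa)}_{[1,n]} \le 0\right) \le e^{-\kappa x}\,\P(S^{(\kappa)}_n \in I(x)).
\end{equation*}

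Since $\E[S^{(\kappa)}_1] = -\psi'(\kappa) < 0$ and $\mathrm{Var}(S^{(\kappa)}_1) = \psi''(\kappa) > 0$, the typical value of $S^{(\kappa)}_n$ is $-n\psi'(\kappa)$ with Gaussian fluctuations of order $\sqrt{n}$. For $n$ in the moderate range $n \asymp x$ satisfying $|n - x/\psi'(\kappa)| > b(x)\sqrt{x}$, a non-lattice local CLT (available since Assumption~\ref{cond4} provides Cram\'er tilts of $S^{(\kappa)}$) yields
\begin{equation*}
\P(S^{(\kappa)}_n \in I(x)) \lesssim \frac{1}{\sqrt{n}}\exp\!\left(-\frac{(n\psi'(\kappa) - x)^2}{2 n \psi''(\kappa)}\right).
\end{equation*}
Summing this over such $n$ reduces to a Gaussian tail integral $\int_{|s|>b(x)}\exp(-c s^2)\,ds \lesssim \exp(-c b(x)^2)$. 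For $n$ outside the moderate range, say $n \le \delta x$ or $n \ge \delta^{-1} x$ for some small $\delta>0$, a Cram\'er large-deviation bound gives $\P(S^{(\kappa)}_n \in I(x)) \le e^{-c_0 x}$, whose total contribution is much smaller than $\exp(-c b(x)^2)$ under the hypothesis $b(x) = o(\sqrt{x})$.

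The main obstacle I anticipate is the uniform local CLT in the moderate-deviation window $n = x/\psi'(\kappa) + O(\sqrt{x})$ with a sharp Gaussian-tail constant, which requires a local limit theorem with effective error estimates (e.g.\ Stone's theorem in the non-lattice case) and a careful patching with the Cram\'er regime; closely related estimates already appear in the proof of Lemma~\ref{Rfunctioncvg}. Combining the two regimes yields $\P(|u^*| \notin J(x), \tM \in I(x)) \lesssim e^{-\kappa x}\exp(-c b(x)^2)$, and dividing by $\P(\tM \in I(x)) \gtrsim e^{-\kappa x}$ gives the claim, with $c_2$ chosen strictly smaller than $c$ to absorb constants once $b(x)$ is large enough.
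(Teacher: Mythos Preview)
Your proposal is correct and follows essentially the same route as the paper: reduce via Lemma~\ref{BRWroughbd} to bounding $e^{\kappa x}\P(|u^*|\notin J(x),\,\tM\in I(x))$, pass to the walk $S^{(\kappa)}$ by many-to-one at $\theta=\kappa$, and control $\P(S_n^{(\kappa)}\in I(x))$ in the moderate/large-deviation regimes. The paper's execution differs only cosmetically: for $n<n_-$ it bounds directly by $\P(\min_{k\le n_-}S_k^{(\kappa)}\le -x)$ via Doob's inequality for the submartingale $e^{-\lambda S_k^{(\kappa)}}$ with $\lambda\asymp b(x)/\sqrt{x}$ (avoiding your sum of local probabilities), and for $n\ge n_+$ it performs the Cram\'er tilt explicitly together with Berry--Esseen rather than invoking a packaged moderate-deviation local CLT.
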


About the point process $\sum_{u\in\T}\delta_{V(u)-\tM}$, we have the next lemma.
\begin{lem}\label{BRWtight}
Under the assumptions of Theorem \ref{BRWcvg}, 
 for any fixed $A>0$, as $K$ goes to infinity,
\begin{equation}\label{tightpp}
\limsup_{x\to\infty}\P\left(\sum_{u\in\T}\ind{V(u)-\tM\le A}\ge K \mid  \tM\le -x\right)=o_K(1).
\end{equation}
\end{lem}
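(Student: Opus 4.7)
\emph{Step 1 (Markov reduction).} My plan is to reduce the conditional probability to a first-moment estimate. Set $N_A:=\sum_{u\in\T}\ind{V(u)-\tM\le A}$. Since $\ind{N_A\ge K}\le N_A/K$ and $\P(\tM\le -x)\ge c_{1} e^{-\kappa x}$ by Lemma~\ref{BRWroughbd}, it is enough to prove
\begin{equation}\label{eq-plan-moment}
\E[N_A\,\ind{\tM\le -x}]\le C_A\,e^{-\kappa x},\qquad x\ge 1,
\end{equation}
for a constant $C_A$ depending only on $A$.

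\emph{Step 2 (Many-to-one at $\theta=\kappa$ and bucket decomposition).} Expanding $N_A$ by vertex and applying the change of measure $d(\Q^{\kappa}_{|n}\otimes\P)/d\P=W_n(\kappa)$ from Proposition~\ref{BRWchangeofp2} gives
\[
\E[N_A\,\ind{\tM\le -x}]=\sum_{n\ge 0}\E_{\Q^{\kappa,*}_{|n}\otimes\P}\!\left[e^{\kappa V(w_n)}\,\ind{V(w_n)\le \tM+A,\ \tM\le -x}\right].
\]
On the indicator event $V(w_n)\le \tM+A\le -x+A$; I partition by $V(w_n)\in I(y):=(-y-1,-y]$ with integer $y\ge x-A$ and use $e^{\kappa V(w_n)}\le e^{-\kappa y}$. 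Under $\Q^{\kappa,*}_{|n}\otimes\P$ the spine positions $(V(w_k))_{k\le n}$ are distributed as the random walk $(S^{(\kappa)}_k)_{k\le n}$ of Lemma~\ref{ManytoOne} (with drift $-\psi'(\kappa)<0$); combining $\tM\le\mS^{(\kappa)}_{[0,n]}$ with $V(w_n)-\tM\le A$ forces $S^{(\kappa)}_n-\mS^{(\kappa)}_{[0,n]}\le A$. Dropping the harmless $\tM\le -x$ indicator thus leaves
\[
\E[N_A\,\ind{\tM\le -x}]\le\sum_{y\ge x-A}e^{-\kappa y}\sum_{n\ge 0}\P\bigl(S^{(\kappa)}_n\in I(y),\,S^{(\kappa)}_n-\mS^{(\kappa)}_{[0,n]}\le A\bigr).
\]

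\emph{Step 3 (Renewal bound).} The time reversal $\tilde S^{(\kappa)}_j:=S^{(\kappa)}_n-S^{(\kappa)}_{n-j}$ produces a random walk $(\tilde S^{(\kappa)}_j)_{j\le n}\stackrel{d}{=}(S^{(\kappa)}_j)_{j\le n}$ with $\tilde S^{(\kappa)}_n=S^{(\kappa)}_n$ and $\max_{0\le j\le n}\tilde S^{(\kappa)}_j=S^{(\kappa)}_n-\mS^{(\kappa)}_{[0,n]}$. Shifting by $A$ converts the event $\{S^{(\kappa)}_n\in I(y),\,\max_{j\le n}\tilde S^{(\kappa)}_j\le A\}$ started at $0$ into $\{\MS^{(\kappa)}_{[1,n]}<0,\,S^{(\kappa)}_n\in I(y+A)\}$ started at $-A$. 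Hence \eqref{RWeSbd} applied with $a=A$ yields
\[
\sum_{n\ge 0}\P\bigl(S^{(\kappa)}_n\in I(y),\,S^{(\kappa)}_n-\mS^{(\kappa)}_{[0,n]}\le A\bigr)\le c_0(1+A),\qquad y\ge 0,
\]
and summing the geometric series $\sum_{y\ge x-A}e^{-\kappa y}$ proves \eqref{eq-plan-moment} and hence the lemma.

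\emph{Main obstacle.} The delicate point is Step~3: dropping either the bucket constraint $V(w_n)\in I(y)$ or the proximity constraint $S^{(\kappa)}_n-\mS^{(\kappa)}_{[0,n]}\le A$ alone already produces a divergent sum over $n$, since the reflected walk $S^{(\kappa)}_n-\mS^{(\kappa)}_{[0,n]}$ has a non-degenerate limit in distribution. Both constraints must be kept simultaneously and combined via the time-reversal duality to invoke the ladder-renewal bound of Lemma~\ref{Rfunctioncvg}; this uniform-in-$y$ bound is what allows the geometric sum in $y$ to produce the required factor $e^{-\kappa x}$.
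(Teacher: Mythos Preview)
Your proof is correct, but it takes a slightly more elaborate route than the paper's. The paper drops the proximity constraint immediately: from $V(u)-\tM\le A$ and $\tM\le -x$ it retains only $V(u)\le -x+A$, then applies many-to-one at $\theta=\kappa$ to obtain
\[
\E[N_A\ind{\tM\le -x}]\le \sum_{n\ge 0}\E\bigl[e^{\kappa S_n^{(\kappa)}}\ind{S_n^{(\kappa)}\le -x+A}\bigr],
\]
and bounds this directly by $\lesssim e^{-\kappa(x-A)}$ via a ladder-height decomposition. Your approach instead retains the constraint $V(w_n)-\tM\le A$, relaxes it to the spine constraint $S_n^{(\kappa)}-\mS^{(\kappa)}_{[0,n]}\le A$ (valid since $\tM\le\mS^{(\kappa)}_{[0,n]}$), and then uses time reversal to feed into the ready-made bound \eqref{RWeSbd}. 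Both are fine; yours has the advantage of citing an existing lemma rather than redoing a ladder computation, while the paper's is shorter because it avoids the reversal step entirely.

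One remark on your ``Main obstacle'' paragraph: the claim that dropping the proximity constraint alone makes the sum over $n$ diverge is not accurate. In fact $\sum_{n\ge 0}\P(S_n^{(\kappa)}\in I(y))$ is already bounded uniformly in $y$ (this is the occupation-time bound \eqref{eq-occupation-time-j-j+1} used elsewhere in the paper), so the bucket constraint by itself suffices. This is precisely why the paper's simpler argument works. Also, in Step~3 the shift produces $\MS^{(\kappa)}_{[1,n]}\le 0$ rather than $<0$; to match the strict inequality in \eqref{RWeSbd} you can either invoke Assumption~\ref{cond3} or simply shift by $A+1$ instead of $A$, which is harmless for the conclusion.
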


 Recall that $\W^\tM=e^{\tM}W_\infty$ and $\D^\tM=e^{\tM}D_\infty$. Employing inequalities \eqref{eq-bound-W} and \eqref{eq-desired-bound-D},
 we could bound their conditioned moments in the  following lemma which suffices to deduce tightness.

\begin{lem}\label{bdcondmomWD}
Under the Assumptions \ref{cond1},\ref{cond2}, \ref{cond3} and \ref{cond4}, for any $\delta\in(0,1\wedge \delta_0)$,
we have
\begin{equation}\label{momWM}
\sup_{x\in\mathbb{R}_+}\E\left[(\mathcal{W}^\tM)^{\kappa+\delta}\mid \tM\le -x\right]<\infty,
\end{equation}
and
\begin{equation}\label{momDM}
\sup_{x\in\R_+}\E\left[\left(\frac{|\mathcal{D}^\tM|}{1+x}\right)^{\kappa+\delta}\mid \tM \le -x\right]<\infty.
\end{equation}
\end{lem}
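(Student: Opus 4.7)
\medskip

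\noindent\textbf{Proof proposal for Lemma \ref{bdcondmomWD}.}
The plan is to write the event $\{\tM \le -x\}$ as a disjoint union of the shells $\{-x-k-1 \le \tM < -x-k\}$ for $k\ge 0$, control $e^{(\kappa+\delta)\tM}$ deterministically on each shell, and then apply Lemma \ref{lem-highmoments-martlim} to the \emph{complementary} event $\{\tM \ge -x-k-1\}$. Since $\tM_n:=\inf_{|u|\le n}V(u) \ge \tM$, the inclusion $\{\tM \ge -y\}\subset \{\tM_n\ge -y\}$ holds for every $n$, so by Fatou's lemma applied to $W_n\to W_\infty$ and $D_n\to D_\infty$ we obtain the $\infty$-versions
\begin{equation*}
\E[W_\infty^{\kappa+\delta}\ind{\tM\ge -y}] \le C_{\eqref{eq-bound-W}}\, e^{\delta y}, \qquad \E[|D_\infty|^{\kappa+\delta}\ind{\tM\ge -y}] \le C_{\eqref{eq-desired-bound-D}}\, e^{\delta y}(1+y)^{\kappa+\delta}.
\end{equation*}

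\medskip

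For the additive martingale, I would then estimate
\begin{align*}
\E\bigl[(\W^\tM)^{\kappa+\delta} \ind{\tM\le -x}\bigr]
&= \sum_{k\ge 0}\E\bigl[W_\infty^{\kappa+\delta}\, e^{(\kappa+\delta)\tM}\ind{-x-k-1\le\tM<-x-k}\bigr] \\
&\le \sum_{k\ge 0} e^{-(\kappa+\delta)(x+k)} \E\bigl[W_\infty^{\kappa+\delta}\ind{\tM\ge -x-k-1}\bigr] \\
&\le C_{\eqref{eq-bound-W}}\, e^\delta \sum_{k\ge 0} e^{-(\kappa+\delta)(x+k)} e^{\delta(x+k)}
\;=\; C\, e^{-\kappa x},
\end{align*}
with a geometric series summing to a finite constant. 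Dividing by $\P(\tM\le -x)\ge c_1 e^{-\kappa x}$ from Lemma \ref{BRWroughbd} yields \eqref{momWM} with a bound uniform in $x\ge 0$ (and the case of small $x$ is handled by noting that $\W^\tM \le W_\infty$ and $W_\infty\in L^{\kappa+\delta}$ by taking $n\to\infty$ in \eqref{eq-bound-W} with $x=0$, combined with $\P(\tM\le -x)$ being bounded below on $[0,1]$).

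\medskip

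For the derivative martingale the same decomposition gives
\begin{align*}
\E\bigl[|\D^\tM|^{\kappa+\delta}\ind{\tM\le -x}\bigr]
&\le \sum_{k\ge 0} e^{-(\kappa+\delta)(x+k)} \E\bigl[|D_\infty|^{\kappa+\delta}\ind{\tM\ge -x-k-1}\bigr] \\
&\le C_{\eqref{eq-desired-bound-D}}\, e^\delta \sum_{k\ge 0} e^{-\kappa(x+k)} (2+x+k)^{\kappa+\delta}.
\end{align*}
Using the elementary inequality $(2+x+k)^{\kappa+\delta} \le 2^{\kappa+\delta-1}\bigl((1+x)^{\kappa+\delta}+(1+k)^{\kappa+\delta}\bigr)$ and $\sum_{k\ge 0}(1+k)^{\kappa+\delta}e^{-\kappa k}<\infty$, the right-hand side is at most $C(1+x)^{\kappa+\delta} e^{-\kappa x}$. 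Dividing by $\P(\tM\le -x)\ge c_1 e^{-\kappa x}$ and by $(1+x)^{\kappa+\delta}$ yields \eqref{momDM}.

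\medskip

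The main potential obstacle is ensuring Lemma \ref{lem-highmoments-martlim}, which is stated for $W_n, D_n$ under the event $\{\tM_n\ge -x\}$, transfers cleanly to the martingale limits under $\{\tM\ge -x\}$; this is resolved by the monotonicity $\tM_n\ge \tM$ and Fatou. Everything else is just the shell decomposition plus a summable geometric series, and the factor $(1+x)^{\kappa+\delta}$ on the derivative-martingale side is precisely what is needed to cancel the polynomial growth produced by \eqref{eq-desired-bound-D}.
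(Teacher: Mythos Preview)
Your proof is correct and follows essentially the same route as the paper: pass to the martingale limits via Fatou and the inclusion $\{\tM\ge -y\}\subset\{\tM_n\ge -y\}$, decompose $\{\tM\le -x\}$ into the shells $\{\tM\in I(x+k)\}$, bound $e^{(\kappa+\delta)\tM}$ deterministically on each shell, apply Lemma~\ref{lem-highmoments-martlim}, and sum the resulting geometric series against the lower bound from Lemma~\ref{BRWroughbd}. One small remark: your side comment for small $x$ claims $W_\infty\in L^{\kappa+\delta}$ from \eqref{eq-bound-W} at $x=0$, but that inequality only controls $\E[W_\infty^{\kappa+\delta}\ind{\tM\ge 0}]$; fortunately this is unnecessary, since your shell bound already gives $\E[(\W^\tM)^{\kappa+\delta}\ind{\tM\le -x}]\le Ce^{-\kappa x}$ for all $x\ge 0$, and for $x\in[0,1)$ the denominator satisfies $\P(\tM\le -x)\ge \P(\tM\le -1)\ge c_1 e^{-\kappa}$ by monotonicity.
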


\subsection{Weak convergence conditioned on $\tM\leq -x$: proof of Theorem \ref{BRWcvg}.}\label{sec-weakconv}

In this section, we will prove the following proposition, which, combined with the Lemmas in the previous section, suffices to deduce Theorem \ref{BRWcvg}. 

\begin{prop}\label{BRWmaincvg}
Under the assumptions of Theorem \ref{BRWcvg}, for any continuous and bounded functions $\phi_i: \R\rightarrow\R_+$, $i=0,1,2$, and any compactly supported continuous function $g:\R_+\to \R_+$, the following limit exists.
\begin{multline}
\lim_{x\rightarrow\infty} e^{\kappa x}\E\left[e^{-\sum_{u\in\T}g(V(u)-\tM)}\phi_0(\sqrt{\frac{\psi'(\kappa)}{x}}[|u^*|-\frac{x}{\psi'(\kappa)}])\phi_1(\W^\tM)\phi_2(\frac{\D^\tM}{|u^*|})\ind{\tM\leq -x}\right] \\
=\C(\phi_1,\phi_2,g)\E[\phi_0(G)]
\end{multline}
Here $G$ is a centered Gaussian random variable of variance $\frac{\psi''(\kappa)}{\psi'(\kappa)^2}$, and $\C(\phi_1,\phi_2,g)$ is some finite number.
\end{prop}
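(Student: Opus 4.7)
The strategy is to peel off the minimizer $u^*$, apply Lyons' change of measure at $\theta=\kappa$, exploit the spinal decomposition, and reduce to the renewal-type Lemma \ref{Rfunctioncvg}. First decompose
\[
\{\tM\le -x\}=\bigsqcup_{\ell\ge 0}\bigsqcup_{n\ge 0}\bigsqcup_{|u|=n}\{u^*=u\}\cap\{V(u)\in I(x+\ell)\}.
\]
Rewriting the integrand as a sum over $|u|=n$ and applying Proposition \ref{BRWchangeofp2} at $\theta=\kappa$ (using $\psi(\kappa)=0$), the quantity to estimate becomes
\[
\sum_{\ell\ge 0}\sum_{n\ge 0}\E_{\Q^{\kappa,*}_{\vert n}\otimes\P}\Bigl[e^{\kappa(x+V(w_n))}\,\Phi_{n,\ell}(w_n)\,\ind{w_n=u^*,\,V(w_n)\in I(x+\ell)}\Bigr],
\]
where $\Phi_{n,\ell}$ packages $\phi_0$ (applied to the centered generation), $\phi_1(\W^\tM)$, $\phi_2(\D^\tM/|u^*|)$, and $e^{-\sum_{u\in\T}g(V(u)-\tM)}$. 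Since $V(w_n)\in I(x+\ell)$ forces $e^{\kappa(x+V(w_n))}\asymp e^{-\kappa\ell}$, summability in $\ell$ is gained.

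Under $\Q^{\kappa,*}_{\vert n}\otimes\P$, the spine $(V(w_k))_{k\le n}$ is distributed as $(S^{(\kappa)}_k)$ with negative mean $-\psi'(\kappa)$; off-spine siblings generate independent standard BRWs, and after time $n$ every vertex continues as a standard BRW. Modulo tie-breaking (negligible by the non-lattice Assumption \ref{cond3}), the event $\{w_n=u^*\}$ becomes: $V(w_k)>V(w_n)$ for $k<n$, every sibling $v\in\Omega(w_k)$ satisfies $\min_{v'\in\T_v}V(v')\ge V(w_n)$, and the subtree rooted at $w_n$ satisfies $\widetilde{\tM}^{(w_n)}\ge 0$. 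Decomposing the martingale limits along the spine gives
\[
\W^\tM=\widetilde{W}^{(w_n)}_\infty+\sum_{k=1}^{n}\sum_{v\in\Omega(w_k)}e^{V(w_n)-V(v)}\widetilde{W}^{(v)}_\infty,
\]
\[
\frac{\D^\tM}{|u^*|}=\Bigl(-\frac{V(w_n)}{n}-\psi'(1)\Bigr)\widetilde{W}^{(w_n)}_\infty+\frac{\widetilde{D}^{(w_n)}_\infty}{n}+O(1/n),
\]
with $O(1/n)$ gathering sibling contributions. On $J(x+\ell)$ one has $V(w_n)/n\to-\psi'(\kappa)$, so the leading term of $\D^\tM/|u^*|$ is $(\psi'(\kappa)-\psi'(1))\widetilde{W}^{(w_n)}_\infty$, matching the identification of $Z$ in the theorem statement. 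The point process $\sum_u\delta_{V(u)-\tM}$ similarly splits into a sum of shifted subtree point processes hanging off the siblings plus the contribution of $\T_{w_n}$.

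For fixed $\ell$, conditional on the sibling configurations the $n$-sum is of the form
\[
\sum_{n\in J(x+\ell)}\phi_0\Bigl(\sqrt{\tfrac{\psi'(\kappa)}{x}}(n-\tfrac{x}{\psi'(\kappa)})\Bigr)\,e^{\kappa(x+V(w_n))}\,\ind{\MS^{(\kappa)}_{[1,n]}<0,\,S^{(\kappa)}_n\in I(x+\ell)}\cdot(\text{subtree factors}),
\]
and Lemma \ref{Rfunctioncvg} extracts the Gaussian factor $\E[\phi_0(G)]$ multiplied by a renewal constant. Contributions from $n\notin J(x+\ell)$ are negligible by Lemma \ref{generationofM}, while the subtree at $w_n$, conditioned on $\widetilde{\tM}^{(w_n)}\ge 0$ and glued with the stabilized sibling configurations, yields the limit pair $(\en_\infty,Z)$ independent of $G$. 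Summing against $e^{-\kappa\ell}$ and invoking dominated convergence produces a finite constant $\C(\phi_1,\phi_2,g)$ that factorizes with $\E[\phi_0(G)]$.

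The main obstacle is justifying the interchange of the limit $x\to\infty$ with the double infinite sums and the subtree expectations, since the series for $\W^\tM$ contains potentially large summands from early spine generations where $V(w_n)-V(v)$ is very negative but $\widetilde{W}^{(v)}_\infty$ is unrestricted. Uniform integrability is secured via the $L^{\kappa+\delta}$-moment bounds of Lemma \ref{bdcondmomWD}, the point-process tightness of Lemma \ref{BRWtight}, the uniform renewal bound \eqref{RWeSbd}, and the tail estimate of Lemma \ref{BRWroughbd} for subtree minima. A further delicate point is the rigorous identification of $(\en_\infty,Z)$ with $\E[Z^\kappa]<\infty$, which requires careful passage to the limit for the sibling sums combining continuity of $\phi_1,\phi_2$ with vague convergence on the point-process marginal.
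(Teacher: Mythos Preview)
Your overall architecture (slice $\{\tM\le -x\}$ into $\{\tM\in I(x+\ell)\}$, change of measure at $\theta=\kappa$, spinal decomposition, then feed the spine sum into Lemma~\ref{Rfunctioncvg}) matches the paper's. But there is a genuine gap at the step where you write ``conditional on the sibling configurations the $n$-sum is of the form $\sum_{n\in J}\phi_0(\cdots)\,e^{\kappa(x+V(w_n))}\ind{\cdots}\cdot(\text{subtree factors})$''. The subtree factors you need---$\phi_1(\W^\tM)$, $\phi_2(\D^\tM/|u^*|)$, $e^{-\sum_u g(V(u)-\tM)}$, and the event $\{w_n=u^*\}$---all depend on the \emph{terminal} segment of the spine (the last few generations near $w_n$) and on \emph{all} siblings along the path. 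They are not measurable with respect to a fixed initial $\sigma$-field, so you cannot condition on them and then apply Lemma~\ref{Rfunctioncvg}, which requires a random walk started from a fixed point $-a$.

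The paper resolves this with two devices you do not invoke. First, a \emph{truncation}: replace $\W^\tM,\D^\tM$ by $\W^{u^*,\le t},\D^{u^*,\le t}$ depending only on the last $t$ spine generations (Lemma~\ref{BRWrest}), and show via a separate estimate (Lemma~\ref{badmin}) that on a good event $G_n(b)$ the point-process functional and the identity $\{w_n=u^*\}$ also depend only on the last $b$ generations. Second, a \emph{time reversal} of the i.i.d.\ increments $(\Delta V(w_k),\sum_{u\in\Omega(w_k)}\delta_{\Delta V(u)})_{1\le k\le n}$: after reversal, the ``last $b$ generations'' become the first $b$, and all the relevant factors are now measurable with respect to $\sigma(V(w_j),\Omega(w_j):j\le 2b)$. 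Only then can one apply the Markov property at step $2b$ and reduce the tail to $\E_{-a}[e^{\kappa S^{(\kappa)}_m+\kappa x}\ind{\MS^{(\kappa)}_{[1,m]}<0,\,S^{(\kappa)}_m\in I(x)}]$, to which Lemma~\ref{Rfunctioncvg} applies. Your appeal to the moment bounds of Lemma~\ref{bdcondmomWD} establishes tightness but does not identify the limit; without the truncation/time-reversal pair the limit $\C(\phi_1,\phi_2,g)$ cannot be extracted, and the factorization with $\E[\phi_0(G)]$ does not follow.
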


Note that this proposition immediately implies that as $x\to\infty$,
\[
\P(\tM\le -x)\sim c_\tM e^{-\kappa x}.
\]
And $c_\tM\in(0,1]$ because of Lemma \ref{BRWroughbd}. Moreover, we will see from the proof that conditioned on $\tM \le -x $, $(\frac{\D^\tM}{|u^*|}, \W^\tM)$ converges jointly in law to $((\psi'(\kappa)-\psi'(1))Z, Z)$ with non-negative limiting random variable $Z$.

In order to prove Proposition \ref{BRWmaincvg}, first, we make a decomposition of the martingale limits at the first generation. 
$\P$-a.s.,
\[
W_\infty=\sum_{|z|=1}e^{-V(z)}W_\infty^{(z)},\quad D_\infty =\sum_{|z|=1}e^{-V(z)}[D_\infty^{(z)}+(-V(z)-\psi'(1))W_\infty^{(z)}]
\]
where $(W_\infty^{(z)}, D_\infty^{(z)}), |z|=1$ are martingale limits associated with the subtrees rooted at $z$, respectively, which are therefore i.i.d. copies of $(W_\infty, D_\infty)$ and are independent of $(V(z), |z|=1)$. 

For any $u\in\T$ such that $|u|=n$, let $(\rho, u_1,\cdots, u_n)$ be its ancestral line. This means that $u_k$ is the ancestor of $u$ at the $k$-th generation. For any $z\in \T$,  $\Omega(z)$ is the set of all brothers of $z$, i.e.,
\begin{equation*} 
\Omega(z):=\{v\in \T: \overleftarrow{v}=\overleftarrow{z}, v\neq z\}.
\end{equation*}
Then, observe that $\P$-a.s.,
\begin{align*}
W_\infty=&\sum_{k=1}^n \sum_{z\in\Omega(u_k)}e^{-V(z)}W_\infty^{(z)}+e^{-V(u)}W_\infty^{(u)},\\
D_\infty=&\sum_{k=1}^n \sum_{z\in\Omega(u_k)}e^{-V(z)}[D_\infty^{(z)}-(V(z)+k\psi'(1))W_\infty^{(z)}]+e^{-V(u)}[D_\infty^{(u)}-(V(u)+n\psi'(1))W_\infty^{(u)}].
\end{align*}
For any integer $t$, the truncated versions are defined by
\begin{align*}
\W^{u, \le t}:=&\sum_{k=(n-t)_+}^n \sum_{z\in\Omega(u_k)}e^{V(u)-V(z)}W_\infty^{(z)}+W_\infty^{(u)},\\
\D^{u,\le t}:=&\sum_{k=(n-t)_+}^n \sum_{z\in\Omega(u_k)}e^{V(u)-V(z)}[D_\infty^{(z)}-(V(z)+k\psi'(1))W_\infty^{(z)}]+[D_\infty^{(u)}-(V(u)+n\psi'(1))W_\infty^{(u)}],
\end{align*}
with $(j)_+:=\max\{j,0\}$. Recall that  $u^*$ is chosen at random among the youngest ones attaining $\tM$. So, by taking $u=u^*$, we have
$\W^{u^*,\le t}$ and $\D^{u^*, \le t}$ which will be compared with the following terms.
\begin{align}\label{eMW}
\mathcal{W}^{\tM}=&e^{\tM}W_\infty=\sum_{k=1}^{|u^*|} \sum_{z\in\Omega(u^*_k)}e^{\tM-V(z)}W_\infty^{(z)}+W_\infty^{(u^*)},\\
\mathcal{D}^{\tM}=&e^{\tM}D_\infty=\sum_{k=1}^{|u^*|} \sum_{z\in\Omega(u^*_k)}e^{\tM-V(z)}[D_\infty^{(z)}-(V(z)+k\psi'(1))W_\infty^{(z)}]\nonumber\\
& \qquad\qquad+ [D_\infty^{(u^*)}+(-\tM-|u^*|\psi'(1))W_\infty^{(u^*)}]\label{eMD}.
\end{align}

Our goal is to establish the weak convergence of $(\W^\tM, \frac{\D^\tM}{|u^*|})$ conditioned on $\tM\le -x$. The following lemma shows that we only need to study this truncated versions. 

\begin{lem}\label{BRWrest}
Under the assumptions of Theorem \ref{BRWcvg}, for any $\delta>0$, we have
\begin{align}
\lim_{t\rightarrow\infty}\sup_{x\in\R_+}\P\left(\mathcal{W}^{\tM}-\mathcal{W}^{u^*,\leq t}\geq \delta \mid \tM\leq-x\right)=&0, \label{eq-truncate-W}\\
\lim_{t\rightarrow\infty}\sup_{x\in\R_+}\P\left(\frac{1}{1+|u^*|}|\mathcal{D}^{\tM}-\mathcal{D}^{u^*,\leq t}|\geq \delta \mid \tM\leq-x\right)=&0. \label{eq-truncate-D}
\end{align} 
\end{lem}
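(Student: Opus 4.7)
The plan is to reduce \eqref{eq-truncate-W} and \eqref{eq-truncate-D} to first-moment estimates of the form
\[
\E\bigl[(\W^\tM - \W^{u^*,\le t})\ind{\tM\le -x}\bigr] \le \epsilon(t)\, e^{-\kappa x}, \qquad \forall x \ge 0,
\]
with $\epsilon(t) \to 0$ as $t \to \infty$ (and the analogous inequality with an extra factor $1+|u^*|$ in the denominator for $\D$). Combined with Markov's inequality and the lower bound $\P(\tM\le -x)\ge c_1 e^{-\kappa x}$ from Lemma \ref{BRWroughbd}, this would yield the claim.

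The starting observation is the telescoping identity: on $\{|u^*|\ge t+1\}$, setting $m:=|u^*|-t-1$, the martingale decomposition $W_\infty=\sum_{|v|=m}e^{-V(v)}W_\infty^{(v)}$ applied inside the subtree rooted at $u^*_m$ yields
\[
\W^{u^*,\le t} = e^{\tM - V(u^*_m)}\, W_\infty^{(u^*_m)},
\qquad
\W^\tM - \W^{u^*,\le t} = \sum_{|v|=m,\, v\ne u^*_m} e^{\tM-V(v)}\,W_\infty^{(v)}.
\]
Each such $v$ satisfies $\min_{\T_v}V\ge \tM$ since $u^*\notin \T_v$. An analogous identity holds for $\D^\tM - \D^{u^*,\le t}$, with $W_\infty^{(v)}$ replaced by $D_\infty^{(v)} - (V(v)+m\psi'(1))W_\infty^{(v)}$.

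I would then drop the ``youngest minimum'' convention via $\ind{u^*=u}\le \ind{V(u)\le -x,\, V(w)\ge V(u)\ \forall w\in\T}$ and apply Lyons' change of measure at $\theta=\kappa$ (Proposition~\ref{BRWchangeofp2}) to rewrite $\sum_{|u|=n}(\cdots)$ as $\E_{\Q^{\kappa,*}_{\vert n}\otimes\P}[e^{\kappa\zeta_n}(\cdots)]$, where $\zeta_k:=V(w_k)$ is a random walk with drift $-\psi'(\kappa)<0$ under $\Q^{\kappa,*}$. Conditionally on the spine and the sibling positions, the sub-BRWs are independent; the moment bound \eqref{eq-bound-W} combined with H\"older's inequality gives
\[
\E\bigl[W_\infty^{(z)}\,\ind{\min_{\T_z}V\ge \zeta_n}\,\big|\,V(z)\bigr] \lesssim e^{\delta(V(z)-\zeta_n)/(\kappa+\delta)},
\]
while the remaining $\min\ge \zeta_n$ constraints, having probability $\le 1$, are simply dropped. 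By Proposition~\ref{BRWchangeofp}(iii) the sibling point processes are i.i.d.\ across levels under $\Q^{\kappa,*}$, so averaging the sibling sum at level $k^*$ produces a contribution $\lesssim e^{-\eta a_{k^*}}$, where $\eta:=1-\delta/(\kappa+\delta)\in(0,1)$ and $a_{k^*}:=\zeta_{k^*-1}-\zeta_n\ge 0$.

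Summation over $k^*\le n-t-1$ and $n\ge t+1$ is then handled via renewal theory. Under the ladder-type constraint $\zeta_k\ge \zeta_n$ for $k\le n$, the time-reversed spine $(\zeta_{n-j}-\zeta_n)_{0\le j\le n}$ is essentially a random walk with positive drift $\psi'(\kappa)$, so $a_{k^*}\gtrsim \psi'(\kappa)(n-k^*+1)$ typically, yielding $\sum_{k^*\le n-t-1}e^{-\eta a_{k^*}}\lesssim e^{-ct}$ uniformly in the spine path for some $c>0$. The remaining sum over $n$, combined with the estimate \eqref{RWeScvg} applied to the spine walk with $\zeta_n\le -x$, then supplies the required factor $e^{-\kappa x}$. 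The derivative-martingale case proceeds identically using \eqref{eq-desired-bound-D} in place of \eqref{eq-bound-W}; the extra polynomial factor $|V(v)+m\psi'(1)|^{\kappa+\delta}$ produces a pre-factor of order $(1+|u^*|)^{\kappa+\delta}$ that is absorbed exactly by the normalization $(1+|u^*|)^{-1}$. The main obstacle is the renewal analysis under this ladder-type conditioning: ensuring that the per-level bound $e^{-\eta a_{k^*}}$ decays genuinely uniformly in both the spine length $n$ and the depth $x$, so that the decay factor $e^{-ct}$ survives after integrating over all admissible spine paths and summing in $n$.
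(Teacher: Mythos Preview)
Your reduction to a first-moment estimate has a genuine integrability obstruction. After the change of measure to $\Q^{\kappa,*}$ and conditioning on the spine, the bound on $\E\bigl[(\W^\tM - \W^{u^*,\le t})\ind{\tM\le -x}\bigr]$ produces the sibling sum $\sum_{z\in\Omega(w_k)} e^{-\Delta V(z)}$ at each level $k\le n-t$, and integrating over the spine requires
\[
\E_{\Q^{\kappa,*}}\Bigl[\sum_{z\in\Omega(w_1)} e^{-\Delta V(z)}\Bigr]=\E\Bigl[\sum_{|u|=1}e^{-\kappa V(u)}\sum_{|z|=1,\,z\ne u}e^{-V(z)}\Bigr]<\infty.
\]
Assumption~\ref{cond4} only gives $\E[X^{\kappa+\delta_0}]<\infty$ with $X=\sum_{|u|=1}(1+|V(u)|)e^{-V(u)}$, together with $\psi$ finite on $[1-\delta_0,\kappa+\delta_0]$. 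When $\delta_0<1$ this need not control the quantity above: if many first-generation particles sit near $0$ with a heavy-tailed count $N$ satisfying $\E[N^{\kappa+\delta_0}]<\infty$ but $\E[N^2]=\infty$, the displayed expectation is essentially $\E[N(N-1)]=\infty$. Your detour through \eqref{eq-bound-W} and H\"older does not help: it yields $\E[W_\infty^{(z)}\ind{\min\ge\zeta_n}\mid V(z)]\lesssim e^{\delta(V(z)-\zeta_n)/(\kappa+\delta)}\ge 1$, which is \emph{weaker} than the trivial $\E[W_\infty^{(z)}]=1$ and still leaves a sibling sum $\sum_z e^{-\eta\Delta V(z)}$ with $\eta=\kappa/(\kappa+\delta)$ close to $1$, facing the same obstruction. (For $\D$ the situation is no better: the claimed ``pre-factor of order $(1+|u^*|)^{\kappa+\delta}$'' is certainly not ``absorbed exactly'' by a single power $(1+|u^*|)^{-1}$.)

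The paper does not attempt a first-moment bound. It keeps the conditional \emph{probability}
\[
(\Q^{\kappa,*}_{\vert n}\otimes\P)\bigl(|\W|_t>\delta\,\big|\,\mathcal{B}_n\bigr)\ \le\ \min\Bigl\{1,\ \delta^{-1}\sum_{k\le n-t} e^{V(w_n)-V(w_{k-1})}\Delta_k\Bigr\},\qquad \Delta_k:=\sum_{z\in\Omega(w_k)}e^{-\Delta V(z)},
\]
and splits via $\min\{1,\sum_k a_k\Delta_k\}\le \sum_k a_k^{1/2}+\sum_k\ind{\Delta_k>a_k^{-1/2}}$. After time-reversal and the Markov property at the reversed level $l$, the first piece is bounded by $\sum_{l\ge t}\E\bigl[(1+|S_l^{(\kappa)}|)\,e^{S_l^{(\kappa)}/2}\ind{\MS^{(\kappa)}_{[1,l]}<0}\bigr]$, which tends to $0$ as $t\to\infty$ by renewal theory; the second piece needs only $\E_{\Q^{\kappa,*}}[(1+\ln_+\Delta_1)^2]<\infty$, and \emph{this} does follow from Assumption~\ref{cond4} (it reduces to $\E[X^\kappa(\ln X)^2]<\infty$). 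The uniformity in $n$ and $x$ that you flag as the ``main obstacle'' is delivered precisely by the renewal bound \eqref{RWeSbd}, which gives $\sum_{m\ge 0}\P_{-a}(\MS^{(\kappa)}_{[1,m]}<0,\,S_m^{(\kappa)}\in I(x))\lesssim 1+a$ uniformly in $x$.
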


The proof of Lemma \ref{BRWrest} is postponed to Section \ref{sec4-proof}.  Next, we will see that conditioned on $\{\tM\le -x\}$, 
\[
\frac{\D^{u^*,\le t}}{|u^*|}- (\psi'(\kappa)-\psi'(1))\W^{u^*,\le t} 
\]
converges in probability to zero as $x\to\infty$. So, we only need to verify the joint convergence of 
\[
\left( \sum_{u\in\T}\delta_{V(u)-\tM},\, \W^{u^*,\le t} ,\, \sqrt{\frac{\psi'(\kappa)}{x}} \left[ |u^*|-\frac{x}{\psi'(\kappa)}  \right] \right)
\]
conditioned on $\{\tM\le -x\}$. This is what we state in the following Lemma \ref{BRWtruncatedcvg}.

\begin{lem}\label{BRWtruncatedcvg}
Let $t$ be a fixed integer.
Under the assumptions of Theorem \ref{BRWcvg}, for any $\delta>0$
\begin{equation}\label{approxD}
\limsup_{x\to\infty}   \P\left( \left|\frac{\D^{u^*,\le t}}{|u^*|}- (\psi'(\kappa)-\psi'(1))\W^{u^*,\le t} \right| \geq \delta  \mid  \tM\le -x \right)=0.  
\end{equation}
Moreover, for any continuous and bounded functions $\phi_i: \R\rightarrow\R_+$, $i=0,1$, and any compactly supported continuous function $g:\R_+\to \R_+$, the following limit exists. 
\begin{multline}\label{truncatedlimit}
\lim_{x\rightarrow\infty} e^{\kappa x}\E\left[e^{-\sum_{u\in\T}g(V(u)-\tM)}\phi_0\left(\sqrt{\frac{\psi'(\kappa)}{x}}[|u^*|-\frac{x}{\psi'(\kappa)}]\right) \phi_1(\mathcal{W}^{u^*,\leq t})   \ind{\tM\leq -x}\right]\\
=\C_t(\phi_1,g)\E[\phi_0(G)]
\end{multline}
Here $G$ is a centered Gaussian random variable of variance $\frac{\psi''(\kappa)}{\psi'(\kappa)^2}$. The explicit expression of $\C_t(\phi_1,g)$ is given in \eqref{Ctphig}.
\end{lem}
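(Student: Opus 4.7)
\textbf{For the approximation \eqref{approxD}.}
Using \eqref{eMW}--\eqref{eMD} and the definitions of the truncated variables, the difference $\frac{\D^{u^*,\le t}}{|u^*|}-(\psi'(\kappa)-\psi'(1))\W^{u^*,\le t}$ can be written as a finite sum over $z\in\Omega(u^*_k)$ with $k\in\{(|u^*|-t)_+,\dots,|u^*|\}$ of terms
\[
e^{\tM-V(z)}\left[\frac{D_\infty^{(z)}}{|u^*|}+A_k(z)\,W_\infty^{(z)}\right],
\]
where $A_k(z):=\frac{-V(z)-k\psi'(1)}{|u^*|}-(\psi'(\kappa)-\psi'(1))$, plus an analogous contribution from $u^*$ itself. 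On $\{\tM\le-x\}$, Lemma \ref{generationofM} gives $|u^*|/x\to 1/\psi'(\kappa)$ in probability, while Lemma \ref{BRWtight} ensures $V(z)-\tM$ is bounded for the (finitely many) $z$'s involved; these force $A_k(z)\to 0$ in probability (using $V(z)=\tM+O(1)\approx -\psi'(\kappa)|u^*|$ and $k/|u^*|\to 1$), and $D_\infty^{(z)}/|u^*|\to 0$ by $|u^*|\to\infty$ together with Proposition \ref{Dcvg}. Uniform integrability is supplied by Lemma \ref{bdcondmomWD}.

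\textbf{Setup for \eqref{truncatedlimit}.}
Setting $F(\T,u):=e^{-\sum_v g(V(v)-V(u))}\phi_1(\W^{u,\le t})$ and $\Phi_n:=\phi_0(\sqrt{\psi'(\kappa)/x}[n-x/\psi'(\kappa)])$, I decompose on the generation $n$ of $u^*$ and on the interval $I(x+\ell)\ni V(u^*)$ and apply Proposition \ref{BRWchangeofp2} at $\theta=\kappa$:
\[
e^{\kappa x}\E\left[F(\T,u^*)\Phi_{|u^*|}\ind{\tM\le -x}\right]=\sum_{\ell,n}\Phi_n\,e^{\kappa x}\,\E_{\Q^{\kappa,*}_{\vert n}\otimes\P}\left[e^{\kappa V(w_n)}F(\T,w_n)\ind{w_n=u^*,V(w_n)\in I(x+\ell)}\right].
\]
Under $\Q^{\kappa,*}_{\vert n}\otimes\P$, $V(w_k)=S^{(\kappa)}_k$ is a random walk with drift $-\psi'(\kappa)<0$, the subtrees off the spine are independent BRWs under $\P$, and $e^{\kappa x+\kappa V(w_n)}\asymp e^{-\kappa\ell}$ on $\{V(w_n)\in I(x+\ell)\}$. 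Lemma \ref{generationofM} further restricts the $n$-sum to $n\in J(x)$ with negligible error.

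\textbf{Factorisation and application of Lemma \ref{Rfunctioncvg}.}
The event $\{w_n=u^*\}$ decomposes a.s. into: (a) $V(w_k)>V(w_n)$ for $0\le k<n$; (b) $\tM(\T_v)>V(w_n)$ for every subtree $\T_v$ rooted at a sibling of the spine; (c) every descendant of $w_n$ lies strictly above $V(w_n)$. Via the time reversal $\widetilde S_j:=V(w_{n-j})-V(w_n)$ (so $-\widetilde S$ is distributed as $S^{(\kappa)}$), (a) becomes $\widetilde S_j>0$ for $1\le j\le n$ with $\widetilde S_n\in[x+\ell,x+\ell+1)$. Splitting the first $t$ reversed-spine steps (which carry $\W^{u^*,\le t}$, the local $g$-contribution from the siblings $\Omega(w_{n-j})$ for $j\le t$, and the subtree rooted at $w_n$) from the subsequent $n-t$ steps, the latter contribute, with $a=\widetilde S_t$, exactly a sum of the form \eqref{RWsumecvg} with $x$ replaced by $x+\ell$. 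Lemma \ref{Rfunctioncvg} then delivers $\E[\phi_0(G)]\,\cb(a)$ in the limit. Integrating over the law of the last $t$ spine steps together with the (conditioned) sibling subtrees and the subtree at $w_n$, and summing $\sum_{\ell\ge 0}e^{-\kappa\ell}$, yields the finite constant $\C_t(\phi_1,g)$.

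\textbf{Main obstacle.}
The delicate point is condition (b) for distant ancestors $w_k$ with $k\le n-t$: each such sibling subtree contributes a factor $\P(\tM(\T_v)>V(w_n)-V(v))$, which depends on $x$ through $V(w_n)$. By Lemma \ref{BRWroughbd}, $1-\P(\tM(\T_v)>V(w_n)-V(v))\lesssim e^{-\kappa(V(v)-V(w_n))}$, and on the positive-spine event $V(v)-V(w_n)$ is typically large uniformly in such $k$; the infinite product of these factors should therefore converge to a multiplicative weight to be absorbed into $\cb(\cdot)$. Justifying the interchange of the limit with the $(n,\ell)$-sums and the subtree expectations, together with the propagation of uniform integrability through the change of measure, rests on combining Lemma \ref{BRWroughbd} with the $L^p$ bounds of Lemmas \ref{lem-highmoments-martlim} and \ref{bdcondmomWD}.
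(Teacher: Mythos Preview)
Your overall strategy for \eqref{truncatedlimit} --- change of measure to $\Q^{\kappa,*}_{\vert n}\otimes\P$, time reversal of the spine, and an appeal to Lemma~\ref{Rfunctioncvg} --- is exactly the paper's route. The genuine gap is in how you handle the obstacle you correctly identify. You propose to split off only the first $t$ reversed spine steps and then argue that the infinite product of sibling-subtree survival factors $\prod_{k\le n-t}\prod_{v\in\Omega(w_k)}\P(\tM^{(v)}>V(w_n)-V(v))$ converges and can be ``absorbed into $\cb(\cdot)$''. This does not work as stated: $\cb(a)$ in Lemma~\ref{Rfunctioncvg} is a pure random-walk constant and carries no branching information, and more importantly the $g$-sum $\sum_{u}g(V(u)-\tM)$ does \emph{not} localise to the last $t$ generations (the support $[0,K_g]$ of $g$ has nothing to do with the fixed integer $t$). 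The paper introduces a \emph{second} truncation parameter $b>t$ and the good event $G_n(b)$ on which every sibling subtree and every spine point with index $\le n-b$ sits above $V(w_n)+K_g$. Lemma~\ref{badmin} (proved separately) shows $G_n(b)^c$ contributes $o_{b}(1)$ uniformly in $x$; on $G_n(b)$ the indicators in your condition~(b) are identically $1$ for $j\le n-b$, the $g$-sum and $\mathtt{mult}$ become functionals of the last $b$ generations only, and the remaining $(n-2b)$-step random walk piece is exactly \eqref{RWsumecvg} evaluated at $a=-V(w_{2b})$. One then lets $x\to\infty$ followed by $b\to\infty$, the latter limit existing by monotonicity of $\chi_n(b)$ and Lemma~\ref{badmin}. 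Without this second truncation your argument cannot separate the local branching data from the long random-walk segment.

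Your sketch for \eqref{approxD} has a different problem. Lemma~\ref{BRWtight} controls the \emph{number} of vertices within height $A$ of $\tM$; it says nothing about the heights of the specific siblings $z\in\Omega(u^*_k)$, which can be arbitrarily large (the factor $e^{\tM-V(z)}$ then compensates, but this needs to be argued, not waved away). More seriously, under $\P(\,\cdot\mid\tM\le-x)$ the laws of $D_\infty^{(z)},W_\infty^{(z)}$ for $z$ near the spine are \emph{not} the unconditioned ones --- the conditioning on $\tM$ couples them to the spine --- so ``$D_\infty^{(z)}/|u^*|\to0$ by Proposition~\ref{Dcvg}'' is circular. The paper avoids this by passing to $\Q^{\kappa,*}$ first (where the sibling subtrees are i.i.d.\ under $\P$), time-reversing, bounding the conditional probability of the bad event by an explicit sum involving $\Delta_\ell=\sum_{z\in\Omega(w_\ell)}e^{-\Delta V(z)}(1+|\Delta V(z)|)$, and closing via the local limit theorem and a tail-truncation $f(b)$ on $e^{V(w_\ell)}\Delta_\ell$. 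Your invocation of Lemma~\ref{bdcondmomWD} for uniform integrability does not substitute for this, since that lemma bounds moments of $\W^\tM,\D^\tM$ and not of the individual subtree contributions under the conditioned law.
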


\begin{proof}[Proof of Lemma \ref{BRWtruncatedcvg}] 

We first prove \eqref{truncatedlimit}.
In fact, we only need to show the convergence of $e^{\kappa x}\E_\eqref{LaplacecondM}$ where $\E_\eqref{LaplacecondM}$ equals to
\begin{equation}\label{LaplacecondM}
\E\left[e^{-\sum_{u\in\T}g(V(u)-\tM)}\phi_0(\sqrt{\frac{\psi'(\kappa)}{x}}[|u^*|-\frac{x}{\psi'(\kappa)}])\phi_1(\mathcal{W}^{u^*,\leq t})\ind{\tM\in I(x)}\right],
\end{equation}
with $I(x)=(-x-1,-x]$. 

Recall that $u^*$ is chosen at random among the youngest individuals attaining $\tM$, and that  $J(x)$ is defined in \eqref{DefJx} with $1 \ll b(x) \ll \sqrt{x}$. 
By Lemma \ref{generationofM} and Lemma \ref{BRWroughbd}, we have
\begin{equation}\label{spineuptomin}
\E_\eqref{LaplacecondM} 
=\sum_{n\in J(x)}\phi_0(\sqrt{\frac{\psi'(\kappa)}{x}}(n-\frac{x}{\psi'(\kappa)}))\E_\eqref{spineuptomin}(n)+o(e^{-\kappa x}),
\end{equation} 
where 
\begin{equation*}
\E_\eqref{spineuptomin}(n):=\E\left[\frac{\sum_{|u|=n}\ind{V(u)=\tM<\tM_{n-1}, V(u)\in I(x)}\phi_1(\mathcal{W}^{u,\leq t})e^{-\sum_{z\in\T}g(V(z)-V(u))}}{\sum_{|v|=n}\ind{V(v)=\tM}}\right],
\end{equation*}
with $\tM_k=\inf_{|z|\le k}V(z)$.

Then, applying Lyons' change of measure $ \d   (\mathbf{Q}^{\kappa}_{\vert n} \otimes \mathbf{P})= W_{n}(\kappa) \d \P $ and   using Proposition \ref{BRWchangeofp2}  we have   
 \begin{equation*}
  \E_\eqref{spineuptomin} (n) =   \mathbf{E}_{\mathbf{Q}^{\kappa,*}_{\vert n} \otimes \mathbf{P}}\left(\frac{e^{\kappa V\left(w_n\right)} \phi_{1}\left(   \mathcal{W}^{w_n, \leq t}\right)}{ \sum_{|v|=n}\ind{V(v)=V(w_{n})} } e^{-\sum_{z\in\T}g(V(z)-V(w_{n}))} \ind{\left\{V\left(w_n\right)=\mathbf{M}<\mathbf{M}_{n-1}, V\left(w_n\right) \in I(x)\right\}}  
   \right) .  
 \end{equation*}
 
 The event $\{ V\left(w_n\right)=\mathbf{M}<\mathbf{M}_{n-1}, V\left(w_n\right) \in I(x)\}$ can be rewritten as 
 \begin{equation*}
   \left\{V\left(w_n\right)<\min _{0\le j \leq n-1} V\left(w_j\right) ,V\left(w_n\right) \in I(x) \right\} \cap \left\{ \mathbf{M}^{\left(w_n\right)} \geq 0 \right\} \cap \left( \cap_{j=1}^{n} A_{j} \right)
   \end{equation*}
 where  $A_{j}:=\cap_{z \in \Omega\left(w_j\right)}\{V(z)+\mathbf{M}^{(z)} \geq V\left(w_n\right) , V(z)+\mathbf{M}_{n-1-j}^{(z)}>V\left(w_n\right)  \}$ with $\tM^{(z)}:=\inf_{v\in\T_z} V(v)-V(z)$ and $\tM_k^{(z)}:=\inf_{v\in\T_z, |v|\le |z|+k}V(v)-V(z)$.

Note that $g$ is compactly supported, suppose that $\mathrm{supp}(g)\subset [0,K_g]$ for some $K_g>0$. Next,  for any integer $n\geq b\geq1$, let us define
 \begin{equation*}
  G_n\left(b\right):= \cap_{j=1}^{n-b} \left\{  \forall z \in \Omega\left(w_j\right),  V(z)+\mathbf{M}^{(z)} > V\left(w_n\right)+K_{g}; V(w_{j})>V(w_{n})+K_{g}  \right\},  
 \end{equation*} 
We claim that the following lemma holds. Its proof will be given in Section \ref{sec4-proof}.

\begin{lem}\label{badmin}
\begin{equation}\label{awayspine}
\limsup_{b\rightarrow\infty}  \limsup_{x\rightarrow\infty} \sum_{ n\geq b+1 }\mathbf{Q}^{\kappa,*}    \left(   V(w_n)\in I(x),V(w_n)<\underline{V}(w_{[0,n-1]}) ,  G_{n} (b)^c\right)=0,
\end{equation}
where $\underline{V}(w_{[0,m]}): = \min_{0\le j\le m}V(w_j) $.
\end{lem}

Note that Lemma \ref{badmin} holds also if we replace $\mathbf{Q}^{\kappa,*}$ by $\mathbf{Q}^{\kappa,*}_{\vert n} \otimes \mathbf{P}$.

Observe that  on the event $G_{n}(b)$ we have $\ind{A_{j}}=1$ for all $j \leq n-b$, and
 \begin{equation*}
  \sum_{ |v|=n} \ind{V(v)=V\left(w_n\right)} =1+ \sum_{j=n-b+1}^{n} \sum_{u \in \Omega\left(w_j\right)} \sum_{v \geq u,|v|=n} \ind{V(v)=V\left(w_n\right)} =: \mathtt{mult},
 \end{equation*}  
 in addition,
\begin{equation*}
  \sum_{z \in \mathbb{T}} g(V(z)-V(w_{n})) 
   = \sum_{j=n-b+1}^{n} g(V(w_{j})-V(w_{n}))  + \Sigma(g;n,b)+\Sigma_{w_n}(g)
\end{equation*}
where 
\[
\Sigma_{w_n}(g):=\sum_{z\in\T_{w_n}}g(V(z)-V(w_n))\textrm{ and }\Sigma(g;n,b):= \sum_{j=n-b+1}^{n} \sum_{z \in \Omega(w_{j})} \sum_{v \geq z} g(V(v)-V(w_{n})).
\]
In view of this Lemma \ref{badmin} and \eqref{spineuptomin}, we obtain that
\begin{equation}\label{GoodLaplace}
e^{\kappa x}\E_\eqref{LaplacecondM}= e^{\kappa x}\sum_{n\in J(x)} \phi_0 \left[ \sqrt{\frac{\psi'(\kappa)}{x}}(n-\frac{x}{\psi'(\kappa)}) \right] \chi_n(b)+o_{x,b}(1),
\end{equation}
where $\chi_n(b)$ is defined by
\begin{align*}
  \chi_n(b):= \E_{\Q^{\kappa,*}_{\vert n} \otimes \P}\bigg[
  \frac{e^{\kappa V(w_n)}\phi_{1}(\W^{w_n,\le t} )}{  \mathtt{mult}  }
  & \, e^{-  \sum_{j=n-b+1}^{n} g(V(w_{j})-V(w_{n})) } \,  e^{ - \Sigma(g;n,b)}   \\
  &  \times \ind{V(w_n)\in I(x), V(w_n)<\underline{V}(w_{[0,n-1]})}  e^{-\Sigma_{w_n}(g)}\ind{ \mathbf{M}^{(w_{n})} \geq 0} \prod_{j=n-b+1}^{n} \ind{A_{j}}
 \bigg] .
\end{align*}
Recall that under $\Q^{\kappa,*}_{\vert n}\otimes \P$, $(\Delta V(w_i),\sum_{u\in\Omega(w_i)}\delta_{\Delta V(u)})_{1\le i\le n}$ are i.i.d. and given $u\in \{w_n\}\bigcup\cup_{i=1}^n\Omega(w_i)$, $(V(z)-V(u))_{z\in\T_u}$ are i.i.d. and distributed as $\P$. We thus have
  \begin{align*}
   & \left\{ V\left(w_j\right),(V(u),  \{V(z)-V(u)\}_{z\in\T_u})_{u \in \Omega\left(w_j \right)} \right\}_{j=1}^{n} \\
   & \qquad \qquad \overset{\mathrm{law}}{ =}   \left\{ V\left(w_n\right)-V\left(w_{n-j}\right),(V(u), \{V(z)-V(u)\}_{z\in\T_u})_{u \in \Omega\left(w_{n-j+1}\right)}\right\}_{j=1}^{n}.
  \end{align*} 
Operating this time reversal we have the following structure (defined under $\Q^{\kappa,*}$): The spine $(w_k, V(w_k))_{0\le k\le n}$ is time-reversed random walk as above, combined with $\cup_{1\le i\le n}\cup_{u\in\Omega(w_i)}\T_u$. To the new $w_0$, we attach an extra $\P$-distributed branching random walk. Yet we will not count the descendants of $w_n$.

\begin{figure}[htbp]
\begin{center}
\includegraphics[width=0.9\textwidth]{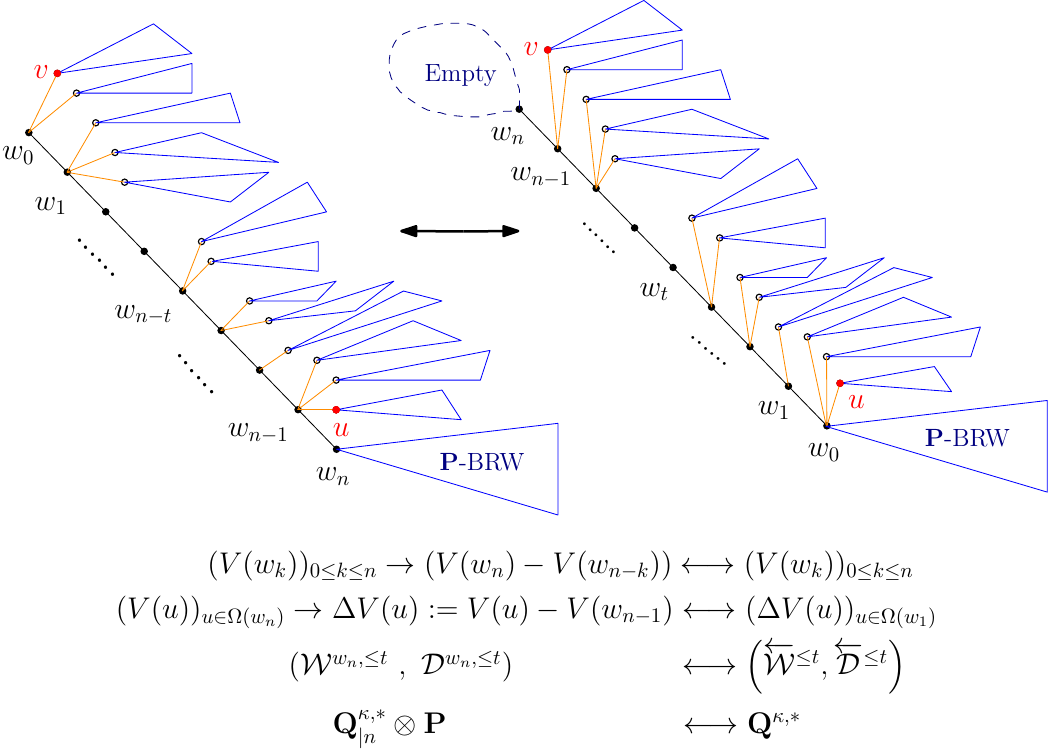}
\caption{Time-reversing tree structure}
\end{center}
\end{figure}

We thus get the following correspondence:
\begin{enumerate}[(a)]
  \item For the extra BRW rooted at $w_0$, we have similarly the random variables $W^{(w_0)}_{\infty}$, $D^{(w_0)}_{\infty}$, $\tM^{(w_0)}$, and $ \Sigma_{w_0}(g)$. Let $\Q^{\kappa,*}((W^{(w_0)}_{\infty}, D^{(w_0)}_{\infty}, \tM^{(w_0)} \Sigma_{w_0}(g))\in \cdot) =\P((W_\infty, D_\infty, \tM,\sum_{u\in\T}g(V(u)))\in\cdot)$. 
  \begin{equation*}
   \mathcal{W}^{w_n, \leq t}  = \sum_{k=n-t}^{n} e^{V\left(w_{n}\right)-V\left(w_{k-1}\right)}
  \sum_{z \in \Omega\left(w_k\right)}   e^{-\Delta V(z)}    W_{\infty}^{(z)}   
 \longleftrightarrow \sum_{\ell =1}^{t+1} e^{V(w_{\ell})} \sum_{z \in \Omega(w_{\ell })} e^{-\Delta V(z)}  W^{(z)}_{\infty}  + W^{(w_0)}_{\infty}   =:  \overleftarrow{\mathcal{W}}^{ \le t} . 
\end{equation*}
  and 
 \begin{align*}
   & \mathcal{D}^{w_n, \leq t}  = \sum_{k=n-t}^{n} e^{V\left(w_{n}\right)-V\left(w_{k-1}\right)}
   \sum_{z \in \Omega\left(w_k\right)}   e^{-\Delta V(z)} ( D_{\infty}^{(z)} -[\Delta V(z) + V(w_{k-1}) +\psi'(1) k]\, W_{\infty}^{(z)} )  \\
     & \qquad \qquad +    [ D_{\infty}^{(w_{n})}  -[V(w_{n})+\psi'(1)n]\, W_{\infty}^{(w_{n})}]  \\
  &\longleftrightarrow \sum_{\ell =1}^{t+1} e^{V(w_{\ell})} \sum_{z \in \Omega(w_{\ell })} e^{-\Delta V(z)} \left\{ D^{(z)}_{\infty}  - \left[ \Delta V(z) + V(w_{n}) - V(w_{\ell}) + \psi'(1) (n-\ell+1) \right] W^{(z)}_{\infty} \right\} \\
  & \qquad \qquad + \left\{ D^{(w_0)}_{\infty} - \left[  V(w_{n}) + \psi'(1) n \right] W^{(w_0)}_{\infty}  \right\} =:  \overleftarrow{\D}^{ \le t}  .
 \end{align*} 
 \item $\sum_{j=n-b+1}^{n} g(V(w_{j})-V(w_{n}))  \longleftrightarrow  \sum_{\ell=1}^{b}  g(-V(w_{\ell})) $, and 
\begin{align*}
 \Sigma(g;n,b) & = \sum_{j=n-b+1}^{n} \sum_{z \in \Omega(w_{j})} \sum_{v \geq z} g(  V(v)-V(z)+ \Delta V(z)+ V(w_{j-1})-V(w_{n})) \\ 
 &\qquad  \longleftrightarrow  \sum_{\ell=1}^{b}   \sum_{z \in \Omega(w_{\ell})} \sum_{v \geq z} g( V(v)-V(z)+ \Delta V(z)-V(w_{\ell}))  := \overleftarrow{\Sigma}(g;b). 
\end{align*} 
\item 
 For each $n-b \leq j\leq n$ with $\ell=n-j+1$,
 \begin{align*}
  A_{j} = & \bigcap_{z \in \Omega\left(w_j\right)}\{ \Delta V(z)+ V(w_{j-1}) +\mathtt{M}^{(z)} \geq V\left(w_n\right) , \Delta V(z)+ V(w_{j-1}) + \mathtt{M}_{n-j-1}^{(z)}>V\left(w_n\right) \} \\
   &\qquad  \longleftrightarrow \bigcap_{z \in \Omega\left(w_\ell\right)}\{ \Delta V(z)+\mathtt{M}^{(z)} \geq V\left(w_\ell \right) , \Delta V(z)+\mathtt{M}_{\ell-2}^{(z)}>V\left(w_\ell\right) \}:= \overleftarrow{A}_{\ell},
 \end{align*} 
\item Let $\T_{\ell}^{(z)}:=\{v\in\T\ :\ v\geq z,\ |v|=|z|+\ell\}$. Then 
\begin{align*}
  \mathtt{mult} & := 1+ \sum_{j=n-b+1}^{n} \sum_{z \in \Omega\left(w_j\right)} \sum_{v \geq z,|v|=n} \ind{V(v)=V\left(w_n\right)} \\
  & \qquad \quad  \longleftrightarrow 1+ \sum_{\ell=1}^{b} \sum_{z \in \Omega\left(w_\ell\right)} \sum_{v \in \T_z, |v|=|z|+\ell -1} \ind{V(v)-V(z) + \Delta V(z)=V\left(w_\ell\right)} =: \overleftarrow{\mathtt{mult}}
 \end{align*}
\end{enumerate} 
For simplicity, let 
$\Xi_{b}:=e^{- \sum_{\ell=1}^{b}  g(-V(w_{\ell}))  } e^{-\Sigma_{w_0}(g)} \ind{\mathtt{M}^{\left(w_0\right)} \geq 0} \, e^{ - \overleftarrow{\Sigma}(g;b) }\prod_{\ell=1}^{b}\ind{\overleftarrow{A}_{\ell}}  $. With this definition, we obtain
 \begin{equation*}
  \chi_{n}(b)=   \mathbf{E}_{\mathbf{Q}^{\kappa,*}}\left( \frac{ e^{\kappa V(w_n)}}{\overleftarrow{\mathtt{mult}}}  \phi_{1}(\overleftarrow{\W}^{\le t} ) \, \ind{\overline{V}(w_{[1,n]})<0, V(w_n)\in I(x)}  \,  \Xi_{b}  \right).  
 \end{equation*}

Let   $b>t$ be sufficiently large.  Conditionally on $\sigma( V(z): z \geq w_{j}, j < 2b )$ (note that $\Xi_{b}, \overleftarrow{\W}^{\le t}, \overleftarrow{\mathtt{mult}}$ are measurable to this sigma-field), 
by  the branching property at $w_{2b}$ ,
one gets that
\begin{equation*} 
\sum_{n \in J(x)} e^{\kappa x} \phi_0(\sqrt{\frac{\psi'(\kappa)}{x}}(k-\frac{x}{\psi'(\kappa)})) \chi_{n} (b)
= \E_{\Q^{\kappa,*} }\left[\frac{  \Xi_{b} \,  \phi_{1}(\overleftarrow{\W}^{\le t} )}{\overleftarrow{\mathtt{mult}}}    \ind{\overline{V}(w_{[1,2b]}<0)} \E_\eqref{mainsum}(V(w_{2b})) 
 \right],
\end{equation*}
where
\begin{equation}
\E_\eqref{mainsum}(y):= \sum_{n \in J(x)-2b} \phi_0(\sqrt{\frac{\psi'(\kappa)}{x}}(n+2b-\frac{x}{\psi'(\kappa)}))\E_{y}\left[e^{\kappa S^{(\kappa)}_{n}+\kappa x}\ind{\MS^{(\kappa)}_{[1,n]}<0, S^{(\kappa)}_n\in I(x)}\right].  \label{mainsum}
\end{equation}
By Lemma \ref{Rfunctioncvg} and dominated convergence theorem,  
\begin{equation*}
 \lim_{x \to \infty} \sum_{n \in J(x)} e^{\kappa x}  \phi_0(\sqrt{\frac{\psi'(\kappa)}{x}}(k-\frac{x}{\psi'(\kappa)})) \chi_{n} (b) =  \E[\phi_0(G) ] \E_{\Q^{\kappa,*} }\left[\frac{  \Xi_{b} \,  \phi_{1}(\overleftarrow{\W}^{\le t} )}{\overleftarrow{\mathtt{mult}}}    \ind{\overline{V}(w_{[1,2b]}<0)} \mathbf{c}(V(w_{2b})) 
\right].  
\end{equation*}
Going back to \eqref{GoodLaplace}. Letting $x\to\infty$ and then $b\to\infty$ yields \eqref{truncatedlimit} with
\begin{equation}\label{Ctphig}
\C_t(\phi_1,  g):=\E[\phi_0(G) ]  \lim_{b\to\infty}\E_{\Q^{\kappa,*} }\left[\frac{  \Xi_{b} \,  \phi_{1}(\overleftarrow{\W}^{\le t} )}{\overleftarrow{\mathtt{mult}}}    \ind{\overline{V}(w_{[1,2b]}<0)} \mathbf{c}(V(w_{2b})) 
\right] . 
\end{equation}
The existence of this limit is ensured by the monotonicity of $\chi_{n}(b)$ in $b$ and  Lemma \ref{badmin}. This completes the proof of \eqref{truncatedlimit}.

Next we prove \eqref{approxD}.    Let  $J(x)$ be defined in \eqref{DefJx} with $ b(x) \equiv b > 0$.   
As in the proof of \eqref{truncatedlimit}, by use of Lemma \ref{generationofM} and Lemma \ref{BRWroughbd}, and then by change of measure and time reversing, we get that
\begin{align}
  & \mathbf{P}_{\eqref{approxD-1}} := e^{\kappa x}\P \left( \left|\frac{\D^{u^*,\le t}}{|u^*|}- (\psi'(\kappa)-\psi'(1))\W^{u^*,\le t} \right| \geq \delta , \tM \in I(x) \right)  \label{approxD-1} \\
   & \leq  \sum_{n\in J(x)} \E_{\mathbf{Q}^{\kappa,*}_{\vert n} \otimes \P}\left[ e^{\kappa V(w_{n})+\kappa x}   \ind{\left|\frac{1}{n}\D^{w_{n},\le t}- (\psi'(\kappa)-\psi'(1))\W^{w_{n},\le t} \right|>\delta ;  \, \underline{V}(w_{[0,n-1]}) > V(w_n)\in I(x)}  \right] + o_{b}(1) \nonumber \\
   & \leq   \sum_{n\in J(x)} \mathbf{Q}^{\kappa,*} \left( \left|\frac{1}{n} \overleftarrow{\D}^{\le t}- (\psi'(\kappa)-\psi'(1))\overleftarrow{\W}^{\le t} \right|>\delta;   \overline{V}(w_{[1,n]})<0, V(w_n)\in I(x)  \right) + o_{b}(1) . \nonumber
\end{align}
Note that on the event $\{ V(w_{n}) \in I(x)\}$ we have 
\begin{align*}
& \left|\frac{1}{n} \overleftarrow{\D}^{\le t}- (\psi'(\kappa)-\psi'(1))\overleftarrow{\W}^{\le t} \right| 
  \\
& \lesssim  \frac{1}{n}\left( \sum_{\ell =1}^{t+1} e^{V(w_{\ell})} \sum_{z \in \Omega(w_{\ell })} e^{-\Delta V(z)}   
    \left\{  |D^{(z)}_{\infty}|+ ( | V(w_{\ell}) - \Delta V(z)|+ b\sqrt{x} ) W^{(z)}_{\infty}  \right\} +   | D^{(w_0)}_{\infty} | +  b\sqrt{x} W^{(w_0)}_{\infty} \right)  
\end{align*} 
 Conditionally on $\mathcal{B}_{n}= \sigma\{ (w_k, V(w_k))_{0\le k\le n}, (V(z))_{z\in \cup_{1\le k < n}c(w_k)} \}  $,   
applying the Markov property we have  
 \begin{align}
  &  \mathbf{Q}^{\kappa,*} \left( \left|\frac{1}{n} \overleftarrow{\D}^{\le t}- (\psi'(\kappa)-\psi'(1))\overleftarrow{\W}^{\le t} \right|>\delta     \mid \mathcal{B}_{n} \right)  \nonumber \\
  & \leq \min \left\{ 1, \frac{1}{\delta} \, \E_{\mathbf{Q}^{\kappa,*}} \left(   \left|\frac{1}{n} \overleftarrow{\D}^{\le t}- (\psi'(\kappa)-\psi'(1))\overleftarrow{\W}^{\le t} \right|    \mid \mathcal{B}_{n} \right)  \right\}\nonumber \\
  & \lesssim  \min \left\{ 1,  \frac{1}{\delta n} \sum_{\ell =1}^{t+1} e^{V(w_{\ell})} \sum_{z \in \Omega(w_{\ell })} e^{-\Delta V(z)}   
 \left(  b \sqrt{x}+ | V(w_{\ell})|+|\Delta V(z)|  \right)   +\frac{b\sqrt{x}}{\delta n} \right\}  \label{eq-bound-diff-DW}
 \end{align} 
Let $\Delta_{\ell}:= \sum_{z \in \Omega(w_{\ell})} e^{-\Delta V(z)} (|\Delta V(z)|+1)$. Since $\max_{1 \leq \ell \leq t+1} e^{V(w_{\ell})} \Delta_{\ell} $ is finite random variable, we can find $f(b)>1$ such that 
\begin{equation}\label{Deffb}
 \lim_{b \to \infty} \max_{1 \leq \ell \leq t+1 } b\, \mathbf{Q}^{\kappa,*}  \left(  e^{V(w_{\ell})} \Delta_{\ell} > f(b)  \right)  = 0 . 
\end{equation}
We then    bound  \eqref{eq-bound-diff-DW} by
 \begin{align*} 
  &\min \left\{ 1,  \frac{b \sqrt{x}}{\delta n} \sum_{\ell =1}^{t+1} e^{V(w_{\ell}) }  \left( 1+ |V(w_{\ell})|  \right) \Delta_{\ell} + \frac{b\sqrt{x}}{n}  \right\}  \\
  &\lesssim   \sum_{\ell=1}^{t+1} \ind{ e^{V(w_{\ell})}  \Delta_{\ell} > f(b)    } + \frac{ b f(b)    \sqrt{x} }{\delta n } \sum_{\ell =1}^{t+1} ( 1+|V(w_{\ell})|) . 
 \end{align*}
Going back to \eqref{approxD-1}, we thus obtain that
 \begin{align*}
  \mathbf{P}_{\eqref{approxD-1}} \lesssim \sum_{n \in J(x)} &  \sum_{\ell=1}^{t+1} \E_{\mathbf{Q}^{\kappa,*}}  \left[   \P_{ V (w_{\ell}) } \left( S^{(\kappa)}_{n-\ell} \in I(x) \right)  ; e^{V(w_{\ell})} \Delta_{\ell} > f(b)    \right]  \\
  & + \sum_{n \in J(x)} \frac{b f(b) \sqrt{x} }{\delta n } \sum_{\ell =1}^{t+1} \E_{\mathbf{Q}^{\kappa,*}} \left[   ( 1+|V(w_{\ell})|)  \P_{ V (w_{\ell}) } \left( S^{(\kappa)}_{n-\ell} \in I(x) \right) \right]  +o_b(1).  
 \end{align*}
By  the local limit theorem \cite[Corollary 1]{Stone1967},  that  is a constant $c_3>0$ depending only on the random walk  such that for all $x \in \mathbb{R}$,
 \begin{equation*}
  \sup_{x \in \mathbb{R}} \mathbf{P} ( S^{(\kappa)}_{n} \in [x-1,x] )  \leq   \frac{c_3}{\sqrt{n}} . 
 \end{equation*} 
Therefore, 
\begin{align*}
  \mathbf{P}_{\eqref{approxD-1}}  & \lesssim  \sum_{\ell=1}^{t+1} \sum_{n\in J(x)} \frac{1}{\sqrt{n-\ell}} \mathbf{Q}^{\kappa,*} \left(  e^{V(w_{\ell})}  \Delta_{\ell} > f(b)   \right)  
    +  \sum_{n \in J(x)} \frac{b f(b) \sqrt{x} }{\delta n } \frac{1}{\sqrt{n-\ell}} +o_b(1) \\
    & \overset{x \to \infty}{\longrightarrow}   \sqrt{\psi'(\kappa) } \sum_{\ell=1}^{t+1} b\,\mathbf{Q}^{\kappa,*} \left(  e^{V(w_{\ell})}  \Delta_{\ell} > f(b)   \right)  +o_b(1) \overset{b \to \infty}{\longrightarrow}  0  
\end{align*}
and the desired result follows.  
\end{proof}

\subsection{Proofs of Lemmas in Sections \ref{tightness} and \ref{sec-weakconv}}
 \label{sec4-proof}

\begin{proof}[Proof of Lemma \ref{generationofM}] 
Recall that $J(x):=[\frac{x}{\psi'(\kappa)}-b(x)\sqrt{x}, \frac{x}{\psi'(\kappa)}+b(x)\sqrt{x}]$ with $1\ll b(x)\ll \sqrt{x}$.
 By Lemma \ref{BRWroughbd}, it suffices to show that there is some constant $c_{2}>0$ such that 
      \begin{equation*}
      e^{\kappa x}  \P \left( |u^{*}| \notin  J(x),  \mathbf{M} \in I(x) \right) \leq  e^{-c_{2} b(x)^{2}} .
      \end{equation*}
 Set $n_\pm:=\frac{x}{\psi'(\kappa)}\pm b(x)\sqrt{x}$. By many-to-one Lemma \ref{ManytoOne},
 \begin{align*}
  &e^{\kappa x}  \P \left( |u^{*}| \notin  J(x),  \mathbf{M} \in I(x) \right) \\
  &\leq  e^{ \kappa x } \P\left(\exists |u|\le n_-, V(u)\le -x\right) +  e^{ \kappa x }\sum_{n\ge n_+}\P\left(\exists |u|=n, V(u)\in I(x)\right)\\
  &\leq  \P\left( \mS^{(\kappa) }_{[1,n_-]}\le -x\right) + \sum_{n\ge n_+} \P(S_n^{(\kappa)}\in I(x)) .
 \end{align*}

      On the one hand, as $ \ln \mathbf{E}(e^{-\lambda S^{(\kappa)}_1 }) = \psi(\kappa+\lambda)\in(0,\infty)$ for $\lambda\in(0,\delta_0)$, by Doob's inequality for submartingale $e^{-\lambda S_n^{(\kappa)}}$,
      \begin{align*}
     \P\left( \mS^{(\kappa) }_{[1,n_-]}\le -x\right) =   \P\left( \max_{1\le k\le n_-} e^{ -\lambda S_k^{(\kappa)}} \ge e^{ \lambda x}\right)
     \le  e^{-\lambda x +\psi(\kappa+\lambda) n_- }.
      \end{align*}
    Note that $\psi(\kappa + \lambda )= \lambda \psi'(\kappa ) + O(\lambda^2) $.  We could take $\lambda = \delta \frac{b_x}{\sqrt{x}}$ with some small $\delta>0$ so that
      \[
        \P\left( \mS^{(\kappa) }_{[1,n_-]}\le -x\right)  \lesssim e^{- \frac12 \delta \psi'(\kappa) b(x)^2}.
      \]
      
      On the other hand, for $n\ge n_+$ and $\lambda>0$, one sees that
      \begin{align*}
      \P(S_n^{(\kappa)}\in I(x)) &=  \E\left[ e^{-\lambda S_n^{(\kappa)} + \lambda S_n^{(\kappa)} }\ind{ S_n^{(\kappa)} \in I(x) }\right] \\
      &\le  e^{\lambda (x+1) + n \psi(\kappa - \lambda )} \P\left( S_n^{(\kappa - \lambda )} \in I(x) \right).
      \end{align*}
      Again, note that $\psi(\kappa-\lambda)= -\lambda \psi'(\kappa) + O(\lambda^2)$ for $\lambda \in (0, \kappa')$. For $n\ge \frac{x}{ \psi'(\kappa) } (1+\epsilon ) $ with some sufficiently small and fixed $\epsilon>0$, we could take $\lambda \in (0, \kappa' ) $ so that with some $\epsilon_0>0$,
      \[
      \P(S_n^{(\kappa)}\in I(x)) \le e^{\lambda (x+1) + n \psi(\kappa - \lambda )} \lesssim e^{ -\epsilon_0 n }.
      \]
      This implies that
      \[
      \sum_{n\ge \frac{x}{ \psi'(\kappa) } (1+\epsilon )}   \P(S_n^{(\kappa)}\in I(x))  \lesssim e^{ - \epsilon_0  \frac{x}{ \psi'(\kappa) } (1+\epsilon )}.
      \]
      It remains to consider $n_+ \le n <  \frac{x}{ \psi'(\kappa) } (1+\epsilon ) $. Recall that $ 1\ll b(x) \ll \sqrt{ x } $. Then we could take $\lambda = \lambda_x = \delta \frac{b(x)}{\sqrt{ x } }$ with some fixed small $\delta>0$ so that
      \[
      \lambda ( x + 1) + n \psi( \kappa - \lambda ) \le - \lambda_x \psi'(\kappa ) \left(n- \frac{x}{\psi'(\kappa) } \right) /2.
      \]
      It then follows that
      \begin{align*}
      \sum_{n_+\le n \le \frac{x}{ \psi'(\kappa) } (1+\epsilon ) }  \P(S_n^{(\kappa)}\in I(x)) \le  \sum_{n_+\le n \le \frac{x}{ \psi'(\kappa) } (1+\epsilon ) } e^{- \lambda_x \psi'(\kappa ) (n- \frac{x}{\psi'(\kappa) }) /2 } \P\left( S_n^{(\kappa - \lambda_x )} \in I(x) \right)
      \end{align*}
      By use of Berry-Essen inequality, one sees that for any $x\in\R$,
      \[
      \P\left( S_n^{(\kappa - \lambda_x )} \in I(x) \right) \le 3 \frac{\E[ 1 + |S_1^{(\kappa - \lambda_x)}|^3 ]}{ (Var(S_1^{( \kappa - \lambda_x )}))^{3/2}} \frac{1}{\sqrt{n}}.
      \]
      Uniformly for $x\gg1$, we could bound $ \frac{\E[ 1 + |S_1^{(\kappa - \lambda_x)}|^3 ]}{ (Var(S_1^{( \kappa - \lambda_x )}))^{3/2}} $ by some constant. We thus end up with
      \begin{align*}
       \sum_{n_+\le n \le \frac{x}{ \psi'(\kappa) } (1+\epsilon ) }  \P(S_n^{(\kappa)}\in I(x)) \lesssim & \sum_{n_+\le n \le \frac{x}{ \psi'(\kappa) } (1+\epsilon ) } e^{- \lambda_x \psi'(\kappa ) (n- \frac{x}{\psi'(\kappa) }) /2 } \frac{1}{\sqrt{n}} \\
       \lesssim & \frac{1}{\sqrt{ x} } \frac{\sqrt{x}}{b(x)} e^{- \psi'(\kappa) \delta b(x)^2/2 } \lesssim e^{ - \psi'(\kappa) \delta b(x)^2/2}.
      \end{align*}
      This concludes Lemma \ref{generationofM}. 
\end{proof}

\begin{proof}[Proof of Lemma \ref{BRWtight}]

By Markov inequality and Lemma \ref{BRWroughbd}, one sees that for any $x\ge A>0$,
\begin{align*}
\P\left(\sum_{u\in\T}\ind{V(u)-\tM\le A}\ge K\vert \tM\le -x\right)&\lesssim  e^{\kappa x} \frac1K \E\left[\sum_{u\in\T}\ind{V(u)\le \tM+A}\ind{\tM\le -x}\right]\\
& \lesssim \frac{1}{K}e^{\kappa x}\sum_{n\ge0}\E\left[\sum_{|u|=n}\ind{V(u)\le -x+A}\right],
\end{align*}
By Many-to-One Lemma \ref{ManytoOne}, we thus get that
\begin{align}\label{bdpp}
\P\left(\sum_{u\in\T}\ind{V(u)-\tM\le A}\ge K\vert \tM\le -x\right)\lesssim \frac{1}{K}e^{\kappa x}\sum_{n\ge0}\E\left[e^{\kappa S^{(\kappa)}_n}\ind{S_n^{(\kappa)}\le -x+A}\right].
\end{align}
By use of the descending ladder process of $(S^{(\kappa)}_n)_{n\ge0}$, we get that
\begin{align*}
&\sum_{n\ge0}\E\left[e^{\kappa S^{(\kappa)}_n}\ind{S_n^{(\kappa)}\le -x+A}\right]=\E\left[\sum_{k\ge0}\ind{\hat{H}_k^{(\kappa),-}\le -x+A}\sum_{j=\hat{\tau}_k^{(\kappa),-}}^{\hat{\tau}_{k+1}^{(\kappa),-}-1}e^{\kappa S_j^{(\kappa)}}\ind{S_j^{(\kappa)}\le -x+A}\right]\\
=&\E\left[\sum_{k\ge0}e^{\kappa \hat{H}_k^{(\kappa),-}}\ind{\hat{H}_k^{(\kappa),-}\le -x+A}\E\left[\sum_{j=0}^{\hat{\tau}_1^{(\kappa),-}-1}e^{\kappa S_j^{(\kappa)}}\ind{S_j^{(\kappa)}\le r}\right]\bigg\vert_{r=-x+A-\hat{H}_k^{(\kappa),-}}\right].
\end{align*}
Observe that for any $r\ge0$,
\begin{align*}
\E\left[\sum_{j=0}^{\hat{\tau}_1^{(\kappa),-}-1}e^{\kappa S_j^{(\kappa)}}\ind{S_j\le r}\right]&=\sum_{j\ge0}\E\left[e^{\kappa S_j^{(\kappa)}}\ind{\mS_j^{(\kappa)}\ge0, S_j^{(\kappa)}\le r}\right]\\
&=  \int_0^r e^s U_w^{(\kappa), + }(\d s) \lesssim e^r. 
\end{align*} 
It then follows that
\begin{align*}
\sum_{n\ge0}\E\left[e^{\kappa S^{(\kappa)}_n}\ind{S_n^{(\kappa)}\le -x+A}\right]& \lesssim \E\left[\sum_{k\ge0}e^{\kappa \hat{H}_k^{(\kappa),-}}\ind{\hat{H}_k^{(\kappa),-}\le -x+A} e^{-x+A-\hat{H}_k^{(\kappa),-}}\right]\\
& =e^{-x+A}\int_{x-A}^\infty e^{-\kappa' y} U^{(\kappa),-}_s(\d y)\lesssim e^{-\kappa(x-A)}.
\end{align*}
Going back to \eqref{bdpp}, we obtain that for any fixed $A>0$,
\[
\limsup_{x\to\infty}\P\left(\sum_{u\in\T}\ind{V(u)-\tM\le A}\ge K\vert \tM\le -x\right)\lesssim \frac1K e^{\kappa A}=o_K(1)
\] 
as required. 
\end{proof}

\begin{proof}[Proof of Lemma \ref{bdcondmomWD}]
  In fact, it follows from \eqref{eq-bound-W} and Fatou's lemma that
  \[
 \E[ W_{\infty} ^{\kappa+\delta} \ind{\mathbf{M}  \geq -x } ] \lesssim e^{\delta x } .
 \]
  Combining it with Lemma \ref{BRWroughbd} gives that
\begin{equation*}
\begin{aligned}
& \sup_{x\in\mathbb{R}_+} \mathbf{E}\left[\left(\mathcal{W}^{\mathbf{M}}\right)^{\kappa+\delta} \mid \mathbf{M} \leq-x\right]   \lesssim  \sup_{x\in\mathbb{R}_+} e^{\kappa x} \sum_{\ell=0}^{\infty} \mathbf{E}\left[\left(e^{\mathbf{M}} W_{\infty}\right)^{\kappa+\delta} ; \mathbf{M} \in I(x+\ell)\right] \\
 & \lesssim  \sup_{x\in\mathbb{R}_+} \sum_{\ell \geq 0} e^{-\kappa \ell}e^{-\delta(x+\ell)} \mathbf{E}\left[ \left(W_{\infty}\right)^{\kappa+\delta} ; \mathbf{M} \in I(x+\ell)\right] \lesssim \sum_{\ell \geq 0} e^{-\kappa \ell} < \infty,
  \end{aligned}
 \end{equation*}
which proves \eqref{momWM}.
An   immediate adaptation of the above argument proves \eqref{momDM}. 
\end{proof}

\begin{proof}[Proof of Lemma \ref{BRWrest}]
  We upper bound the difference by the  summation of absolute values
\begin{align*}
  & \mathcal{W}^{\tM}-\mathcal{W}^{u^*,\leq t}= \sum_{k= 0}^{|u^{*}|-t} \sum_{z\in \Omega(u^{*}_k)}e^{V(u^{*})-V(z)} W_\infty^{(z)} =: |\mathcal{W}|_{t} , \\
& \left|\mathcal{D}^{\mathbf{M}}-\mathcal{D}^{u^*, \leq t} \right| \leq \sum_{k=1}^{|u^{*}|-t}  
  \sum_{z \in \Omega\left(u^{*}_k\right)}    e^{V(u^{*})-V(z)}  \left[ |D_{\infty}^{(z)} | + |(V(z)+\psi'(1) k) |\, W_{\infty}^{(z)} \right]=: |\mathcal{D}|_{t} .
 \end{align*}
      By Lemma \ref{BRWroughbd}, it suffices to show that for any given $\epsilon >0$, 
      \begin{equation*}
       \lim_{t \to \infty} \sup _{x \in \mathbb{R}_{+}} e^{\kappa x} \left[ \P \left(  \frac{1}{|u^{*}|} |\mathcal{D}|_{t} >  \epsilon,  \mathbf{M} \in I(x) \right) + \P \left(|\mathcal{W}|_{t} >  \epsilon,  \mathbf{M} \in I(x) \right)  \right]= 0 .
      \end{equation*}
     As $\{ \mathbf{M} \in I(x)\} \subset  \cup_{n \geq 1} \{ |u^*|= n, V(u^*) \in I(x)\}$, one sees that 
      \begin{align}
         &  e^{\kappa x} \P \left(  \frac{1}{|u^{*|}} |\mathcal{D}|_{t} >  \epsilon,  \mathbf{M} \in I(x) \right)
         \leq e^{\kappa x}  \E \left[  \sum_{n \geq t+1} \sum_{|u|=n} \ind{ u = u^*, V(u) \in I(x)} \ind{ \frac{1}{n} | \mathcal{D} |_{t}   >  \epsilon  } \right]\nonumber\\
        & \le    \sum_{n \geq t+1} \E_{\mathbf{Q}^{\kappa, *}_{\vert n} \otimes \P} \left[ e^{\kappa V(w_{n}) + \kappa x }    \ind{V(w_n)<\min_{0\le j\le n-1} V(w_j), V(w_{n}) \in I(x)} \ind{ \frac{1}{n} | \mathcal{D} |_{t}   >  \epsilon }  \right] ,\label{badDbd}
        \end{align}
where, in \eqref{badDbd} we made a change of measure  $\d (\mathbf{Q}^{\kappa}_{\vert n} \otimes \P )= W_{n}(\kappa) \d \P$ and used Proposition \ref{BRWchangeofp2}.
 Denote again by $\mathcal{B}_{n}:=\sigma((w_k,V(w_{k})), (V(z): z \in \Omega(w_{k})), 1 \leq k \leq n )$ the information of the spine and their brothers. By the Markov inequality and  the branching property we obtain that, on the event $w_n=u^*$,  
       \begin{align*}
        & \mathbf{Q}^{\kappa, *}_{\vert n} \otimes \P \left( \frac{1}{n} | \mathcal{D} |_{t}   >  \epsilon \mid \mathcal{B}_{n}  \right)  
         \leq \min \left\{ 1, \  \E_{  \mathbf{Q}^{\kappa,*}_{\vert n} \otimes \P } \left[ \frac{1}{\epsilon n}  | \mathcal{D} |_{t} \mid \mathcal{B}_{n} \right] \right\} \\
        & \leq \min \left\{ 1, \ \frac{1}{\epsilon n}      \sum_{k=1}^{n-t}    \sum_{z \in \Omega\left(w_k\right)}    e^{V(w_{n})-V(z)} \left(   \E  |D_{\infty}  | + |(V(z)+\psi'(1) k) |\, \E W_{\infty}  \right)\right\} \\
        & \lesssim  \min \left\{ 1, \ \frac{1}{\epsilon n}      \sum_{k=1}^{n-t}      e^{V(w_{n})-V(w_{k-1})}   \sum_{z \in \Omega\left(w_k\right)} e^{-\Delta V(z)} \left( 1+ | \Delta V(z)| + | V(w_{k-1}) | +k  \right)\right\}
       \end{align*}
       Let $\Delta_{k}:=  \sum_{z \in \Omega(w_{k})} e^{-\Delta V(z)} (1+|\Delta V(z)|)$. Then we get that 
       \begin{align*}
        & (\mathbf{Q}^{\kappa,*}_{\vert n} \otimes \P) \left( \frac{1}{n} | \mathcal{D} |_{t}   >  \epsilon \mid \mathcal{B}_{n}  \right) \lesssim  \min \left\{ 1, \ \frac{1}{\epsilon n}      \sum_{k=0}^{n-t}      e^{V(w_{n})-V(w_{k})} (n+ | V(w_{k})|  ) \Delta_{k+1} \right\}  \\
         & \leq   \frac{1}{\epsilon}      \sum_{k=0}^{n-t}      e^{[V(w_{n})-V(w_{k})]/2 } \left( 1+ \frac {| V(w_{k})| }{n} \right)  +     \sum_{k=0}^{n-t}   \ind{  \Delta_{k+1} >e^{ [V(w_{k})-V(w_{n}) ]/2} } .
       \end{align*}  
    Plugging the above  into \eqref{badDbd} yields that $e^{\kappa x} \P \left(  \frac{1}{|u^{*|}} |\mathcal{D}|_{t} >  \epsilon,  \mathbf{M} \in I(x) \right)$ is bounded from above by 
    \begin{align*}
 & \sum_{n \geq t+1} \E_{\mathbf{Q}^{\kappa,*}_{\vert n} \otimes \P} \left[ e^{\kappa V(w_{n})+\kappa x}    1_{\left\{V(w_{n}) <\min_{0\le j<n}V(w_j), V(w_{n}) \in I(x)\right\}}   (\mathbf{Q}^{\kappa,*}_{\vert n} \otimes \P) \left( \frac{1}{n} | \mathcal{D} |_{t}   >  \epsilon \mid \mathcal{B}_{n}  \right)    \right] \\
    & \leq 
    \frac{1}{\epsilon }  \sum_{n \geq t+1}    \E_{\mathbf{Q}^{\kappa,*}_{\vert n} \otimes \P} \left[ \sum_{k=0}^{n-t}     e^{[V(w_{n})-V(w_{k})]/2 }  \left( 1+ \frac {| V(w_{k})| }{n} \right)    \ind{ V(w_{n}) <\min_{0\le j<n}V(w_j), V(w_{n}) \in I(x) }   \right]    
      \\
    &\quad  + \sum_{n \geq t+1} \sum_{k=0}^{n-t} (\mathbf{Q}^{\kappa,*}_{\vert n} \otimes \P) \left(  V(w_{n}) <\min_{0\le j<n}V(w_j), V(w_{n}) \in I(x) , \Delta_{k+1} > e^{[V(w_{k})-V(w_{n}) ]/2 } \right).
    \end{align*}
Following the same  argument  as above, we get that $ e^{\kappa x} \P \left(  |\mathcal{W}|_{t} >  \epsilon,  \mathbf{M} \in I(x) \right) $ is bounded by 
\begin{align*} 
  & \frac{1}{\epsilon }  \sum_{n \geq t+1}    \E_{\mathbf{Q}^{\kappa,*}_{\vert n} \otimes \P} \left[ \sum_{k=0}^{n-t}   e^{[V(w_{n})-V(w_{k})]/2 }      \ind{ V(w_{n}) <\min_{0\le j<n}V(w_j), V(w_{n}) \in I(x)}  \right]   \\
  &\quad  + \sum_{n \geq t+1} \sum_{k=0}^{n-t} (\mathbf{Q}^{\kappa,*}_{\vert n} \otimes \P) \left(  V(w_{n}) <\min_{0\le j<n}V(w_j), V(w_{n}) \in I(x) ,  \Delta_{k+1} >e^{ [V(w_{k})-V(w_{n}) ]/2 } \right).
 \end{align*} 
Notice that we can replace the probability measure  $\mathbf{Q}^{\kappa,*}_{\vert n} \otimes \P $ by $\mathbf{Q}^{\kappa,*} $ in the above formula. Therefore it is sufficient to prove  that  
 \begin{equation}\label{eq-sum-1}
      \sup _{x \in \mathbb{R}_{+}}  \sum_{n \geq t+1}      \E_{\mathbf{Q}^{\kappa,*} } \left[  \sum_{k=0}^{n-t}     e^{[V(w_{n})-V(w_{k})]/2 } \left( 1+ \frac {| V(w_{k})| }{n} \right)        \ind{ \min\limits_{0\leq j<n} V(w_{j}) > V(w_{n}) \in I(x)   }  \right] \overset{ t \to \infty}{\longrightarrow} 0  
    \end{equation}
     and 
     \begin{equation}\label{eq-sum-2}
      \sup _{x \in \mathbb{R}_{+}}  \sum_{n \geq t+1} \sum_{k=0}^{n-t} \mathbf{Q}^{\kappa,*} \left( \min\limits_{0\leq j<n} V(w_{j}) > V(w_{n}) \in I(x) , 2\ln \Delta_{k+1} >  V(w_{k})-V(w_{n})   \right) \overset{ t \to \infty}{\longrightarrow} 0  .
     \end{equation}
    
     Firstly we show \eqref{eq-sum-1}. Denote the summation  in \eqref{eq-sum-1} by $\Sigma_{\eqref{eq-sum-1}}$.
     Observe that 
    \begin{align*} 
      \Sigma_{\eqref{eq-sum-1}}  = \sum_{n \geq t+1} \E  \left[  \ind{  \min\limits_{0\leq j<n} S^{(\kappa)}_{j}> S^{(\kappa)}_{n} \in I(x) } \sum_{k=0}^{n-t}       e^{[S^{(\kappa)}_{n} -S^{(\kappa)}_{k}]/2}  \left( 1+ \frac{|S^{(\kappa)}_{k}|}{n}     \right)  \right] .
    \end{align*}
  By reversing the time, we get that
    \begin{align}
      \Sigma_{\eqref{eq-sum-1}}  &=  \sum_{n \geq t+1} \E  \left[  \ind{  \max\limits_{1\leq j\leq n} S^{(\kappa)}_{j}< 0 , S^{(\kappa)}_{n} \in I(x) } \sum_{k=0}^{n-t}   e^{S^{(\kappa)}_{n-k}/2}  \left( 1+ \frac{|S^{(\kappa)}_{n-k}-S^{(\kappa)}_{n}| }{n} \right)   \right] \nonumber\\
      &\leq   \sum_{n \geq t+1} \E  \left[  \ind{  \max\limits_{1\leq j\leq n} S^{(\kappa)}_{j}< 0 , S^{(\kappa)}_{n} \in I(x) } \sum_{l=t}^{n}   e^{S^{(\kappa)}_{l} /2 }  \left( 1+ |S^{(\kappa)}_{l}| + \frac{ |S^{(\kappa)}_{n}| }{n} \right)   \right] \nonumber\\
      &\le      \sum_{l \geq t}  \E  \left[  \left( 1+ |S^{(\kappa)}_{l}| \right)  e^{S^{(\kappa)}_{l}/2}  \ind{  \max\limits_{1\leq j\leq l} S^{(\kappa)}_{j}< 0  } \sum_{j=0}^{\infty} \mathbf{P}_{S^{(\kappa)}_{l}} \left( \max_{0 \leq i \leq j} S^{(\kappa)}_{i}<0, S^{(\kappa)}_{j} \in I(x) \right)  \right]  \nonumber\\
      & \quad + \sum_{l \geq t}  \E  \left[  e^{S^{(\kappa)}_{l}/2}  \ind{  \max\limits_{1\leq j\leq l} S^{(\kappa)}_{j}< 0  } \sum_{j=0}^{\infty} \mathbf{E}_{S^{(\kappa)}_{l}} \left(   \frac{ |S^{(\kappa)}_{j}| }{j+l} \ind{ \max_{0 \leq i \leq j} S^{(\kappa)}_{i}<0, S^{(\kappa)}_{j} \in I(x) } \right)  \right] . \label{upbd-eq-sum-1}
    \end{align} 
    On the one hand, \eqref{upperbd} shows that for every $a>0$,
    \begin{equation}\label{eq-bound-Rxa}
      \sum_{j=0}^{\infty} \mathbf{P}_{-a} \left( \max_{0 \leq i \leq j} S^{(\kappa)}_{i}<0, S^{(\kappa)}_{j} \in I(x) \right) \le c_0 (1+a).
    \end{equation}
    On the other hand if $x \leq 2a$, we infer from \eqref{upperbd} that   
    \begin{equation*}
      \sum_{j=0}^{\infty} \mathbf{E}_{-a} \left(   \frac{ |S^{(\kappa)}_{j}| }{j+l} 1_{ \{ \max_{0 \leq i \leq j} S^{(\kappa)}_{i}<0, S^{(\kappa)}_{j} \in I(x)\} } \right) \lesssim (1+a)^{2}. 
    \end{equation*}
    Choose a small  $\lambda$.  If $x \geq 2a$ and $j \leq \lambda x$, 
     $\mathbf{P}_{-a}( S^{(\kappa)}_{j} \in I(x) ) =  \mathbf{P}(S^{(\kappa)}_{j} \in I(x-a)) \leq \mathbf{P}(-S^{(\kappa)}_{j} \geq x/2) \leq \E[ e^{-\delta_{o} S^{(\kappa)}_{j} -\delta_{o} x /2 }] =e^{\psi(\kappa+\delta_{o})j-\delta_o x/2} \leq e^{-\delta_o x/4}$ by taking $\delta_o\in(0,\delta_0)$ and $\lambda = \frac{\delta_o}{ 4\psi(\kappa + \delta_o)}$.  Hence  
    \begin{align*}
      & \sum_{j=0}^{\infty} \mathbf{E}_{-a} \left(   \frac{ |S^{(\kappa)}_{j}| }{j+l} 1_{ \{ \max_{0 \leq i \leq j} S^{(\kappa)}_{i}<0, S^{(\kappa)}_{j} \in I(x)\} } \right) \\
      &  \leq  (1+x) \sum_{j \leq \lambda x } \mathbf{P}_{-a} \left(      \max_{0 \leq i \leq j} S^{(\kappa)}_{i}<0, S^{(\kappa)}_{j} \in I(x)  \right) 
      +  
      \frac{1}{\epsilon \lambda }\sum_{j \geq \lambda x }^{\infty} \mathbf{P}_{-a} \left(  \max_{0 \leq i \leq j} S^{(\kappa)}_{i}<0,        S^{(\kappa)}_{j} \in I(x)  \right) \\
      & \leq (1+x)  e^{-\delta_o x/4 }  + \frac{1}{\epsilon \lambda} c_0 (1+a ) \lesssim (1+a).
    \end{align*} 
    Applying these upper bounds in \eqref{upbd-eq-sum-1} yields that
    \begin{equation*}
     \sup_{x\in\R_+} \Sigma_{\eqref{eq-sum-1}} \lesssim  \sum_{l \geq t}  \E  \left[ (1+|S^{(\kappa)}_{l}|)^{2}  e^{S^{(\kappa)}_{l}/2}  1_{\left\{  \max\limits_{1\leq j\leq l} S^{(\kappa)}_{j}< 0 \right\}} \right] \overset{t \to \infty}{\longrightarrow } 0
    \end{equation*}
    since
    \begin{equation*}
      \sum_{l \geq 0}  \E  \left[ (1+|S^{(\kappa)}_{l}|)^{2}  e^{S^{(\kappa)}_{l}/2}  1_{\left\{  \max\limits_{1\leq j\leq l} S^{(\kappa)}_{j}< 0 \right\}} \right] = \int_0^\infty (1+x)^{2}  e^{-x/2} U_s^{(\kappa),-}(\d x ) < \infty .
    \end{equation*}
    
    We now prove \eqref{eq-sum-2}. Denote the summation  in \eqref{eq-sum-2} by $\Sigma_{\eqref{eq-sum-2}}$.  Notice that $( V (w_{j}) -V (w_{j-1}), \Delta_{j})$ are i.i.d., again by time-reversing, we have
    \begin{align*}
      \Sigma_{\eqref{eq-sum-2}}  
      &= \sum_{n \geq t+1} \sum_{k=0}^{n-t} \mathbf{Q}^{\kappa,*} \left( \max\limits_{1\leq j\leq n} V(w_{j}) <0, V(w_{n}) \in I(x) , 2 \ln \Delta_{n-k} > - V(w_{n-k})   \right) \\
      &= \sum_{n \geq t+1} \sum_{k=t}^{n} \mathbf{Q}^{\kappa,*} \left( \max\limits_{1\leq j\leq n} V(w_{j}) <0, V(w_{n}) \in I(x) , 2 \ln \Delta_{k} > - V(w_{k})   \right)  
    \end{align*}
    Applying the Markov property at time $k$ and by \eqref{upperbd}, it follows  that
    \begin{equation*}
     \begin{aligned}
      \Sigma_{\eqref{eq-sum-2}} &\le \sum_{k=t}^{+\infty}\mathbf{E}_{\mathbf{Q}^{\kappa,*}}\left(\ind{\max\limits_{1 \leq j\leq k}V(w_j)<0, -V(w_k) < 2 \ln{\Delta}_k   }\sum_{n\geq0}\mathbf{P}_{V(w_k)}\left(\max_{1 \leq j \leq n}S^{(\kappa)}_j<0,S^{(\kappa)}_n\in I(x)\right)\right) \\
        &\lesssim \sum_{k=t}^{+\infty}\mathbf{E}_{\mathbf{Q}^{\kappa,*}}  \left(\ind{\max\limits_{1 \leq j\leq k}V(w_j)<0, -V(w_k) < 2 \ln{\Delta}_k   } (1- V(w_k))\right) \\
        &\lesssim \sum_{k=t}^{+\infty} \mathbf{E}_{\mathbf{Q}^{\kappa,*}} \left(\ind{\max\limits_{1 \leq j\leq k}V(w_j)<0, -V(w_k) < 2 \ln{\Delta}_k   }  (1+2\ln{\Delta}_k)\right).
        \end{aligned} 
    \end{equation*}
     Let  $(\Delta,\zeta)$ be a random variable independent of everything and distributed as $(\Delta_1,V(w_1))$. Recall that we set $X:= \sum_{|u|=1} e^{-V(u)} (|V(u)|+1 )$ and notice  that $W_{1}(\kappa) =\sum_{|u|=1}e^{-\kappa V(u)}  \leq X^{\kappa}$.  Consequently, 
    \begin{equation}
     \sup_{x\in\R_+}\Sigma_{\eqref{eq-sum-2}} \lesssim \sum_{k=t+1}^{+\infty} \mathbf{E}_{\mathbf{Q}^{\kappa,*}} \left[(1+2\ln \Delta)\mathbf{P}\left(\max_{1 \leq j \leq k-1}S^{(\kappa)}_j<0, -S^{(\kappa)}_{k-1}\leq  2 \ln \Delta+ \zeta \mid \Delta, \zeta \right) \right] 
    \end{equation}
    Observe from renewal theory that
    \begin{align*}
    & \sum_{k=1}^{+\infty} \mathbf{E}_{\mathbf{Q}^{\kappa,*}} \left[(1+2\ln \Delta)\mathbf{P}\left(\max_{1 \leq j \leq k-1}S^{(\kappa)}_j<0, -S^{(\kappa)}_{k-1}\leq  2 \ln \Delta+ \zeta \mid \Delta, \zeta \right) \right]  \\
    &\lesssim \mathbf{E}_{\mathbf{Q}^{\kappa,*}} \left[ (1+2\ln \Delta) (1+ 2 \ln \Delta+ \zeta )   \right] \leq  \mathbf{E}_{\mathbf{Q}^{\kappa,*}} \left[ (1+2\ln X ) (1+ 2 \ln X+ |V(w_{1})|)   \right]   \\
    &\leq   \mathbf{E}  \left[  \sum_{|z|= 1} (1+ |V(z)|)e^{ -\kappa V(z) } (1+ 2\ln X)^2  \right] \lesssim  \mathbf{E}  \left[    X^{\kappa} \ln^{2} X \right]  <\infty  
    \end{align*}
      by our assumption \ref{cond4}. This suffices to deduce \eqref{eq-sum-2}.  
\end{proof}
 
\begin{proof}[Proof of Lemma~\ref{badmin}]
  Observe that if $G_{n}\left(b\right)$ does not hold then there is $j \leq n-b$ and $z \in \Omega(w_{j})$ satisfying $\mathbf{M}^{(z)} \leq -[ V(z)-V\left(w_n\right)-K_{g}]$. Conditionally on the spine $\mathcal{B}_{n}=\sigma((w_k,V(w_{k})), (V(z): z \in \Omega(w_{k})), 1 \leq k \leq n )$ by the branching property and Lemma \ref{BRWroughbd}  we have  
 \begin{align*}
   \mathbf{Q}^{\kappa,*}    \left(    G_{n} (b)^c \mid \mathcal{B}_{n}   \right) &\leq \min \left\{ 1,   \sum_{j=1}^{n-b} \sum_{z \in \Omega(w_{j})} e^{-\kappa [V(z)-V(w_{n})]}   \right\} =   \min \left\{ 1,   \sum_{j=1}^{n-b}   e^{\kappa [ V(w_{n}) -V(w_{j-1}) ]} \Delta_{j}(\kappa)   \right\} \\
  & \leq  \sum_{j=1}^{n-b}   e^{\kappa [ V(w_{n}) -V(w_{j-1}) ]/2}  + \sum_{j=1}^{n-b} \ind{  2\ln \Delta_{j}(\kappa) > - \kappa [ V(w_{n}) -V(w_{j-1}) ]  }
 \end{align*}
 where   $\Delta_{j}(\kappa):= \sum_{z \in \Omega(w_{j})} e^{- \kappa \Delta V(z) }  $. Then it suffices to show that as $b \to \infty$, 
 \begin{equation*}
  \sup_{x\in \R_+} \sum_{n \geq b+1}  \sum_{j=1}^{n-b} \E_{\mathbf{Q}^{\kappa,*}} \left[   e^{\kappa [ V(w_{n}) -V(w_{j-1}) ]/2} \ind{  V(w_n)\in I(x),V(w_n)<\underline{V}(w_{[0,n-1]})  } \right]\to  0
 \end{equation*}
 and 
 \begin{equation*}
  \sup_{x\in\R_+}   \sum_{n \geq b+1}  \sum_{j=1}^{n-b}  \mathbf{Q}^{\kappa,*} \left(   V(w_n)\in I(x),V(w_n)<\underline{V}(w_{[0,n-1]}) , 2\ln \Delta_{j}(\kappa) > - \kappa [ V(w_{n}) -V(w_{j-1}) ]   \right)\to  0
 \end{equation*} 
These two convergences follow from the same arguments as in  the proof of  \eqref{eq-sum-1} and \eqref{eq-sum-2}. We  omit  the details.
  \end{proof}

\section{Proofs of Theorems \ref{thmtailWM} and   \ref{CondBRW}}

\subsection{Tail probabilities: Theorem \ref{thmtailWM}.}
This section is devoted to proving Theorem \ref{thmtailWM}  which is direct consequences of Theorem \ref{BRWcvg}.

Recall that $\P(\tM\le -x)\sim c_\tM e^{-\kappa x}$ with some constant $c_\tM>0$. Moreover, Theorem \ref{BRWcvg} shows that conditioned on $\{\tM\le -x\}$, $e^{-x}W_\infty$ converges weakly to $e^UZ$ with $U$ and $Z$ independent. Here $U$ has exponential distribution of parameter $\kappa$ and $Z\ge 0$.

\begin{lem}\label{BRWtailM}
Under the assumptions \ref{cond1},\ref{cond2}, \ref{cond3} and \ref{cond4}, we have
\begin{equation}
\limsup_{\epsilon\downarrow0}\limsup_{x\rightarrow\infty}x^{\kappa}\P(W_\infty\leq \epsilon x, e^{-\tM}\geq x)=0.
\end{equation}
\end{lem}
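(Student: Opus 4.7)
Setting $y = \ln x$, the goal is
\[
\limsup_{\epsilon \downarrow 0}\ \limsup_{y \to \infty}\ e^{\kappa y}\, \P(W_\infty \le \epsilon e^y,\, \tM \le -y) = 0.
\]
On $\{\tM \le -y\}$ we have $\W^{\tM} = W_\infty e^{\tM} \le W_\infty e^{-y}$, so $\{W_\infty \le \epsilon e^y,\, \tM \le -y\} \subseteq \{\W^{\tM} \le \epsilon,\, \tM \le -y\}$; moreover, since $\W^{u^*,\le t}$ is a partial sum of the non-negative decomposition~\eqref{eMW}, $\W^{u^*,\le t} \le \W^{\tM}$ for every $t \ge 1$, so the event is further contained in $\{\W^{u^*,\le t} \le \epsilon,\, \tM \le -y\}$.

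To control this, I would replay the spinal-decomposition argument used to prove Proposition~\ref{BRWmaincvg}: restrict to $|u^*| \in J(y)$ via Lemma~\ref{generationofM} (at negligible cost), apply Lyons' change of measure at parameter $\kappa$ on the generation of $u^*$, and time-reverse the spine. Bounding $\ind{\cdot \le \epsilon}$ above by a bounded continuous function supported on $[0, 2\epsilon]$ and applying Lemma~\ref{BRWtruncatedcvg} to this $\phi_1$ yields
\[
\limsup_{y \to \infty}\ e^{\kappa y}\, \P\!\bigl(\W^{u^*,\le t} \le \epsilon,\, \tM \in I(y),\, |u^*| \in J(y)\bigr)\ \lesssim\ \lim_{b \to \infty} \E_{\Q^{\kappa,*}}\!\Bigl[\ind{\overleftarrow{\W}^{\le t} \le 2\epsilon}\, \Xi_b\Bigr],
\]
where $\Xi_b$ denotes the reversed-spine weights from~\eqref{Ctphig} (evaluated at $g \equiv 0$, so $\Xi_b = \ind{\tM^{(w_0)} \ge 0} \prod_{\ell=1}^b \ind{\overleftarrow{A}_\ell} \le 1$). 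It then suffices to show this right-hand side vanishes upon first sending $t \to \infty$ and then $\epsilon \to 0$.

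Recall that
\[
\overleftarrow{\W}^{\le t} = W^{(w_0)}_\infty + \sum_{\ell=1}^{t+1} e^{V(w_\ell)} \sum_{z \in \Omega(w_\ell)} e^{-\Delta V(z)}\, W^{(z)}_\infty,
\]
and, conditional on the reversed spine and the sibling displacements, the copies $W^{(\cdot)}_\infty$ are independent and distributed as $W_\infty$. Under the non-lattice Assumption~\ref{cond3}, the only atom of $W_\infty$ is at $0$, with mass $q := \P(\T < \infty)$ (cf.~\cite{Liu00}); hence $\P(0 < W_\infty \le \delta) \to 0$ as $\delta \to 0$ by $\sigma$-additivity. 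Splitting
\[
\ind{\overleftarrow{\W}^{\le t} \le 2\epsilon}\ =\ \ind{\overleftarrow{\W}^{\le t} = 0}\ +\ \ind{0 < \overleftarrow{\W}^{\le t} \le 2\epsilon},
\]
the second summand contributes $O_t(\epsilon)$ as $\epsilon \to 0$ for each fixed $t$, by $\sigma$-additivity and the finiteness of $N_t := 1 + \sum_{\ell=1}^{t+1}|\Omega(w_\ell)|$.

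\textbf{Main obstacle.} The delicate term is $\ind{\overleftarrow{\W}^{\le t} = 0}$, which holds iff every one of the $N_t$ independent subtrees is extinct; conditional on the spine, its probability is $q^{N_t}$. Under $\Q^{\kappa,*}$ the $|\Omega(w_\ell)|$ are i.i.d.\ (Proposition~\ref{BRWchangeofp}(iii)) with $\E_{\Q^{\kappa,*}}[|\Omega(w_1)|] > 0$---positivity follows from supercriticality $\psi(0) > 0$, which forces $\P(|\L| > 1) > 0$---so the strong law of large numbers gives $N_t \to \infty$ $\Q^{\kappa,*}$-a.s. Since $q < 1$ (again by supercriticality) and $\Xi_b \le 1$, dominated convergence yields $\E_{\Q^{\kappa,*}}[\ind{\overleftarrow{\W}^{\le t} = 0}\, \Xi_b] \le \E_{\Q^{\kappa,*}}[q^{N_t}] \to 0$ as $t \to \infty$. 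Combining with the $O_t(\epsilon)$ bound on the continuous piece and sending first $t \to \infty$ and then $\epsilon \to 0$ yields the claimed vanishing and hence the lemma.
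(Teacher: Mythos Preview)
Your argument is correct and takes a genuinely different route from the paper's proof.

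\textbf{Comparison.} Both proofs share the spinal change of measure at parameter~$\kappa$, the restriction $|u^*|\in J(y)$, and the time reversal, and both ultimately rest on the dichotomy ``all nearby subtrees die out'' versus ``some nearby subtree survives''. The packaging, however, is different. The paper redoes the change of measure by hand and splits the event $\{W_\infty\le\varepsilon e^y\}$ according to whether, among the last $b$ generations of the spine, there exists a sibling $z$ with $\Delta V(z)\le K$ \emph{and} $W_\infty^{(z)}>0$; the ``survival'' piece is bounded by $\P(W_\infty^+\le\varepsilon e^{2K})+\E[1+|S_{b+1}^{(\kappa)}|;\underline S^{(\kappa)}_{[1,b+1]}\le-K]$ and the ``extinction'' piece by $(1+b)\,c_{K_0}^{(b+1)/2}$ with $c_{K_0}<1$, before letting $K\to\infty$ then $b\to\infty$. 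You instead invoke Lemma~\ref{BRWtruncatedcvg} as a black box: since $\W^{u^*,\le t}\le\W^{\tM}\le\epsilon$ on the event in question, the limit formula~\eqref{Ctphig} (with $g\equiv0$, $\phi_0\equiv1$, and $\phi_1$ a continuous majorant of $\ind{\cdot\le\epsilon}$) bounds $\limsup_y e^{\kappa y}\P(\cdots)$ by $C\,\Q^{\kappa,*}(\overleftarrow{\W}^{\le t}\le2\epsilon)$ with $C$ independent of $t$ (use $\overleftarrow{\mathtt{mult}}\ge1$, $\Xi_b\le1$, and $\cb(\cdot)\le C_\kappa\,U_w^{(\kappa),+}(\R_+)<\infty$). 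Optimizing over $t$ and then sending $\epsilon\to0$ reduces everything to $\Q^{\kappa,*}(\overleftarrow{\W}^{\le\infty}=0)=0$, which follows from $q<1$ and $N_t\to\infty$. Your route is shorter and more modular---it reuses the convergence machinery rather than repeating it---while the paper's route is self-contained and yields explicit error bounds.

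\textbf{Two minor remarks.} First, your ``$O_t(\epsilon)$'' should read ``$o_\epsilon(1)$ for each fixed $t$'': you only need $\P(0<X\le2\epsilon)\to0$ as $\epsilon\to0$, which holds for any random variable by continuity from above; the appeal to the non-lattice assumption and the atom structure of $W_\infty$ is unnecessary. Second, your order of limits is correct, but the justification should be stated as: the bound $C\,\Q^{\kappa,*}(\overleftarrow{\W}^{\le t}\le2\epsilon)$ holds for every $t$ and is decreasing in $t$ (since $\overleftarrow{\W}^{\le t}$ is nondecreasing), so one may take $\inf_t=\lim_{t\to\infty}$ first and then let $\epsilon\downarrow0$.
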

\begin{rem}
Lemma \ref{BRWtailM} implies that  $\P(Z>0)=1$. And Lemma \ref{bdcondmomWD} shows that $Z\in L^\kappa$.
\end{rem}

The proof of Lemma~\ref{BRWtailM} will be given in the end of this subsection. 

\begin{proof}[Proof of Theorem \ref{thmtailWM}]
For any $a>0$ and $x>1$, observe that
\begin{align*}
&x^\kappa\P(W_\infty\geq ax, \tM\leq -\ln x)\\
&=\P(e^{-\tM-\ln x}\mathcal{W}^\tM\geq a\vert \tM+\ln x\leq 0)\P(\tM\le -\ln x)x^{\kappa}.
\end{align*}
It then follows from Theorem \ref{BRWcvg} that
\begin{equation*}
\lim_{x\rightarrow\infty}x^\kappa\P(W_\infty\geq ax, \tM\leq -\ln x) =\P(e^UZ\ge a)c_\tM=:\gamma(a),
\end{equation*}
with 
\begin{equation}\label{gammaa}
\gamma(a)=c_\tM \E\left[\left(1\wedge \frac{Z}{a}\right)^{\kappa}\right], \forall a>0.
\end{equation}
Notice that by Lebesgue's dominated convergence theorem, $\gamma$ is continuous on $\R_+^*$. It is easy to see that $\gamma(0+)=c_\tM$ and that
\[
\lim_{a\downarrow 0+}\gamma(\frac1a)a^{-\kappa}=c_\tM\E[Z^\kappa].
\]

Next, we stem from Theorem \ref{BRWcvg} that conditioned on $\{\tM\le -x\}$, $\frac{D_\infty}{xe^x}$ converges weakly to $\frac{e^UY}{\psi'(\kappa)}$ with $U$ and $Y=(\psi'(\kappa)-\psi'(1))Z$ independent. One thus sees that for any $a>0$,
\begin{align*}
\lim_{x\to\infty}x^\kappa \P(D_\infty \ge  x\ln x, \tM\le -\ln(a x))&= c_\tM a^{-\kappa}\P(e^UY\ge \frac1a \psi'(\kappa))\\
& =c_\tM\E\left[\left(\frac1a \wedge \frac{Y}{\psi'(\kappa)}\right)^\kappa\right].
\end{align*}
By use of  
 \eqref{eq-desired-bound-D}, taking $\epsilon_0 < \frac{1}{2} \delta_0$, we get  that for $\delta>0$, 
\begin{align*}
\P(D_\infty \ge x\ln x, e^{-\tM}< \delta x )& \leq  \frac{\E \left[  |D_{\infty}|^{\kappa+\epsilon_0} 1_{ \{\mathbf{M} \geq -\ln(\delta x) \}} \right]}{(x \ln x)^{\kappa+\epsilon_0}} \\
& \lesssim  \frac{ (\delta x)^{\epsilon_0} (1+\ln\ln x)^{\kappa+\epsilon_0 } }{(x \ln x)^{\kappa+\epsilon_0}}= 
 o(x^{-\kappa}).
\end{align*}

As a result, we obtain that as $x\to\infty$,
\[
x^\kappa\P(D_\infty \ge x\ln x)\to c_\tM\frac{\E[Y^\kappa]}{(\psi'(\kappa))^\kappa}=c_\tM [\frac{\psi'(\kappa)-\psi'(1)}{\psi'(\kappa)}]^\kappa \E[Z^\kappa].
\]
Let 
\[
c_D:=c_\tM [\frac{\psi'(\kappa)-\psi'(1)}{\psi'(\kappa)}]^\kappa \E[Z^\kappa].
\]
Then $c_D\in(0,\infty)$ as $Z>0$ a.s. and $\E[Z^\kappa]<\infty$. By taking $z=x\ln x$, we get \eqref{tailD}.
\end{proof}

\begin{proof}[Proof of Lemma~\ref{BRWtailM}]
  For small $\varepsilon\in(0,1)$, let us consider $P_\varepsilon(x):=\mathbf{P}(W_\infty\leq\varepsilon e^{x},\mathbf{M}\in I(x))$ for $x \geq  1$ and prove that
  \begin{equation*}
    \lim_{\varepsilon\downarrow0} \limsup_{x \to \infty} e^{\kappa x}P_{\varepsilon}(x)=0. 
  \end{equation*}  
  Then the desired result follows from the dominated convergence theorem (as  $e^{\kappa x}P_{\varepsilon} $ is bound  by Lemma \ref{BRWroughbd})
  \begin{align*}
    \limsup_{\epsilon\downarrow0} \limsup_{x \to \infty} e^{\kappa x}\P(W_\infty\leq \epsilon e^{x}, \mathbf{M} \leq -x)=  \limsup_{\epsilon\downarrow0} \limsup_{x \to \infty}  \sum_{j \geq 0} e^{\kappa x} P_{\epsilon}(x+j) = 0.
  \end{align*} 
  
   Recall that  $J(x)$ is defined in \eqref{DefJx} with $1 \ll b(x) \ll \sqrt{x}$. 
   Lemma \ref{generationofM} shows that $|u^*|$ stays in $J(x)$ with high probability. Thus we have 
    \begin{align*}
     &  e^{\kappa x}P_{\varepsilon}(x) \leq\sum_{n\in J(x)}e^{\kappa x} \mathbf{E}\left[\sum_{|u|=n}\mathbf{1}_{\{V(u)=\mathbf{M}<\mathbf{M}_{n-1},V(u)\leq-x\}}\mathbf{1}_{\{W_\infty\leq\varepsilon e^x\}}\right] + o(1) \\
      & \leq  \sum_{n\in J(x)}  \mathbf{E}_{\mathbf{Q}^{\kappa,*}_{\vert n} \otimes \P} \left[ e^{\kappa V(w_{n})+ \kappa x}  \mathbf{1}_{\{V(w_{n} ) < \underline{V}(w_{[1,n-1]}),V(w_{n})\in I(x) \}} \mathbf{1}_{\left\{\sum_{j=1}^n\sum_{z\in\Omega (w_j)}e^{-V(z)}W_\infty^{(z)}\leq\varepsilon e^{x} \right\}}  \right] +o(1)
    \end{align*} 
   where in the last inequality we made a  change of measure  (Proposition \ref{BRWchangeofp2}).   Note that $q:=\mathbf{P}(W_\infty>0)>0$ and $ \mathbf{P}(\min_{|u|=1} V(u) \leq K )>0$ for every $K>0$ sufficiently large. Consequently, for any fixed $b>0$, 
   \begin{align*}
    & e^{\kappa x} P_{\epsilon}(x) \leq  \sum_{n\in J(x)}  \mathbf{E}_{\mathbf{Q}^{\kappa,*}_{\vert n} \otimes \P} \left[   \mathbf{1}_{\{V(w_{n} ) < \underline{V}(w_{[1,n-1]}),V(w_{n})\in I(x) \}} \prod_{j=n-b}^{n} \ind{ \not\exists z \in \Omega(w_{j}), \Delta V(z)\leq K \text{ and } W^{(z)}_{\infty}>0}    \right] \\
    &+   \sum_{n\in J(x)}  \mathbf{E}_{\mathbf{Q}^{\kappa,*}_{\vert n} \otimes \P} \left[   \mathbf{1}_{\{V(w_{n} ) < \underline{V}(w_{[1,n-1]}),V(w_{n})\in I(x) \}}  \ind{ W^{+}_{\infty } \min\limits_{n-b\leq j\leq n} e^{V(w_{n})-V(w_{j-1})-K} \leq \epsilon} \right]  +o(1) \\
    &=: E(x, b,K; \mathrm{extinction}) + E(x,b,K; \mathrm{survival}) + o(1)
   \end{align*}
   where $W^{+}_{\infty}$ is distributed as $\mathbf{P}(W_{\infty} \in \cdot \mid W_{\infty} >0 )$ and is independent of $\mathcal{B}_{n}$. By time reversing we have 
   \begin{align*}
    E(x,b,K; \mathrm{survival}) & = \sum_{n \in J(x)} \P \left[  S^{(\kappa)}_{n} \in I(x) , S^{(\kappa)}_{n} < \underline{S}^{(\kappa)}_{[1,n-1]}, W^{+}_{\infty} \min_{n-b\leq j \leq n}\exp (S^{(\kappa)}_{n}-S^{(\kappa)}_{j-1} ) \leq \epsilon e^{K} \right] \\
    &=  \sum_{n \in J(x)} \P \left[  S^{(\kappa)}_{n} \in I(x) , \overline{S}^{(\kappa)}_{[1,n]} <0, W^{+}_{\infty} \exp ( \underline{S}^{(\kappa)}_{[1,b+1]}) \leq \epsilon e^{K} \right],
   \end{align*}
  where under $\P$, $W^{+}_{\infty}$ is independent to $S^{(\kappa)}_{n}$. Applying Markov property at time $b+1$ gives that
  \begin{align*}
    E(x,b,K; \mathrm{survival}) & \leq \E \left[    \sum_{n\geq 0}\P_{S^{(\kappa)}_{b+1}} \left(   S^{(\kappa)}_{n} \in I(x) , \overline{S}^{(\kappa)}_{[1,n]} <0 \right)  ;   W^{+}_{\infty} \exp ( \underline{S}^{(\kappa)}_{[1,b+1]}) \leq \epsilon e^{K}   ,  \overline{S}^{(\kappa)}_{[1,b+1]} <0  \right] \\
    & \lesssim  \E \left[ 1+|S^{(\kappa)}_{b+1}|;   W^{+}_{\infty} \exp ( \underline{S}^{(\kappa)}_{[1,b+1]}) \leq \epsilon e^{K}   ,  \overline{S}^{(\kappa)}_{[1,b+1]} <0  \right] \\
    & \leq   \E \left( 1+|S^{(\kappa)}_{b+1}| \right) \P \left(    W^{+}_{\infty} \leq \epsilon e^{2 K} \right)   +    \E \left( 1+|S^{(\kappa)}_{b+1}| ;\underline{S}^{(\kappa)}_{[1,b+1]} \leq -K \right) 
  \end{align*}
  where in the second inequality we used  \eqref{eq-bound-Rxa}. Then, using the fact that  $\mathbf{E} (1+|S_{b+1}^{(\kappa)}|)^2  \lesssim (1+b)^{2}$ (as $ \mathbf{E} (S_{1}^{(\kappa)})^2 <\infty $) , we see that
  \begin{equation*}
    \lim_{\varepsilon\downarrow0} \limsup_{x \to \infty}   E(x,b,K; \mathrm{survival}) \lesssim \frac{(1+b)^2}{1+K} .
  \end{equation*}
  
  Now it remains to bound $E(x, b,K; \mathrm{extinction})$. Write $\overline{V}(w_{[1,n]}):=\max_{1\le j\le n}V(w_j)$.
   First operating a time reversal and then applying Markov property at time $b+1$,  we get that 
   \begin{align*}
   E(x, b,K; \mathrm{extinction}) &  =\sum_{n \in J(x)} \E_{\mathbf{Q}^{\kappa,*}_{\vert n} \otimes \P} \left[   \ind{ V(w_{n}) \in I(x), \overline{V}(w_{[1,n]}) < 0 } \prod_{j=1}^{b+1} \ind{ \not\exists z \in \Omega(w_{j}), \Delta V(z) \leq K \text{ and } W^{(z)}_{\infty} >0 }  \right] \\
    & = \E_{\mathbf{Q}^{\kappa,*}_{\vert n} \otimes \P} \left[  \prod_{j=1}^{b+1} \prod_{z \in \Omega(w_{j})} \left( 1- q \ind{\Delta V(z) \leq K} \right) \ind{ \overline{V}(w_{[0,b+1]}) < 0 }  \right.  \\
    & \qquad  \qquad   \times  \left.  \sum_{n \in J(x)} \P_{V(w_{b+1})}  \left(   V(w_{n-b-1}) \in I(x), \overline{V}(w_{[1,n-b-1]}) < 0  \right)  \right]. 
   \end{align*}
  First using  inequality $(1-x) \leq e^{-x }$ and    \eqref{eq-bound-Rxa} and then by Cauchy-Schwarz inequality  we get that 
  \begin{align*}
    E(x, b,K; \mathrm{extinction}) 
    & \lesssim 
    \E_{\mathbf{Q}^{\kappa,*} } \left[  e^{- q \sum_{j=1}^{b+1} \sum_{z \in \Omega(w_{j})} \ind{ \Delta V(z) \leq K } }\left(  1 + | V(w_{b+1}) |  \right)\ind{ \overline{V}(w_{[0,b+1]}) < 0 }  \right] \\
    & \leq  \E_{\mathbf{Q}^{\kappa,*} } \left[  e^{- 2 q \sum_{j=1}^{b+1} \sum_{z \in \Omega(w_{j})}   \ind{ \Delta V(z) \leq K } } \right]^{1/2} \E\left[ \left(  1 + | S^{(\kappa)}_{b+1}   |  \right)^{2} \ind{ \overline{S}^{(\kappa)} _{[1,b+1]} <0 }  \right]^{1/2} . 
  \end{align*}
  Observe that $ \left( \sum_{z \in \Omega(w_{j})}   \ind{ \Delta V(z) \leq K }  : 1 \leq j \leq b+1\right) $ are i.i.d., 
  \begin{align*}
    \E_{\mathbf{Q}^{\kappa} } \left[  e^{- 2 q \sum_{j=1}^{b+1} \sum_{z \in \Omega(w_{j})}   \ind{ \Delta V(z) \leq K } } \right]^{1/2} = \E_{\mathbf{Q}^{\kappa} } \left[  e^{- 2 q  \sum_{z \in \Omega(w_{1})}   \ind{   V(z) \leq K } } \right]^{(b+1)/2} =: c_{K}^{(b+1)/2} . 
  \end{align*}
  Notice that  $c_{K}$ is decreasing in $K$. 
  Provided that $\P( \min_{|z|=1} V(z) \leq K_0)>0$,  for large $K$ we have 
  \begin{equation*}
    c_{K} \leq  c_{K_0} = \E  \left[ \sum_{|u|=1} e^{-\kappa V(u)}  e^{- 2 q  \sum_{|z|=1, z \neq u}   \ind{   V(z) \leq K_0 } } \right] < \E  \left[ \sum_{|u|=1} e^{-\kappa V(u)}    \right]=1 . 
  \end{equation*}  
  Since  $\mathbf{E} (1+|S_{b+1}^{(\kappa)}|)^2  \lesssim (1+b)^{2}$, we get that  
  \begin{equation*}
    \lim_{\varepsilon\downarrow0} \limsup_{x \to \infty}  E(x, b,K; \mathrm{extinction})  \lesssim    (1+b) c_{K_0}^{(b+1)/2}
  \end{equation*}
  
   In summary we have 
   \begin{align*}
    \lim_{\varepsilon\downarrow0} \limsup_{x \to \infty}  e^{\kappa x} P_{\epsilon}(x) \lesssim \frac{(1+b)^2}{1+K} +   (1+b) c_{K_0}^{(b+1)/2}.
   \end{align*}
   Letting $K \to \infty $ first then letting $ b \to \infty$  yields the desired result. 
  \end{proof}

\subsection{Weak convergence conditioned on $W_\infty \ge x$: Proof of Theorem \ref{CondBRW}.}  

\noindent Recall that $\P(W_\infty\ge x)\sim C_0 x^{-\kappa}$. Let us first prove that $C_0=c_\tM\E[Z^\kappa]$ here. Note that for any $A\ge 1$,
\begin{align*}
\P(W_\infty\ge x)&=\P(\mathcal{W}^\tM\ge xe^{\tM})\\
&=\P(\tM+\ln x\le \ln \mathcal{W}^\tM\le A)+\P(\tM+\ln x\le \ln \mathcal{W}^\tM, \ln\mathcal{W}^\tM>A).
\end{align*}
For the second term on the right hand side, observe that
\begin{align*}
&\P(\tM+\ln x\le \ln \mathcal{W}^\tM, \ln\mathcal{W}^\tM>A)\\
&=\sum_{\ell=0}^\infty \P(\tM+\ln x \le \ln \mathcal{W}^\tM,  A+\ell<\ln\mathcal{W}^\tM\le A+\ell+1)\\
&\le \sum_{\ell\ge 0}\P(\tM \le A+\ell+1-\ln x, \mathcal{W}^\tM \ge e^{A+\ell}),
\end{align*}
which by Lemma \ref{BRWroughbd} is bounded by
\begin{align*}
&  \sum_{\ell\ge 0} e^{-\kappa \ln x+\kappa (A+\ell+1)}\P(\mathcal{W}^\tM \ge e^{A+\ell}\vert \tM\le A+\ell+1-\ln x)\\
& \le  x^{-\kappa}\sum_{\ell\ge 0}e^{\kappa (A+\ell+1)}e^{-(\kappa+\delta)(A+\ell)}\sup_{t\in\mathbb{R}_+}\E[(\mathcal{W}^\tM)^{\kappa+\delta}\vert \tM\le -t]\\
& \lesssim x^{-\kappa}e^{-\delta A},
\end{align*}
where the last inequality is obtained by \eqref{momWM}. This implies that
\[
\limsup_{A\to\infty}\limsup_{x\to\infty}x^\kappa \P(\tM+\ln x\le \ln \mathcal{W}^\tM, \ln\mathcal{W}^\tM>A)=0.
\]
On the other hand, by writing $t=\ln x-A$, we have 
\[
\P(\tM+\ln x\le \ln \mathcal{W}^\tM\le A)=\P(\tM\le -t)\P(\ln \mathcal{W}^\tM-(\tM+t)\ge A \ge \ln \mathcal{W}^\tM\vert \tM\le -t)
\]
By use of Theorem \ref{BRWcvg}, 
\[
\lim_{A\to\infty}\lim_{x\to\infty}x^\kappa\P(\tM+\ln x\le \ln \mathcal{W}^\tM\le A)=\lim_{A\to\infty} c_\tM \E[Z^\kappa\ind{\ln Z\le A}]=c_\tM \E[Z^\kappa].
\]
It then follows that
\[
\lim_{x\to\infty} x^\kappa\P(W_\infty\ge x)=c_\tM \E[Z^\kappa].
\]
As $C_0=\lim_{x\to\infty}x^\kappa\P(W_\infty\ge x)$, we conclude that
\[
C_0=c_\tM\E[Z^\kappa].
\]
Recall that $\lim_{a\to0+}\gamma(\frac1a)a^{-\kappa}=c_\tM \E[Z^\kappa]$. Therefore, we obtain that
\begin{equation}\label{largeWsmallM}
  \begin{aligned}
&\limsup_{\varepsilon\downarrow0}\limsup_{x\to\infty}x^\kappa\P(W_\infty\ge x, \M_e< \varepsilon x) \\
&= \lim_{\varepsilon\downarrow0}   \lim_{x\to\infty}x^\kappa  \left[  \P(W_\infty\ge x )    - \P(W_\infty\ge x, \M_e \geq  \varepsilon x)  \right] = c_\tM \E[Z^\kappa] - \lim_{\varepsilon\downarrow0} \epsilon^{-\kappa} \gamma(\frac{1}{\epsilon}) =0.
\end{aligned}
\end{equation} 

Now we are ready to prove Theorem \ref{CondBRW}.

\begin{proof}[Proof of Theorem \ref{CondBRW}]
Note that for any bounded and continuous function $h:\mathbb{R}^3\to\mathbb{R}_+$, 
\begin{align*}
&\E[e^{-\sum_{u\in\T}g(V(u)-\tM)}h(\frac{W_\infty}{x}, \frac{D_\infty}{x\ln x},\tM+\ln x)\ind{W_\infty \ge x}] \\
&=\E[\exp\{-\sum_{u\in\T}g(V(u)-\tM)\}h(\frac{W_\infty}{x}, \frac{D_\infty}{x\ln x}, \tM+\ln x)\ind{W_\infty \ge x, \M_e\ge \varepsilon x}]+o_{x,\varepsilon}(1) x^{-\kappa}\\
&=\P(\tM\le -t)\E[e^{-\sum_{u\in\T}g(V(u)-\tM)}h(\frac{W_\infty}{x}, \frac{D_\infty}{x\ln x},\tM+\ln x)\ind{W_\infty \ge x}\mid \tM\le -t]+o_{x,\varepsilon}(1) x^{-\kappa},
\end{align*}
where we used \eqref{largeWsmallM} in the first equality and we set $t=\ln(\varepsilon x)$  in the second. By use of Theorem \ref{BRWcvg}, 
\begin{align*}
&\lim_{x\to\infty}\E[\exp\{-\sum_{u\in\T}g(V(u)-\tM)\}h(\frac{W_\infty}{x}, \frac{D_\infty}{x\ln x}, \tM+\ln x)\ind{W_\infty \ge x}\mid \tM\le -t]\\
&=\E\left[e^{-\int g(x)\mathcal{E}_\infty(\d x)}h(\varepsilon e^U Z, \varepsilon \frac{e^UY}{\psi'(\kappa)}, -U-\ln\varepsilon)\ind{e^U Z\ge \frac{1}{\varepsilon}}\right]\\
&=\E\left[\int_0^\infty \kappa e^{-\kappa u}e^{-\int g(x)\mathcal{E}_\infty(\d x)}h(\varepsilon e^u Z, \varepsilon \frac{e^uZ(\psi'(\kappa)-\psi'(1))}{\psi'(\kappa)},-u-\ln\varepsilon)\ind{e^u Z\ge \frac{1}{\varepsilon}}\d u\right]
\end{align*}
which by change of variables $r=u+\ln Z+\ln \varepsilon$ is equal to
\[
\E\left[\int_{(\ln Z+\ln \varepsilon)_+}^\infty \kappa e^{-\kappa r} Z^\kappa \varepsilon^\kappa e^{-\int g(x)\mathcal{E}_\infty(\d x)}h(e^r, \frac{\psi'(\kappa)-\psi'(1)}{\psi'(\kappa)}e^r,\ln Z-r)\d r\right].
\]
Note that $Z^\kappa\in L^1(\P)$. By \eqref{tailM}, letting $x\to\infty$ and then $\varepsilon\to0+$, we thus get that
\begin{align*}
&\lim_{x\to\infty}x^\kappa\E[\exp\{-\sum_{u\in\T}g(V(u)-\tM)\}h(\frac{W_\infty}{x}, \frac{D_\infty}{x\ln x}, \tM+\ln x)\ind{W_\infty \ge x}]\\
&=  c_\tM \E\left[Z^\kappa \int_0^\infty \kappa e^{-\kappa r}  e^{-\int g(x)\mathcal{E}_\infty(\d x)}h(e^r, \frac{\psi'(\kappa)-\psi'(1)}{\psi'(\kappa)}e^r, \ln Z-r)\d r\right].
\end{align*}
This implies that
\begin{align*}
&\lim_{x\to\infty}\E\left[\exp\{-\sum_{u\in\T}g(V(u)-\tM)\}h(\frac{W_\infty}{x}, \frac{D_\infty}{x\ln x}, \tM+\ln x) \mid  W_\infty \ge x\right]\\
&= \frac{1}{\E[Z^\kappa]}\E\left[Z^\kappa \int_0^\infty \kappa e^{-\kappa r}  e^{-\int g(x)\mathcal{E}_\infty(\d x)}h(e^r, \frac{\psi'(\kappa)-\psi'(1)}{\psi'(\kappa)}e^r, \ln Z-r)\d r\right]
\end{align*}
since $\P(W_\infty \ge x)\sim C_0 x^{-\kappa}$ with $C_0=c_\tM \E[Z^\kappa]$. This suffices to conclude Theorem \ref{CondBRW}.
\end{proof}

\appendix

\section{}

\subsection{Proofs of Lemmas in Section \ref{RW}}
\label{App}

\begin{proof}[Proof of Lemma \ref{Rfunctioncvg}] $ $

\noindent \textbf{Proof of \eqref{RWsumecvg}}. It suffices to show that there exists some constant $C_\kappa>0$ such that
 \begin{align*} 
  & \lim_{x \to \infty} e^{\kappa x} \sum_{|\psi'(\kappa) n - x | \leq b(x) \sqrt{x}}   \phi_0 \left( \frac{\sqrt{\psi'(\kappa)}}{\sqrt{x}} \left[ n- \frac{x}{\psi'(\kappa)} \right] \right) \mathbf{E}_{-a} \left[  e^{\kappa S^{(\kappa)}_{n}} ; \max_{1 \leq j \leq n} S^{(\kappa)}_{j}<0, S^{(\kappa)}_{n} \in I(x) \right] \\
  &=  C_{\kappa} U^{(\kappa),+}_w([0,a))\int_{-b}^{b} \phi_0 \left(z \right) 
  \frac{1}{  \sqrt{2 \pi \psi''(\kappa) / \psi'(\kappa)^{2} }} 
  e^{- \frac{   z^{2}   }{2 \psi''(\kappa) /  \psi'(\kappa)^{2} }}   \d z . 
  \end{align*}
  where $ U^{(\kappa), +}_w$ is the renewal measure corresponding to the weak ascending ladder process of $(S_n^{(\kappa)})_{n\ge0}$ and $b:= \lim_{x \to \infty} b(x) \in [0,\infty]$.  
  Applying Fubini's theorem, we have 
  \begin{align*}
  &  \sum_{n\in J(x)} \phi_0(\sqrt{\frac{\psi'(\kappa)}{x}}(n-\frac{x}{\psi'(\kappa)}))\E_{-a}\left[e^{\kappa S^{(\kappa)}_n+\kappa x}\ind{\MS^{(\kappa)}_{[1,n]}<0, S^{(\kappa)}_n\in I(x)}\right] \\
    &=  e^{\kappa x} \, \int_{-\infty}^{-x} \kappa e^{\kappa u} \sum_{n \in J(x)} \phi_0 \left( \frac{\sqrt{\psi'(\kappa)}}{\sqrt{x}} \left[ n- \frac{x}{\psi'(\kappa)} \right] \right)  \mathbf{P}_{-a}\left(S^{(\kappa)}_{n} >u, \max_{1 \leq j \leq n} S^{(\kappa)}_{j} <0, S^{(\kappa)}_n \in I(x) \right) \d u \\
    &=    \int_{0}^{1} \kappa e^{-\kappa \lambda} \sum_{n \in J(x)} \phi_0 \left( \frac{\sqrt{\psi'(\kappa)}}{\sqrt{x}} \left[ n- \frac{x}{\psi'(\kappa)} \right] \right)  \mathbf{P}_{-a}\left(  \max_{1 \leq j \leq n} S^{(\kappa)}_{j} <0, S^{(\kappa)}_{n} \in(-x-\lambda,-x]\right) \d \lambda \\
   &  \qquad +    e^{-\kappa } \sum_{n \in J(x)} \phi_0 \left( \frac{\sqrt{\psi'(\kappa)}}{\sqrt{x}} \left[ n- \frac{x}{\psi'(\kappa)} \right] \right)  \mathbf{P}_{-a}\left(  \max_{1 \leq j \leq n} S^{(\kappa)}_{j} <0, S^{(\kappa)}_{n} \in(-x-1,-x]\right).
  \end{align*}

Note that $ \E[S_1^{(\kappa)}]=-\psi'(\kappa)<0$ and   $ \mathrm{Var}(S^{(\kappa)}_{1})= \psi''(\kappa)$. Let $p(\kappa)=\mathbf{P} \left(  \sup_{j\ge 1}S^{(\kappa)}_{j} <0 \right) $. Then $p(\kappa)\in (0,1)$. Now, we claim that for any $\lambda\in(0,1]$,
  \begin{equation}\label{Localrenewal}
      \mathbf{P}_{-a}\left(  \max_{1 \leq j \leq n} S^{(\kappa)}_{j} <0, S^{(\kappa)}_n \in(-x-\lambda,-x]\right)  = [1+o(1)] p(\kappa) U^{(\kappa), +}_w([0,a)) \frac{\lambda}{  \sqrt{2 \pi \psi''(\kappa) n}} e^{- \frac{(x-\psi'(\kappa) n)^{2} }{2 \psi''(\kappa) n}},
  \end{equation}
  where  $o(1)$ is uniformly in $n \in J(x)$.  As a consequence,
\begin{align*}
  & \sum_{n \in J(x)} \phi_0 \left( \frac{\sqrt{\psi'(\kappa)}}{\sqrt{x}} \left[ n- \frac{x}{\psi'(\kappa)} \right] \right)  \mathbf{P}_{-a}\left(  \max_{1 \leq j \leq n} S^{(\kappa)}_{j} <0, S^{(\kappa)}_{n} \in(-x-\lambda,-x]\right) \\
  &\sim   p(\kappa) U^{(\kappa), +}_w([0,a))\sum_{n \in J(x)} \phi_0 \left( \frac{\sqrt{\psi'(\kappa)}}{\sqrt{x}} \left[ n- \frac{x}{\psi'(\kappa)} \right] \right)    \frac{\lambda}{  \sqrt{2 \pi \psi''(\kappa) n}} e^{- \frac{(x-\psi'(\kappa) n)^{2} }{2 \psi''(\kappa) n}} \\
  &\sim p(\kappa) U^{(\kappa), +}_w[0,a) \int_{x/\psi'(\kappa) -b(x) \sqrt{x}}^{ x/\psi'(\kappa)+b(x)\sqrt{x}} \phi_0 \left( \frac{\sqrt{\psi'(\kappa)}}{\sqrt{x}} \left[ z- \frac{x}{\psi'(\kappa)} \right] \right)    \frac{\lambda}{  \sqrt{2 \pi \psi''(\kappa) z}} e^{- \frac{(x-\psi'(\kappa) z)^{2} }{2 \psi''(\kappa) z}} \d z \\
  & \xrightarrow{x\to\infty } p(\kappa) U^{(\kappa), +}_w([0,a))  \int_{-b}^{b} \phi_0 \left(z \right) 
     \frac{\lambda}{  \sqrt{2 \pi \psi''(\kappa)  }} 
     e^{- \frac{   z^{2}   }{2 \psi''(\kappa) /  \psi'(\kappa)^{2} }}   \d z .
\end{align*}
 Then applying the dominated convergence theorem we get 
 \begin{align*}
 & \lim_{x \to \infty}  \sum_{n\in J(x)} \phi_0(\sqrt{\frac{\psi'(\kappa)}{x}}(n-\frac{x}{\psi'(\kappa)}))\E_{-a}\left[e^{\kappa S^{(\kappa)}_n+\kappa x}\ind{\MS^{(\kappa)}_{[1,n]}<0, S^{(\kappa)}_n\in I(x)}\right]  \\
 &=  \left(\int_{0}^{1} \kappa \lambda e^{-\kappa \lambda}  \d \lambda + e^{-\kappa}\right)  \frac{ p(\kappa)}{\psi'(\kappa)} U^{(\kappa), +}_w([0,a))  \int_{-b}^{b} \phi_0 \left(z \right) 
 \frac{1}{  \sqrt{2 \pi \psi''(\kappa)/\psi'(\kappa)^{2}  }} 
 e^{- \frac{   z^{2}   }{2 \psi''(\kappa) /  \psi'(\kappa)^{2} }}   \d z ,
 \end{align*}
 which completes the proof with $C_{\kappa}:= \left(\int_{0}^{1} \kappa \lambda e^{-\kappa \lambda}  \d \lambda + e^{-\kappa}\right)  \frac{ p(\kappa)}{\psi'(\kappa)}$. 
 
 Now it suffices to show the claim \eqref{Localrenewal}.  
 In fact, take $t>0$ with $\psi(\kappa-t)<0$ and $\epsilon\in(0,1/10)$. By Markov's inequality we see that
   \begin{align*}
  & \left| \mathbf{P}_{-a}\left(  \max_{1 \leq j \leq n} S^{(\kappa)}_{j} <0, S^{(\kappa)}_{n} \in(-x-\lambda,-x]\right) -  \mathbf{P}_{-a}\left(  \max_{1 \leq j \leq n^{\epsilon}} S^{(\kappa)}_{j} <0, S^{(\kappa)}_{n} \in(-x-\lambda,-x]\right) \right|  \\
   & \leq   \sum_{j=n^{\epsilon}}^{n} \mathbf{P}_{-a}\left( S^{(\kappa)}_{j} \geq 0  \right) \leq  \sum_{j=n^{\epsilon}}^{n}  \E [ e^{ t S^{(\kappa)}_{j}} ] = \sum_{j=n^{\epsilon}}^{n} e^{\psi(\kappa-t) j} \lesssim e^{- t n^{\epsilon}} .
   \end{align*}
   Using again Markov's inequality, we have
   \begin{align*}
    & \left|   \mathbf{P}_{-a}\left(  \max_{1 \leq j \leq n^{\epsilon}} S^{(\kappa)}_{j} <0, S^{(\kappa)}_{n} \in(-x-\lambda,-x]\right)- \mathbf{P}_{-a}\left(  \max_{1 \leq j \leq n^{\epsilon}} S^{(\kappa)}_{j} <0,  S^{(\kappa)}_{n^{\epsilon}}> -n^{2\epsilon}, S^{(\kappa)}_{n} \in(-x-\lambda,-x]\right) \right|  \\
     & \leq  \mathbf{P}_{ - a } \left( -S^{(\kappa)}_{n^{\epsilon}} > n^{2\epsilon}   \right) \leq e^{- t n^{2\epsilon}}  \E \left[  e^{-t S^{(\kappa)}_{n^{\epsilon}}} \right] = e^{- t n^{ 2 \epsilon} +  \psi(t+\kappa)n^{\epsilon}} \lesssim e^{- t n^\epsilon }.
     \end{align*}
 By the Markov property and local limit theorem (\cite[Corollary 1]{Stone1967})
\begin{align*}
 &  \mathbf{P}_{-a}\left(  \max_{1 \leq j \leq n^{\epsilon}} S^{(\kappa)}_{j} <0, S^{(\kappa)}_{n^{\epsilon}}>-n^{2\epsilon}, S^{(\kappa)}_{n} \in(-x-\lambda,-x]\right)   \\
 &= \mathbf{E} \left[\mathbf{P}_{S^{(\kappa)}_{n^{\epsilon}}}\left( S^{(\kappa)}_{n-n^{\epsilon}} \in(-x-\lambda,-x]\right) ; S^{(\kappa)}_{n^{\epsilon}} \in( -n^{2\epsilon} + a ,a),\max_{1 \leq j \leq n^{\epsilon}} S^{(\kappa)}_{j}< a \right] \\
 &= [1+o(1)]  \frac{\lambda}{  \sqrt{2 \pi \psi''(\kappa) n}} e^{- \frac{(x-\psi'(\kappa) n)^{2} }{2 \psi''(\kappa) n}} \mathbf{P} \left( S^{(\kappa)}_{n^{\epsilon}} \in( -n^{2\epsilon}+ a ,a)  ,\max_{1 \leq j \leq n^{\epsilon}} S^{(\kappa)}_{j}< a \right).
\end{align*}
It follows from the previous arguments that
\[
\left|\mathbf{P} \left( S^{(\kappa)}_{n^{\epsilon}} \in( -n^{2\epsilon}+ a ,a)  ,\max_{1 \leq j \leq n^{\epsilon}} S^{(\kappa)}_{j}< a \right) -  \mathbf{P} \left(  S^{(\kappa)}_{j} < a \text{ for all } j \geq 1 \right) \right|\lesssim e^{- t n^\epsilon }.
\]
It remains to compute $P(a):= \mathbf{P} \left(  S^{(\kappa)}_{j} < a \text{ for all } j \geq 1 \right)  $. 
Recall that $\tau^{(\kappa),+}:=\inf\{ k\ge 1: S_k^{(\kappa)} \ge 0 \}$. Observe that for all $a>0$,
\[
P(a) = P(0) +  \int_{[0, a)} P(a - r ) \P( S_{\tau^{(\kappa),+}} \in \d r).
\]
where $P(0) : = \P( \tau^{(\kappa),+} = \infty ) = p(\kappa)$. So the renewal theory shows that 
\[
P( a ) = P( 0 ) U^{(\kappa),+}_w[0,a).
\]
We thus complete the proof. 

\textbf{Proof of \eqref{RWeSbd}.}
Let us consider 
\begin{equation}
R(x,a):=\sum_{j\geq0}\P_{-a}\left(\MS^{(\kappa)}_{[1,j]}<0, S^{(\kappa)}_j> -x\right), \forall a,x \geq0.
\end{equation}
Apparently, when $a=0$, 
\begin{equation}\label{Ra0}
R(x,0)=U_s^{(\kappa),-}[0,x).
\end{equation}
Note that $(S^{(\kappa)}_i)_{1\leq i\leq j}$ has the same distribution as $(S^{(\kappa)}_j-S^{(\kappa)}_{j-i})_{1\leq i\leq j}$. As a consequence, for any $a>0$,
\begin{align*}
R(x,a)=&\sum_{j\geq0}\P(\MS^{(\kappa)}_{[1,j]}<a, S^{(\kappa)}_j> a-x)\\
=&\sum_{j\geq0}\P\left(S^{(\kappa)}_j< a+\mS^{(\kappa)}_{[0,j-1]}, S^{(\kappa)}_j> a-x\right)
\end{align*}
Let $( \hat{ \tau }_n^{ (\kappa), -}, \hat{H}_n^{ (\kappa ), -})_{n\ge0}$ be the strict descending ladder process. Then
\[
U_s^{ (\kappa), -} (\d x) = \E\left[ \sum_{n\ge 0 } \ind{ \hat{H}_n^{(\kappa),-1} \in \d x}\right].
\]
In addition, the  renewal measure associated with the weak ascending ladder process is 
\[
U_w^{(\kappa), + }( \d x):=\E\left[ \sum_{k=0}^{ \hat{ \tau }_1^{ (\kappa), -}- 1} \ind{ S_k^{(\kappa)} \in \d x} \right].
\]
As the descending ladder process is proper, this implies that
\begin{align*}
R(x,a)=&\sum_{n\geq0}\E\left[\sum_{j=\hat{\tau}_n^{(\kappa),-}}^{\hat{\tau}_{n+1}^{(\kappa),-}-1}\ind{S^{(\kappa)}_j< a+\mS^{(\kappa)}_{[0,j-1]}, S^{(\kappa)}_j> a-x}\right]\\
=&\sum_{n\geq 0} \E\left[\ind{\hat{H}_n^{(\kappa),-}\geq -x}\sum_{j=\hat{\tau}_n^{(\kappa),-}}^{\hat{\tau}_{n+1}^{(\kappa),-}-1}\ind{a-x< S^{(\kappa)}_j<a +\hat{H}_n^{(\kappa),-}}\right],
\end{align*}
which by Markov property at time $\tau_n^{(\kappa),-}$ equals to
\begin{align*}
&\sum_{n\geq 0} \E\left[\ind{\hat{H}_n^{(\kappa),-}\geq-x}U_w^{(\kappa),+}\left((a-x-\hat{H}_n^{(\kappa),-}, a)\right)\right]\\
=&\sum_{n\geq 0} \E\left[\ind{\hat{H}_n^{(\kappa),-}\geq -x}U_w^{(\kappa),+}\left([0, a)\right)\right]-\sum_{n\geq 0} \E\left[\ind{\hat{H}_n^{(\kappa),-}\geq-x}U_w^{(\kappa),+}\left([0, a-x-\hat{H}_n^{(\kappa),-}]\right)\right]
\end{align*}
This means that for any $a>0$ and $x\geq0$, 
\begin{equation}
R(x,a)= U_w^{(\kappa),+}\left( [0, a)\right)R_s^{(\kappa),-}(x) -\int_{[(x-a)_+,x]} U_w^{(\kappa),+}\left([0, a-x+u]\right) U_s^{(\kappa),-}(du).
\end{equation}
As $U_w^{(\kappa),+}\left(\R\right)=C_w^{(\kappa),+}\in(0,\infty) $, one sees that there exists some constant $c_0>0$ such that for any $x\geq0$,
\begin{equation}\label{upperbd}
\sum_{k\geq0}\P_{-a}\left[\MS^{(\kappa)}_{[1,k]}<0, S^{(\kappa)}_k\in I(x)\right]=R(x+1,a)-R(x,a)\leq c_0 (1+a), \forall a>0.
\end{equation}
Note that it also holds for $a=0$ by \eqref{Ra0}. This suffices to conclude \eqref{RWeSbd}.
\end{proof}

\subsection{Convergence of $D_n$: proof of Proposition \ref{Dcvg}.}

For any $p\in(1,\kappa)$, the $L^p$ boundedness of the additive martingale $W_n$ has been proved in \cite[Theorem 2.1]{Liu00}. Although our argument here is not simpler than the one in \cite[Theorem 2.1]{Liu00}, 
to be self-contained, we will use a unified approach to show both $W_n$ and $D_n$ are bounded in $L^p$. 

We will use the following Marcinkiewicz-Zygmund inequality  (see  \cite[Theorem 2 in Section 10.3]{CT03}). For independent centered random variables $\{X_i; 1\le i\le m\}$ and $p\in [1,\infty)$, we have
\begin{equation}\label{sumindep}
\E\left[ \left|\sum_{i=1}^m X_i\right|^p\right]\leq  C_{\eqref{sumindep}}(p) \, \E\left[\left(\sum_{i=1}^m X_i^2\right)^{\frac{p}{2}}\right] , 
\end{equation}
where  $ C_{\eqref{sumindep}}(p) :=  2^p \left\lceil\frac{p}{2}\right\rceil^{p / 2} $.

\begin{proof}[Proof of Proposition \ref{Dcvg}]
Recall that $W_1=\sum_{|u|=1}e^{-V(u)}$ and $D_1=-\sum_{|u|=1}(V(u)+\psi'(1))e^{-V(u)}$ and that
\[
\E[W_1]=1, \textrm{ and } \E[D_1]=0.
\]
Then \eqref{hyp1+} implies that 
\begin{equation}\label{bdmomWD}
\E[|W_1-1|^\alpha]<\infty, \textrm{ and } \E[|D_1|^\alpha]<\infty
\end{equation}
for any $\alpha\in (0,\kappa+\delta_0)$.

Recall that for any $u\in\T$,  $c(u)$ denotes the set of children of $u$ and $\overleftarrow{u}$ denotes the parent of $u$  when $u\neq \rho$. The displacement of $u$ is
\(
\Delta V(u)=V(u)-V(\overleftarrow{u}).
\)
Then for any $u\in\T$, we see that under $\P$, the couple
\[
D_1^{(u)}:=\sum_{z\in c(u)}(-\Delta V(z)-\psi'(1))e^{-\Delta V(z)}\textrm{ and } W_1^{(u)}:=\sum_{z\in c(u)}e^{-\Delta V(z)}
\]
has the same law as $(D_1,W_1)$.

  Observe that   the martingale difference of $W_n$ has the form 
\begin{equation*}
W_{n+1}-W_n= \sum_{|u|=n}e^{-V(u)}\left[W_1^{(u)} -1\right], \forall n\ge0,
\end{equation*}
where, conditioned on $\mathcal{F}_n$, for $|u|=n$, $W_1^{(u)} -1$ are independent centered random variables. Similarly, notice that
for the  martingale difference of $D_n$, we have
\begin{align*}
D_{n+1}-D_n&=\sum_{|u|=n}\sum_{z\in c(u)}(-V(u)-\psi'(1)n-\Delta V(z)-\psi'(1))e^{-V(u)-\Delta V(z)}-D_n\\
&=\sum_{|u|=n}e^{-V(u)}\left[D_1^{(u)}+(-V(u)-\psi'(1)n)[W_1^{(u)}-1]\right], \forall n\ge0,
\end{align*}
where, conditioned on $\mathcal{F}_n$, for $|u|=n$, $D_1^{(u)}+(-V(u)-\psi'(1)n)[W_1^{(u)}-1] $
are independent centered random variables. Applying Marcinkiewicz-Zygmund inequality \eqref{sumindep}, we get that for any $p\in(1, \kappa)$,
\begin{equation} \label{W-MDbd}
  \E\left[|W_{n+1}-W_n|^p \mid \mathcal{F}_n\right] 
  \lesssim_p  \E\left[\left(\sum_{|u|=n} e^{-2V(u)}\left[W_1^{(u)}-1\right]^2\right)^{p/2} \mid \nf_n\right] ,
  \end{equation}
  and 
\begin{align}
\E\left[|D_{n+1}-D_n|^p \mid \mathcal{F}_n\right]  \lesssim_p\E\left[\left(\sum_{|u|=n} e^{-2V(u)}\left(D_1^{(u)}+(-V(u)-\psi'(1)n)[W_1^{(u)}-1]\right)^2\right)^{p/2} \mid \nf_n\right] \label{D-MDbd} .
\end{align}

\textit{Proof of the $L^p$ boundedness of $W_n$.} When $p \leq 2$ and $p \in (1,\kappa)$, by the fact that $(\sum_i x_i)^{p/2}\le \sum_i x_i^{p/2}$ for any $x_i \geq 0$  and by \eqref{bdmomWD},  \eqref{W-MDbd} becomes 
\begin{equation}\label{W-MDbd1}
 \E\left[|W_{n+1}-W_n|^p \mid \mathcal{F}_n\right] 
\lesssim_p  \E\left[\sum_{|u|=n}e^{-pV(u)} |W_1^{(u)}-1|^p   \mid \nf_n\right] 
\lesssim_p  \sum_{|u|=n}e^{-pV(u)}  .
\end{equation}
Taking expectation we get that $
 \E[|W_{n+1}-W_n|^p] \lesssim_{p}    e^{n\psi(p)} $. As $\psi(p)<0$ for $p \in (1,\kappa)$, it is clear that $W_n$ is a $L^p$-bounded martingale. If $\kappa \leq 2$ we have completed the proof. 
 
 In the case $\kappa >2$, fix any $p \in (2,\kappa)$. Let $\delta \in (0, 1- \frac{p-1}{\kappa-1})$ be a small constant such that $p= 2+K\delta$ for some positive integer $K$. Let $p_k:= 2+  k \delta$ for $0 \leq k \leq K$. It is sufficient to show that  $\sup_{n} \mathbf{E} |W_n|^{p_{k}}<\infty$ implies that $\sup_{n} \mathbf{E} |W_n|^{p_{k+1}} <\infty$ for all $ k\leq K-1$. By use of the fact that $(\sum_i a_i x_i)^{p/2}\le (\sum_i a_i)^{p/2-1}\sum_i a_i x_i^{p/2}$ for any $a_i, x_i\ge0$ and by \eqref{bdmomWD} and \eqref{W-MDbd}, we have 
 \begin{align}\label{W-MDbd2}
 \E\left[|W_{n+1}-W_n|^{p_{k+1}} \mid \mathcal{F}_n\right]  
 &\lesssim_p    \left(\sum_{|u|=n}e^{-2V(u)}\right)^{p_{k+1}/2-1}\sum_{|u|=n} e^{-2V(u)}\E\left[ \left| W_1^{(u)}-1 \right|^{p_{k+1}} \mid \nf_n\right]\nonumber\\
  & \lesssim_p\left(\sum_{|u|=n}e^{-V(u)}\right)^{p_{k+1}-2}\sum_{|u|=n} e^{-2V(u)} 
 \end{align}
 Taking expectation and by Lyons' change of measure, one sees that 
 \begin{align}\label{W-DifBound-k}
 \E\left[|W_{n+1}-W_n|^{p_{k+1}}\right] & \lesssim_p  \E_{\Q^{1,*}}\left[W_n^{p_{k+1}-2} e^{-V(w_n)} \right] \nonumber \\
 &
  \lesssim_p  \E_{\Q^{1,*}}\left[W_n^{p_{k}-1}\right]^{\frac{p_{k+1}-2}{p_{k}-1}}\E_{\Q^{1,*}}\left[e^{- \frac{p_{k}-1}{1-\delta} V(w_n)} \right]^{ \frac{1-\delta}{p_{k}-1}}.
 \end{align}
 where the last line comes from H\"{o}lder's inequality and the fact that $\frac{p_{k+1}-2}{p_{k}-1}+ \frac{1-\delta}{p_{k}-1} = 1$, $ \frac {p_{k}-1}{1-\delta} >1 $. 
 Notice that  on the one hand  as $W_n$ is $L^{p_k}$ bounded,
 \begin{equation}\label{eq-W-pk-bounded}
 \E_{\Q^{1,*}}\left[W_n^{p_k-1}\right]= \E\left[ W_n^{p_{k}}\right] \le \sup_{n\ge0}\E[W_n^{p_{k}}]<\infty.
 \end{equation}
 On the other hand, observe that $ \frac {p_{k}-1}{1-\delta} < \frac{p-1}{1-\delta} < \kappa-1$   and compute that 
 \begin{equation}\label{W-value-spine}
  \E_{\Q^{1,*}}\left[e^{- \frac{p_{k}-1}{1-\delta} V(w_n)} \right] = \mathbf{E} \left[ \sum_{|u|=n}  e^{- (1+\frac{p_{k}-1}{1-\delta} )V(u)} \right] = e^{n \psi (1+\frac{p_{k}-1}{1-\delta} )}
 \end{equation}
 and $ \psi (1+\frac{p_{k}-1}{1-\delta} )<0$. It follows from \eqref{W-DifBound-k}, \eqref{eq-W-pk-bounded} and \eqref{W-value-spine} that  $\sup_{n} \mathbf{E} |W_n|^{p_{k+1}} <\infty$.

\textit{Proof of the $L^p$ boundedness of $D_n$.}
When $p\le 2$, by the fact that $(\sum_i x_i)^{p/2}\le \sum_i x_i^{p/2}$ for any $x_i \geq 0$  and by \eqref{bdmomWD},  \eqref{D-MDbd} becomes 
\begin{align}\label{D-MDbd1}
&\E\left[|D_{n+1}-D_n|^p \mid \mathcal{F}_n\right]\nonumber\\
&\lesssim_p \E\left[\sum_{|u|=n}e^{-pV(u)}\left(|D_1^{(u)}|^p +|V(u)+\psi'(1)n|^p |W_1^{(u)}-1|^p \right) \mid \nf_n\right]\nonumber\\
&\lesssim_p \sum_{|u|=n}e^{-pV(u)} (1+|V(u)|^p+n^p).
\end{align}
Note that for any sufficiently small $\varepsilon>0$ such that $1<p\pm\varepsilon<\kappa$, we have
\[
e^{-pV(u)} (1+|V(u)|^p+n^p) \lesssim_p e^{-(p+\varepsilon)V(u)}+e^{-(p-\varepsilon)V(u)}+(1+n^p)  e^{-p V(u)}.
\]
Using it in \eqref{D-MDbd1} and taking expectation yield that 
\begin{align*}
&\E[|D_{n+1}-D_n|^p]\\
&\lesssim_{p}    (1+n^p)\E\left[\sum_{|u|=n} e^{-p V(u)}\right]+\E\left[\sum_{|u|=n} e^{-(p+\varepsilon) V(u)}\right]+\E\left[\sum_{|u|=n} e^{-(p-\varepsilon) V(u)}\right]\\
& =   (1+n^p) e^{n\psi(p)}+e^{n\psi(p+\varepsilon)}+e^{n\psi(p-\varepsilon)}.
\end{align*}
As $\psi(p)<0$ and $\psi(p\pm\varepsilon)<0$, we get that
\[
\sup_{n\ge0}\| D_n\|_p\le \sum_{n\ge0}\E[|D_{n+1}-D_n|^p]^{1/p}<\infty.
\]
This shows that $D_n$ is $L^p$-bounded martingale for any $p\in(1,\kappa\wedge 2)$.

When $p\ge 2$, by use of the fact that $(\sum_i a_i x_i)^{p/2}\le (\sum_i a_i)^{p/2-1}\sum_i a_i x_i^{p/2}$ for any $a_i, x_i\ge0$ and by \eqref{bdmomWD}, \eqref{D-MDbd} becomes 
\begin{align}\label{D-MDbd2}
&\E\left[|D_{n+1}-D_n|^p \mid \mathcal{F}_n\right]\nonumber\\
&\lesssim_p  \left(\sum_{|u|=n}e^{-2V(u)}\right)^{p/2-1}\sum_{|u|=n} e^{-2V(u)}\E\left[ \left|D_1^{(u)}+(-V(u)-\psi'(1)n)[W_1^{(u)}-1] \right|^{p} \mid \nf_n\right]\nonumber\\
&\lesssim_p  \left(\sum_{|u|=n}e^{-V(u)}\right)^{p-2}\sum_{|u|=n} e^{-2V(u)}(1+n^p + |V(u)|^p).
\end{align}
Taking expectation and by Lyons' change of measure, one sees that
\begin{align}
&\E\left[|D_{n+1}-D_n|^p\right]\lesssim_p  \E_{\Q^{1,*}}\left[W_n^{p-2} e^{-V(w_n)}(1+n^p+|V(w_n)|^p)\right]\nonumber\\
&\lesssim_p  \E_{\Q^{1,*}}\left[W_n^{p-1}\right]^{\frac{p-2}{p-1}}\E_{\Q^{1,*}}\left[e^{-(p-1)V(w_n)}(1+n^{p(p-1)}+|V(w_n)|^{p(p-1)})\right]^{\frac{1}{p-1}}.
\end{align}
where the last line comes from H\"{o}lder's inequality. On the one hand, as $W_n$ is $L^p$ bounded,
\begin{equation*}
\E_{\Q^{1,*}}\left[W_n^{p-1}\right]= \E\left[ W_n^p\right] \le \sup_{n\ge0}\E[W_n^p]<\infty.
\end{equation*}
On the other hand, for arbitrary small $\epsilon>0$ such that $p\pm\epsilon\in(1,\kappa)$,
\begin{align*}
&\E_{\Q^{1,*}}\left[e^{-(p-1)V(w_n)}(1+n^{p(p-1)}+|V(w_n)|^{p(p-1)})\right]\\
&=\E\left[ \sum_{|u|=n}e^{-pV(u)}(1+n^{p(p-1)}+|V(u)|^{p(p-1)})\right]
\lesssim_p (1+n^p) e^{n\psi(p)}+e^{n\psi(p+\varepsilon)}+e^{n\psi(p-\varepsilon)}.
\end{align*}
Again, as $\psi(p)<0$ and $\psi(p\pm\varepsilon)<0$, we end up with
\[
\sum_n \E\left[|D_{n+1}-D_n|^p\right]^{1/p}\lesssim_p \sum_n (1+n^p)^{\frac{1}{p(p-1)}} e^{n\frac{\psi(p)}{p(p-1)}}+e^{n\frac{\psi(p+\varepsilon)}{p(p-1)}}+e^{n\frac{\psi(p-\varepsilon)}{p(p-1)}}<\infty.
\]
 This suffices to conclude Proposition \ref{Dcvg}.
\end{proof}

\subsection{Martingale inequalities}

  \subsubsection{Bahr-Esseen type inequality}

 Let $f:[0, \infty) \rightarrow[0, \infty)$ be a nondecreasing convex function with $f(0)=0$ such that
  \begin{equation}\label{eq-Bahr-Esseen-condition}
    f(|z+w|)+f(|z-w|) \leq 2(f(|z|)+f(|w|)) \quad \text { for all } z, w 
  \end{equation} 
  For example if $g(x):=f(\sqrt{x})$ is concave on $(0, \infty)$, then \eqref{eq-Bahr-Esseen-condition} holds.

\begin{lem}\label{lem-Bahr-Esseen}
  Let $\left(M_n,\mathcal{F}_{n} \right)_{n \geq 0}$ be a  martingale with $M_0=0$ a.s. and set $\zeta_n:=M_{n}-M_{n-1}$ for $n \geq 1$. Let $(A_{n})_{n \geq 0}$ be a sequence of event such that 
  \begin{equation*}
    A_{n} \in \mathcal{F}_{n} \text{ and }  A_{n} \subset A_{n-1} \text{ for all }  n\geq 1. 
  \end{equation*}
  Then 
\begin{equation*}
\mathbf{E}\left[f\left(\left| M_n\right|\right) 1_{A_{n}}\right] \leq 4 \sum_{k=1}^n \mathbf{E}\left[f\left(\left|\zeta_k\right|\right)1_{A_{k-1}} \right] \text{ for all } n \geq 1.
\end{equation*}
\end{lem}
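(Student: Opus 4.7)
The plan is to reduce the claim to the standard indicator-free Bahr--Esseen inequality for martingales by absorbing the nested events into the martingale differences. First I would define $\hat{\zeta}_k := \zeta_k \mathbf{1}_{A_{k-1}}$ for $k \geq 1$, and $\hat{M}_n := \sum_{k=1}^n \hat{\zeta}_k$. Since $\mathbf{1}_{A_{k-1}} \in \mathcal{F}_{k-1}$ and $(\zeta_k)$ are martingale differences, so are $(\hat{\zeta}_k)$, hence $\hat{M}_n$ is an $\mathcal{F}_n$-martingale starting from $0$. The key observation is that because $A_n \subset A_{n-1} \subset \cdots \subset A_0$, on the event $A_n$ one has $\mathbf{1}_{A_{k-1}} = 1$ for every $k = 1,\dots,n$, so $\hat{M}_n = M_n$ on $A_n$. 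Since $f \geq 0$ this yields $\mathbf{E}[f(|M_n|)\mathbf{1}_{A_n}] = \mathbf{E}[f(|\hat{M}_n|)\mathbf{1}_{A_n}] \leq \mathbf{E}[f(|\hat{M}_n|)]$; and because $f(0) = 0$ we have $f(|\hat{\zeta}_k|) = f(|\zeta_k|)\mathbf{1}_{A_{k-1}}$. Thus the lemma reduces to proving the indicator-free bound $\mathbf{E}[f(|\hat{M}_n|)] \leq 4 \sum_{k=1}^n \mathbf{E}[f(|\hat{\zeta}_k|)]$ for a general martingale $\hat M$.

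For this indicator-free inequality I would use a symmetrization argument. Let $(\hat{M}'_n, \mathcal{F}'_n)$ be an independent copy of $(\hat{M}_n, \mathcal{F}_n)$ on an enlarged probability space, set $\tilde{M}_n := \hat{M}_n - \hat{M}'_n$, $\tilde{\zeta}_k := \hat{\zeta}_k - \hat{\zeta}'_k$, and let $\mathcal{G}_n := \mathcal{F}_n \vee \mathcal{F}'_n$. Since $x \mapsto f(|x|)$ is convex (as $f$ is convex and nondecreasing) and $\mathbf{E}\hat{M}'_n = 0$, Jensen's inequality applied conditionally on $\mathcal{F}_n$ gives $\mathbf{E}[f(|\hat{M}_n|)] \leq \mathbf{E}[f(|\tilde{M}_n|)]$.

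Conditionally on $\mathcal{G}_{k-1}$, the increment $\tilde{\zeta}_k$ is symmetric, so $\mathbf{E}[f(|\tilde{M}_{k-1} + \tilde{\zeta}_k|)\mid \mathcal{G}_{k-1}] = \mathbf{E}[f(|\tilde{M}_{k-1} - \tilde{\zeta}_k|)\mid \mathcal{G}_{k-1}]$. Combining this with hypothesis \eqref{eq-Bahr-Esseen-condition} applied to $z = \tilde{M}_{k-1}$, $w = \tilde{\zeta}_k$ yields
\[
\mathbf{E}[f(|\tilde{M}_k|) \mid \mathcal{G}_{k-1}] \leq f(|\tilde{M}_{k-1}|) + \mathbf{E}[f(|\tilde{\zeta}_k|) \mid \mathcal{G}_{k-1}],
\]
and iterating gives $\mathbf{E}[f(|\tilde{M}_n|)] \leq \sum_{k=1}^n \mathbf{E}[f(|\tilde{\zeta}_k|)]$. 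Finally, applying \eqref{eq-Bahr-Esseen-condition} once more with $z = \hat{\zeta}_k$, $w = \hat{\zeta}'_k$ shows $f(|\tilde{\zeta}_k|) \leq 2f(|\hat{\zeta}_k|) + 2f(|\hat{\zeta}'_k|)$, so taking expectations yields $\mathbf{E}[f(|\tilde{\zeta}_k|)] \leq 4\mathbf{E}[f(|\hat{\zeta}_k|)]$. Chaining the four inequalities produces the stated constant $4$. The only delicate point is verifying that hypothesis \eqref{eq-Bahr-Esseen-condition} is precisely what is needed both to close the induction step (via the conditional symmetry of $\tilde\zeta_k$) and to control the symmetrized increments in terms of the original ones; beyond this there is no real obstacle, as the symmetrization, Jensen, and truncation steps are all standard.
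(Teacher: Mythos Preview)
Your reduction step---absorbing the nested indicators into the martingale by setting $\hat\zeta_k=\zeta_k\mathbf 1_{A_{k-1}}$---is clean and correct, and it is a slightly different (and pleasant) way of handling the events compared with the paper, which carries the indicators through the recursion directly.

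The gap is in your symmetrization. When you take an independent copy $(\hat M'_n,\mathcal F'_n)$ of the \emph{whole} martingale and set $\tilde\zeta_k=\hat\zeta_k-\hat\zeta'_k$, the increment $\tilde\zeta_k$ is \emph{not} conditionally symmetric given $\mathcal G_{k-1}=\mathcal F_{k-1}\vee\mathcal F'_{k-1}$. Conditionally on $\mathcal G_{k-1}$, the variables $\hat\zeta_k$ and $\hat\zeta'_k$ are independent, but their conditional laws are governed by $\mathcal F_{k-1}$ and $\mathcal F'_{k-1}$ respectively, and these need not coincide. For a concrete counterexample, let $\zeta_1$ take values $2,-1$ with probabilities $\tfrac13,\tfrac23$, let $\zeta_2$ be an independent copy of $\zeta_1$, and set $A_1=\{\zeta_1=2\}$, so $\hat\zeta_2=\zeta_2\mathbf 1_{\{\zeta_1=2\}}$. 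On $\{\zeta_1=2,\zeta'_1=-1\}\in\mathcal G_1$ one has $\hat\zeta'_2=0$ and $\tilde\zeta_2=\zeta_2$, whose conditional law ($2$ w.p.\ $\tfrac13$, $-1$ w.p.\ $\tfrac23$) is not symmetric. Hence your induction step $\mathbf E[f(|\tilde M_k|)\mid\mathcal G_{k-1}]\le f(|\tilde M_{k-1}|)+\mathbf E[f(|\tilde\zeta_k|)\mid\mathcal G_{k-1}]$ does not follow.

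The fix is to symmetrize one step at a time, exactly as the paper does: for each $n$ introduce $\zeta_n^{*}$ with the \emph{same conditional law} as $\zeta_n$ given $\mathcal F_{n-1}$ and conditionally independent of $\zeta_n$. Then $\zeta_n-\zeta_n^{*}$ is genuinely conditionally symmetric given $\mathcal F_{n-1}$, and Jensen plus \eqref{eq-Bahr-Esseen-condition} give the one-step recursion
\[
\mathbf E\big[f(|\hat M_n|)\big]\le \mathbf E\big[f(|\hat M_{n-1}|)\big]+4\,\mathbf E\big[f(|\hat\zeta_n|)\big],
\]
which you can then iterate. With this correction your reduction-then-recursion argument goes through and matches the paper's constant $4$.
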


\begin{proof}
 The proof closely follows the proof of Lemma A.1 in \cite{IKM20}.  Without loss of generality assume that $\mathbf{E}\left[f\left(\left|\zeta_n\right|\right)\right]< \infty$ for all $n\geq 1$.

Denote by $\zeta_{n}^*$ a random variable such that $\zeta_{n}$ and $\zeta_{n}^*$ are i.i.d. conditionally given $\mathcal{F}_{n-1}$. Then 
\begin{equation*} 
\left|M_{n-1}+\zeta_{n}\right|   =\left|\mathbf{E}\left[M_{n-1}+\zeta_{n}-\zeta_{n}^* \mid M_{n-1}, \zeta_{n}\right]\right|  \leq \mathbf{E}\left[\left|M_{n-1}+\zeta_{n}-\zeta_{n}^*\right| \mid M_{n-1}, \zeta_{n}\right].
\end{equation*}
By using that $f$ is increasing and  Jensen's inequality for conditional expectation  
\begin{align*}
\E[ f(|M_{n}|)1_{A_{n}} ] &= \mathbf{E}\left[f\left(\left|M_{n-1}+\zeta_{n}\right|\right)1_{A_{n}} \right] 
\leq  \mathbf{E}\left[f\left(\left|M_{n-1}+\zeta_{n}\right|\right)1_{A_{n-1}} \right]\\
&
\leq \mathbf{E}\left[ f\left(\mathbf{E}\left[\left|M_{n-1}+\zeta_{n}-\zeta_{n}^*\right| \mid M_{n-1}, \zeta_{n}\right]\right) 1_{A_{n-1}}  \right]   \leq \mathbf{E}\left[f\left(\left|M_{n-1}+\zeta_{n}-\zeta_{n}^*\right|\right) 1_{A_{n-1}}\right].
\end{align*}
Further, as   $\zeta_{n}$ and $\zeta_{n}^*$ are i.i.d. conditionally on $\mathcal{F}_{n-1}$.
\begin{equation*}
\begin{aligned}
& \mathbf{E}\left[f\left(\left|M_{n-1}+\zeta_{n}-\zeta_{n}^*\right|\right) 1_{A_{n-1}} \right] =\mathbf{E}\left[\mathbf{E}\left[f\left(\left|M_{n-1}+\zeta_{n}-\zeta_{n}^*\right|\right) \mid \mathcal{F}_{n-1}\right] 1_{A_{n-1}} \right] \\
& =\mathbf{E}\left[\mathbf{E}\left[f\left(\left|M_{n-1}-\left(\zeta_{n}-\zeta_{n}^*\right)\right|\right) \mid \mathcal{F}_{n-1}\right] 1_{A_{n-1}} \right] 
  =\mathbf{E}\left[ f\left[\left|M_{n-1}-\left(\zeta_{n}-\zeta_{n}^*\right)\right|  1_{A_{n-1}} \right]\right]
\end{aligned}
\end{equation*}
Thus  an appeal to \eqref{eq-Bahr-Esseen-condition} thus yields
\begin{align*}
&\mathbf{E}\left[f\left(\left|M_{n-1}+\zeta_{n}-\zeta_{n}^*\right|\right) 1_{A_{n-1}}\right] \\ 
& = \frac{1}{2} \left( \mathbf{E}\left[f\left(\left|M_{n-1}+\zeta_{n}-\zeta_{n}^*\right|\right) 1_{A_{n-1}}\right] + \mathbf{E}\left[ f\left[\left|M_{n-1}-\left(\zeta_{n}-\zeta_{n}^*\right)\right|  1_{A_{n-1}} \right]\right]  \right) \\
& 
\leq \mathbf{E}\left[f\left(\left|M_{n-1}\right|\right)  1_{A_{n-1}} \right]+\mathbf{E}\left[f\left(\left|\zeta_{n}-\zeta_{n}^*\right|\right)  1_{A_{n-1}} \right]
\end{align*} 
Another application of \eqref{eq-Bahr-Esseen-condition} yields
\begin{equation*}
  \mathbf{E}\left[f\left(\left|\zeta_{n}-\zeta_{n}^*\right|\right) 1_{A_{n-1}} \right] \leq 4 \mathbf{E}\left[f\left(\left|\zeta_{n}\right|\right)  1_{A_{n-1}} \right].
\end{equation*}
Finally we get 
\begin{equation*}
  \E[ f(|M_{n}|)1_{A_{n}} ] \leq \mathbf{E}\left[f\left(\left|M_{n-1}\right|\right)  1_{A_{n-1}} \right]+  4 \mathbf{E}\left[f\left(\left|\zeta_{n}\right|\right)  1_{A_{n-1}} \right]. 
\end{equation*}
Hence the desired result follows. 
\end{proof}

\subsubsection{Rosenthal's inequality}

First we state a generalization of Rosenthal  inequality for nonnegative random variables. 

\begin{lem}[{\cite[Theorem 5.1]{Hitczenko90}}]\label{lem-Rosenthal-inequality}
   For any adapted sequence $\left(X_k,\mathcal{F}_k\right)_{k \geq 1}$ of nonnegative random variables the following inequality is true:
\begin{equation*}
   \mathbf{E}\left( \sum_{k\geq 1} X_k  \right)^p 
    \leq \left( K_0 p \right)^p 
    \left[ \mathbf{E}\left(   \sum_{k \geq 1} \mathbf{E}[ X_k | \mathcal{F}_{k-1}] \right)^p   + \E\left(   \sup_{k \geq 1}  X_{k}^p \right) \right], \quad p \geq 1,
\end{equation*} 
  for some absolute constant $K_0$.
\end{lem}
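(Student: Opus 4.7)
The plan is to use Doob's decomposition $X_k = Y_k + D_k$ with $Y_k := \mathbf{E}[X_k \mid \mathcal{F}_{k-1}]$ predictable and $D_k := X_k - Y_k$ a martingale difference. Setting $M_n := \sum_{k=1}^n D_k$, one has $\sum_k X_k = \sum_k Y_k + M_\infty$, so by the triangle inequality in $L^p$ it suffices to control $\|M_\infty\|_p$ in terms of $\|\sum_k Y_k\|_p$ and $\|\sup_k X_k\|_p$.

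Focusing first on $p \geq 2$, I would apply the Burkholder-Davis-Gundy inequality with sharp constant, giving $\|M_\infty\|_p \leq C_1 \sqrt{p}\, \|[M]_\infty^{1/2}\|_p$ with $[M]_\infty = \sum_k D_k^2$. Since $X_k, Y_k \geq 0$, the pointwise bound $|D_k| \leq X_k + Y_k$ yields $[M]_\infty \leq 2\sum_k X_k^2 + 2\sum_k Y_k^2$. The key elementary observation is that for nonnegative sequences, $\sum_k a_k^2 \leq (\sup_k a_k)(\sum_k a_k)$; combined with the Cauchy-Schwarz inequality, this gives
\[
\Big\|\sum_k X_k^2\Big\|_{p/2}^{1/2} \leq \|\sup_k X_k\|_p^{1/2}\, \Big\|\sum_k X_k\Big\|_p^{1/2},
\]
while the $Y$-analogue is trivial since $\sup_k Y_k \leq \sum_k Y_k$. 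One therefore arrives at an inequality of the form
\[
\Big\|\sum_k X_k\Big\|_p \leq C_2 \sqrt{p}\, \|\sup_k X_k\|_p^{1/2}\Big\|\sum_k X_k\Big\|_p^{1/2} + (1 + C_2 \sqrt{p})\Big\|\sum_k Y_k\Big\|_p.
\]
Young's inequality $ab \leq \tfrac12(\varepsilon a^2 + \varepsilon^{-1} b^2)$ applied with $\varepsilon \asymp 1/\sqrt{p}$ absorbs the $\|\sum_k X_k\|_p$ factor on the right, producing $\|\sum_k X_k\|_p \leq K_0 p\, (\|\sum_k Y_k\|_p + \|\sup_k X_k\|_p)$; raising this to the $p$-th power and using $(a+b)^p \leq 2^{p-1}(a^p + b^p)$ yields the claimed bound.

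The case $p \in [1,2)$ is handled by an analogous but simpler route via the von Bahr-Esseen inequality $\mathbf{E}|M_\infty|^p \leq 2\sum_k \mathbf{E}|D_k|^p$ together with $|D_k|^p \leq 2^p(X_k^p + Y_k^p)$ and $\sum_k X_k^p \leq (\sup_k X_k)^{p-1}\sum_k X_k$, followed by Hölder and the same absorption step. The main obstacle is tracking the $p$-dependence of all constants so that the final bound comes out as $(K_0 p)^p$ with an absolute $K_0$: the $\sqrt{p}$ from BDG and the $\sqrt{p}$ produced by absorption must combine exactly into a single $p$, and one has to verify that no hidden $p$-dependence slips in through the choice of $\varepsilon$ or through the sharp constants in BDG and von Bahr-Esseen.
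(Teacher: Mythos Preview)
The paper does not prove this lemma: it is quoted verbatim from \cite[Theorem~5.1]{Hitczenko90} and used as a black box (only in the derivation of Lemma~\ref{lem-p-moment}). So there is no ``paper's own proof'' to compare against.

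Your sketch is a standard and essentially correct route to the result. The Doob decomposition plus BDG with sharp constant $O(\sqrt p)$ for $p\ge 2$, followed by the pointwise inequality $\sum_k a_k^2\le(\sup_k a_k)(\sum_k a_k)$ and absorption via Young, does give $\|\sum X_k\|_p\le Cp\,(\|\sum Y_k\|_p+\|\sup_k X_k\|_p)$ with an absolute $C$, which is the heart of the matter. One small improvement: since $X_k,Y_k\ge 0$ you have $|D_k|\le\max(X_k,Y_k)$, hence $|D_k|^p\le X_k^p+Y_k^p$ without the extra $2^p$.

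The only genuine soft spot is the range $p\in[1,2)$. Your absorption argument there leads to an implicit inequality of the form $S\le T+c\,U^{(p-1)/p}S^{1/p}$, and solving this for $S$ produces a bound of order $c^{p/(p-1)}$, which blows up as $p\downarrow 1$. This is not fatal, because for $p\in[1,2]$ the target constant $(K_0p)^p$ is itself bounded (between $K_0$ and $(2K_0)^2$), so one only needs \emph{some} uniform constant on this range. The cleanest fix is to handle $p=1$ trivially (where $\E\sum X_k=\E\sum Y_k$), cover $p\in[3/2,2]$ by your argument, and treat $p\in(1,3/2]$ either by interpolation or by observing that $\E(\sum X_k)^p\le \E[(\sup_k X_k)^{p-1}\sum_k X_k]\le \|\sup_k X_k\|_p^{p-1}\|\sum_k X_k\|_p$ already gives $\|\sum_k X_k\|_p\le \|\sup_k X_k\|_p$ when combined with the bound through $Y_k$---but one must be a bit careful here. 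You correctly flagged the constant-tracking as the main obstacle; it is.
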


The following lemma is an easy consequence of the Rosenthal  inequality. 

\begin{lem}\label{lem-p-moment}
Let $(A_{n}), (B_{n})$ be two   nonnegative $\mathcal{F}_{n}$-adapted processes. Let $p>1$.  Assume that $B_{n}$ is independent of $\mathcal{F}_{n-1}$ and has the same law as $B_{1}$ for all $n$ with $ \E(B_1^p)<\infty$,
  then for $n \geq 2$
\begin{equation*}
  \E \left( \sum_{k=1}^{n-1} A_{k} B_{k+1} \right)^{p}  \leq (Kp)^p \E[B_1^p] \ \E \left( \sum_{k=1}^{n-1} A_{k}  \right)^{p}  
\end{equation*}
for some absolute constant $K$.
\end{lem}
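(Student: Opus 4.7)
My plan is to reduce to the Rosenthal-type inequality stated immediately above by applying it to the auxiliary sequence $X_k := A_k B_{k+1}$, $k\geq 1$, viewed as adapted to the \emph{shifted} filtration $\mathcal{G}_k := \mathcal{F}_{k+1}$. Since $A_k\in\mathcal{F}_k=\mathcal{G}_{k-1}$ and $B_{k+1}$ is independent of $\mathcal{F}_k$ with the law of $B_1$, the conditional means factor cleanly:
\[
\E[X_k \mid \mathcal{G}_{k-1}] \;=\; \E[A_k B_{k+1}\mid \mathcal{F}_k] \;=\; A_k\,\E[B_1].
\]

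The first term in the Rosenthal bound then reads
\[
\E\!\Bigl(\sum_{k=1}^{n-1}\E[X_k\mid \mathcal{G}_{k-1}]\Bigr)^{\!p}
\;=\;(\E B_1)^p\,\E\!\Bigl(\sum_{k=1}^{n-1} A_k\Bigr)^{\!p}
\;\leq\;\E[B_1^p]\,\E\!\Bigl(\sum_{k=1}^{n-1} A_k\Bigr)^{\!p},
\]
where the last inequality is Jensen. For the second term I will use the crude pointwise estimate
\[
\sup_{1\leq k\leq n-1} X_k^p \;\leq\; \sum_{k=1}^{n-1} A_k^p\, B_{k+1}^p,
\]
take expectation, factor $\E[A_k^p B_{k+1}^p]=\E[A_k^p]\,\E[B_1^p]$ using the independence of $B_{k+1}$ from $\mathcal{F}_k$, and then invoke the elementary $\ell^p$–$\ell^1$ inequality $\sum_k A_k^p \leq (\sum_k A_k)^p$ (valid for $A_k\geq 0$ and $p\geq 1$, e.g.\ by iterating $(x+y)^p\geq x^p+y^p$) to conclude
\[
\E\!\Bigl[\sup_{1\leq k\leq n-1} X_k^p\Bigr] \;\leq\; \E[B_1^p]\,\E\!\Bigl(\sum_{k=1}^{n-1} A_k\Bigr)^{\!p}.
\]

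Plugging these two estimates into the Rosenthal-type inequality yields
\[
\E\!\Bigl(\sum_{k=1}^{n-1} A_k B_{k+1}\Bigr)^{\!p}
\;\leq\; 2(K_0 p)^p\,\E[B_1^p]\,\E\!\Bigl(\sum_{k=1}^{n-1} A_k\Bigr)^{\!p},
\]
and absorbing the factor of $2$ into the absolute constant (e.g.\ $K:=2K_0$) gives the claimed bound. The argument is essentially bookkeeping; the only mild subtlety is to set up the shifted filtration $\mathcal{G}_k=\mathcal{F}_{k+1}$ correctly so that $X_k$ is $\mathcal{G}_k$-adapted while $B_{k+1}$ remains independent of $\mathcal{G}_{k-1}$, allowing both the conditional expectation step and the independence factorization in the $\sup$-bound to go through.
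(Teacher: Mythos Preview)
Your proof is correct and follows essentially the same route as the paper: apply the Rosenthal-type inequality to $X_k=A_kB_{k+1}$, use Jensen on the conditional-sum term, and bound the $\sup$ term by $\sum_k A_k^p B_{k+1}^p$, factor via independence, then use $\sum_k A_k^p\le(\sum_k A_k)^p$. The only cosmetic difference is that the paper shifts the index (writing $X_{k+1}=A_kB_{k+1}$ with the original filtration) rather than shifting the filtration as you do; both achieve the same thing.
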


\begin{proof}
Apply the Rosenthal  inequality above with $X_{k+1}= A_{k} B_{k+1}$. Since  $\mathbf{E}[ X_{k+1} | \mathcal{F}_{k}]= A_{k} \mathbf{E} (B_{1})$, we have 
\begin{equation*}
  \E \left( \sum_{k=1}^{n-1} A_{k} B_{k+1} \right)^{p}  \leq   \left( K_0 p \right)^p 
  \left[  (\E B_{1} )^p  \mathbf{E}\left(   \sum_{k =1}^{n-1}  A_k \right)^p   +  \E\left(  \sup_{1 \leq k \leq n-1}A^p_{k} B_{k+1}^p \right) \right] . 
\end{equation*} 
Observe that  $(\E B_{1} )^p \leq \E [B_{1} ^p] $ and  
\begin{equation*}
  \E\left(  \sup_{1 \leq k \leq n-1}A^p_{k} B_{k+1}^p \right) \leq \sum_{k=1}^{n-1}   \E\left(  A^p_{k} B_{k+1}^p \right) =  \E [B_{1}^p ] \  \sum_{k=1}^{n-1}   \E\left(  A_{k}^p \right) \leq  \E[B_1^p] \ \E \left( \sum_{k=1}^{n-1} A_{k}  \right)^{p}  
\end{equation*} 
the desired result follows. 
\end{proof}

\bibliographystyle{alpha}

\bibliography{biblio}

\end{document}